\def\ord{{\rm ord}}
\def\Gal{{\rm Gal}}
\def\Aut{{\rm Aut}}
\def\Gr{{\rm Gr}}
\def\11{{\mathbf 1}}
\def\AA{{\mathbb A}}
\def\CC{{\mathbb C}}
\def\FF{{\mathbb F}}
\def\PP{{\mathbb P}}
\def\QQ{{\mathbb Q}}
\def\RR{{\mathbb R}}
\def\ZZ{{\mathbb Z}}
\def\cO{{\mathcal O}}
\mathchardef\alphag="7C0B \mathchardef\betag="7C0C
\mathchardef\gammag="7C0D \mathchardef\deltag="7C0E
\mathchardef\varepsilong="7C22 \mathchardef\varphig="7C27
\mathchardef\psig="7C20 \mathchardef\zetag="7C10
\mathchardef\epsilong="7C0F \mathchardef\rhog="7C1A
\mathchardef\taug="7C1C \mathchardef\upsilong="7C1D
\mathchardef\iotag="7C13 \mathchardef\thetag="7C12
\mathchardef\pig="7C19 \mathchardef\sigmag="7C1B
\mathchardef\etag="7C11 \mathchardef\omegag="7C21
\mathchardef\kappag="7C14 \mathchardef\lambdag="7C15
\mathchardef\mug="7C16 \mathchardef\xig="7C18
\mathchardef\chig="7C1F \mathchardef\nug="7C17
\mathchardef\varthetag="7C23 \mathchardef\varpig="7C24
\mathchardef\varrhog="7C25 \mathchardef\varsigmag="7C26
\mathchardef\Omegag="7C0A \mathchardef\Thetag="7C02
\mathchardef\Sigmag="7C06 \mathchardef\Deltag="7C01
\mathchardef\Phig="7C08 \mathchardef\Gammag="7C00
\mathchardef\Psig="7C09 \mathchardef\Lambdag="7C03
\mathchardef\Xig="7C04 \mathchardef\Pig="7C05
\mathchardef\Upsilong="7C07
\newcounter{theoremcntr}[subsection]
\renewcommand*{\thetheoremcntr}{%
  \ifnum\value{subsection}=0 %
    \thesection
  \else
    \thesubsection
  \fi
  .\arabic{theoremcntr}%
}
\numberwithin{equation}{subsection}
\renewcommand*{\theequation}{%
  \ifnum\value{subsection}=0 %
    \thesection
  \else
    \thesubsection
  \fi
  .\arabic{equation}%
}
\providecommand{\corollaryname}{Corollary}
\providecommand{\definitionname}{Definition}
\providecommand{\notationname}{Notation}
\providecommand{\lemmaname}{Lemma}
\providecommand{\propositionname}{Proposition}
\providecommand{\remarkname}{Remark}
\providecommand{\theoremname}{Theorem}
\providecommand{\conjecturename}{Conjecture}
\providecommand{\examplename}{Example}
\numberwithin{equation}{section}
\numberwithin{figure}{section}
\theoremstyle{plain}
\newtheorem{thm}{\theoremname}[section]
\theoremstyle{plain} 
\newtheorem{maintheorem}[thm]{\protect\theoremname}
\theoremstyle{remark}
\newtheorem{remark}[thm]{\remarkname}
\theoremstyle{definition}
\newtheorem{defn}[thm]{\definitionname}
\theoremstyle{plain}
\newtheorem{notation}[thm]{\notationname}
\theoremstyle{plain}
\newtheorem{cor}[thm]{\corollaryname}
\theoremstyle{plain}
\newtheorem{prop}[thm]{\propositionname}
\theoremstyle{plain}
\newtheorem{lem}[thm]{\lemmaname}
\theoremstyle{plain}
\newtheorem{conj}[thm]{\conjecturename}
\theoremstyle{plain}
\newtheorem{example}[thm]{\examplename}
\theoremstyle{plain}
\crefname{remark}{Remark}{Remarks}
\Crefname{remark}{Remark}{Remarks}
\crefname{defn}{Definition}{Definitions}
\Crefname{defn}{Definition}{Definitions}
\crefname{notation}{Notation}{Notations}
\Crefname{notation}{Notation}{Notations}
\crefname{cor}{Corollary}{Corollaries}
\Crefname{cor}{Corollary}{Corollaries}
\crefname{prop}{Proposition}{Propositions}
\Crefname{prop}{Proposition}{Propositions}
\crefname{lem}{Lemma}{Lemmas}
\Crefname{lem}{Lemma}{Lemmas}
\crefname{conj}{Conjecture}{Conjectures}
\Crefname{conj}{Conjecture}{Conjectures}
\crefname{example}{Example}{Examples}
\Crefname{example}{Example}{Examples}
\crefname{thm}{Theorem}{Theorems}
\Crefname{thm}{Theorem}{Theorems}
\crefname{maintheorem}{Theorem}{Theorems}
\Crefname{maintheorem}{Theorem}{Theorems}
\def\boxit#1#2{\setbox1=\hbox{\kern#1{#2}\kern#1}%
\dimen1=\ht1 \advance\dimen1 by #1 \dimen2=\dp1 \advance\dimen2 by
#1
\setbox1=\hbox{\vrule height\dimen1 depth\dimen2\box1\vrule}%
\setbox1=\vbox{\hrule\box1\hrule}%
\advance\dimen1 by .4pt \ht1=\dimen1 \advance\dimen2 by .4pt
\dp1=\dimen2 \box1\relax}
\mathchardef\alphag="7C0B \mathchardef\betag="7C0C
\mathchardef\gammag="7C0D \mathchardef\deltag="7C0E
\mathchardef\varepsilong="7C22 \mathchardef\varphig="7C27
\mathchardef\psig="7C20 \mathchardef\zetag="7C10
\mathchardef\epsilong="7C0F \mathchardef\rhog="7C1A
\mathchardef\taug="7C1C \mathchardef\upsilong="7C1D
\mathchardef\iotag="7C13 \mathchardef\thetag="7C12
\mathchardef\pig="7C19 \mathchardef\sigmag="7C1B
\mathchardef\etag="7C11 \mathchardef\omegag="7C21
\mathchardef\kappag="7C14 \mathchardef\lambdag="7C15
\mathchardef\mug="7C16 \mathchardef\xig="7C18
\mathchardef\chig="7C1F \mathchardef\nug="7C17
\mathchardef\varthetag="7C23 \mathchardef\varpig="7C24
\mathchardef\varrhog="7C25 \mathchardef\varsigmag="7C26
\mathchardef\Omegag="7C0A \mathchardef\Thetag="7C02
\mathchardef\Sigmag="7C06 \mathchardef\Deltag="7C01
\mathchardef\Phig="7C08 \mathchardef\Gammag="7C00
\mathchardef\Psig="7C09 \mathchardef\Lambdag="7C03
\mathchardef\Xig="7C04 \mathchardef\Pig="7C05
\mathchardef\Upsilong="7C07
\def\e{\varepsilon}
\DeclareMathOperator*{\Kr}{{Kr}}
\DeclareMathOperator{\charac}{char}
\newcommand{\sing}{\mathrm{sing}}
\newcommand{\ns}{\mathrm{ns}}
\newcommand{\sepc}{\operatorname{sep}}
\newcommand{\redu}{\operatorname{red}}
\def\ord{{\rm ord}}
\definecolor{orange}{rgb}{1,0.5,0}
\definecolor{blue}{rgb}{0,0,1}
\title[Dimension growth and Hilbert's irreducibility]
{Improvements on dimension growth results and effective Hilbert's irreducibility theorem}
\author[Cluckers]{Raf Cluckers}
\address{Univ.~Lille, CNRS, UMR 8524 - Laboratoire Paul Painlev\'e, F-59000 Lille, France, and
KU Leuven, Department of Mathematics, B-3001 Leu\-ven, Bel\-gium}
\email{Raf.Cluckers@univ-lille.fr}
\urladdr{http://rcluckers.perso.math.cnrs.fr/}
\author[Dèbes]{Pierre Dèbes}
\address{Univ.~Lille, UMR 8524 - Laboratoire Paul Painlev\'e, F-59000 Lille, France}
\email{pierre.debes@univ-lille.fr}
\urladdr{https://pro.univ-lille.fr/pierre-debes}
\author[Hendel]{Yotam I.~Hendel}
\address{Department of Mathematics, Ben-Gurion University of the Negev, P.O.B. 653, Be’er Sheva 84105, Israel}
\email{yhendel@bgu.ac.il}
\urladdr{https://sites.google.com/view/yotam-hendel}
\author[Nguyen]{Kien Huu Nguyen}
\address{KU Leuven, Department of Mathematics,
B-3001 Leu\-ven, Bel\-gium, and, 
Institute of Mathematics, Vietnam Academy of Science and Technology, 18 Hoang Quoc Viet, Nghia Do, Hanoi, Vietnam}
\email{kien.nguyenhuu@kuleuven.be,nhkien@math.ac.vn}
\urladdr{https://sites.google.com/site/nguyenkienmath/home}
\author[Vermeulen]{Floris Vermeulen}
\address{Mathematics M\"unster, University of M\"unster, Germany}
\email{florisvermeulen.math@gmail.com}
\urladdr{https://sites.google.com/view/floris-vermeulen}
\subjclass[2020]{Primary 11D45, 14G05, 12E25; Secondary 11G35, 11G50, 11R09, 11C08}
\keywords{Bertini's theorem, dimension growth conjecture, determinant method, heights in global fields, Hilbert's irreducibility theorem, number of rational solutions of Diophantine equations, rational points of bounded height, specialization, varieties over global fields}
\begin{document}

\begin{abstract}
We sharpen and generalize the dimension growth bounds for the number of points of bounded height lying on an irreducible algebraic variety of degree $d$, over any global field. In particular, we focus on the affine hypersurface situation by relaxing the condition on the top degree homogeneous part of the polynomial describing the affine hypersurface, while sharpening the dependence on the degree in the bounds compared to previous results. We formulate a conjecture about plane curves which provides a conjectural approach to the uniform degree $3$ case (the only remaining open case).
For induction on dimension, we develop a higher dimensional effective version of Hilbert's irreducibility theorem, which is of independent interest.
\end{abstract}

\maketitle

\section{Introduction and main results}\label{sec:intro}

This paper improves the state of the art of the uniform dimension growth conjecture in the affine situation, with some consequences in the projective situation as well.

We explain our affine results from Section \ref{subsec:aff} on, but we first sketch the simpler projective situation. Recall the uniform dimension growth conjecture from Conjecture 2 of \cite{Heath-Brown-Ann}, following questions raised in \cite[p.~227]{Heath-Brown-cubic},\,\cite[p.\,178]{Serre-Mordell}, and \cite[p.~27]{Serre-Galois}. For a projective variety $X\subset \PP^n$, write
$$
N(X,B)
$$
for the number of points $x$ on $X(\QQ)$ of height at most $B$, namely, that can be written in homogeneous coordinates $(x_0: \ldots: x_n)$ for some integers $x_i$ with $|x_i|\leq B$ for $i=0,\ldots,n$.

\begin{conj}[Uniform dimension growth conjecture \cite{Heath-Brown-Ann}]\label{con:dgcdegree:non-unif}
Fix $\varepsilon>0$ and integers $d \ge 2$ and $n \ge 3$. Then there is a constant $c_{d,n,\varepsilon}$ such that for any integral hypersurface $X\subseteq \PP_\QQ^n$ of degree $d$, one has for all $B\geq 1$
\begin{equation}\label{eq:dgc:dim X:non-unif}
N(X,B) \leq c_{d,n,\varepsilon}  B^{\dim X+\varepsilon}.
\end{equation}
\end{conj}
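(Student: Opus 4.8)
The plan is to reduce Conjecture~\ref{con:dgcdegree:non-unif} to the affine counting results that form the core of this paper, and then to run an induction on dimension driven by the determinant method together with an effective Hilbert's irreducibility theorem. For the reduction, let $F\in\ZZ[x_0,\dots,x_n]$ be a degree-$d$ form cutting out $X$ (irreducible over $\QQ$ since $X$ is integral). Then $N(X,B)$ is, up to a factor of $2$, the number of primitive integer solutions of $F=0$ in the box $[-B,B]^{n+1}$, so it suffices to bound that quantity by $c_{d,n,\varepsilon}B^{\dim X+\varepsilon}=c_{d,n,\varepsilon}B^{n-1+\varepsilon}$. This is an affine dimension growth bound of exactly the type the paper is devoted to, the coprimality of the coordinates being precisely what keeps the lines through the origin in the affine cone over $X$ from obstructing the argument.

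For such an affine bound I would argue by induction on the ambient dimension. The base case is that of plane curves: an integral curve of degree $d$ has $\ll_{d,\varepsilon}B^{1/d+\varepsilon}$ integer points in a box of size $B$ by the determinant method of Bombieri--Pila and Heath-Brown, used here in its sharpened, polynomial-in-$d$ form. For the inductive step, apply the Heath-Brown--Salberger determinant method to the variety $Y$ in the box: its integer points lie on $\ll_{d,\varepsilon}B^{\theta+\varepsilon}$ auxiliary hypersurfaces $Z_j$ with $Y\not\subseteq Z_j$, each of degree bounded in terms of $d$ and the dimension, so that each $Y\cap Z_j$ has dimension one less and bounded degree and one recurses on it down to curves. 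The linear subspaces contained in $Y$ (finitely many of each dimension, or a low-dimensional family if $Y$ is ruled) are peeled off beforehand and handled by a direct fibration, again reducing to the base cases.

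The delicate point is that the successive slices $Y\cap Z_j\cap\cdots$ must remain geometrically integral of the expected dimension and degree for the inductive hypothesis to be applicable with the correct exponent --- a slice that degenerates into a linear piece or drops degree could spoil the count. This is exactly where the higher-dimensional effective Hilbert's irreducibility theorem developed in this paper is used: it shows that, outside a set of specialisations of controlled sub-maximal size --- of size $\ll B^{1-\delta}$ in the relevant parameter for some $\delta>0$ depending only on the degrees and dimensions --- the slices behave as required, while the exceptional specialisations are so few that estimating them trivially still costs a negligible amount. Feeding this into the determinant-method step makes the induction close.

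Assembling this according to $d$: for $d=2$ the conjecture is classical, a direct parametrisation of quadrics giving $N(X,B)\ll_{n} B^{n-1}$; for $d\ge 4$ the determinant-method exponent $\theta$ for the curve and surface slices that occur is small enough that the induction closes \emph{unconditionally}, and it is here that the paper's sharpening of the dependence on the degree enters, yielding the conjecture with $c_{d,n,\varepsilon}$ polynomial in $d$. The main obstacle is $d=3$: there the two unconditional inputs --- the determinant-method exponent for the plane-curve slices and the plane-curve bound $\ll B^{2/3+\varepsilon}$ --- are only borderline, so the bookkeeping that closes comfortably for $d\ge 4$ fails by an endpoint amount and no currently unconditional tool is known to absorb the loss. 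Conditionally on the plane-curve conjecture formulated in this paper, which pushes the count of points of bounded height on degree-$3$ plane curves just past this threshold, the same induction closes for $d=3$ too; unconditionally, the argument settles Conjecture~\ref{con:dgcdegree:non-unif} for every $d\neq 3$.
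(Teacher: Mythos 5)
The statement you were asked to prove is \Cref{con:dgcdegree:non-unif} itself, which the paper presents as an \emph{open conjecture} (due to Heath-Brown), not as a theorem: the paper proves only partial results towards it, namely \Cref{thm:dgcdegree:proj} for $d\ge 5$ (via the affine \Cref{thm:fd:Q}), recalls that the cases $d=2$, $n=3$, and $d\ge 4$ are known from earlier work, and offers a \emph{conditional} route to the remaining degree-$3$ case through \Cref{conj:uniform.curve.conjecture} and \Cref{thm:fd:Q:conj}. So there is no proof in the paper to compare against, and your proposal does not supply one either: the conjecture fixes an arbitrary $d\ge 2$, and by your own admission your argument closes unconditionally only for $d\ne 3$, with $d=3$ resting on the unproven plane-curve conjecture. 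Since $d=3$ is precisely the instance that is still open, what you have written is a (reasonable) survey of the known strategy and of the paper's programme, not a proof of the stated assertion; presenting it as a proof of \Cref{con:dgcdegree:non-unif} is a genuine gap that no amount of tightening the sketch can repair with current technology.

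Beyond that structural issue, several steps of the sketch would need substantial work even to recover the cases that \emph{are} known. The reduction to affine counting and the induction by slicing is indeed the paper's route, but the paper's induction has to preserve more than geometric integrality of the slices: it must keep track of the NCC condition and of the $r$-irreducibility of the top-degree form $f_d$ (\Cref{cor:preserve.H.cond.with.HIT.Q}, \Cref{thm:HIT.v2:Q}), and it must count the points lying on linear $(n-2)$-spaces of the hypersurface separately (\Cref{prop:count.on.n-2.planes}); your "peel off the linear subspaces beforehand" clause hides exactly the part where the uniform polynomial dependence on $d$ is won or lost. Also, the effective Hilbert irreducibility input (\Cref{thm:eff.Hil.lemma:Q}) gives an exceptional set of size $\ll (B')^{1/2}$ up to factors polynomial in $d$ and $\log\lVert F\rVert$, and in the paper it is used to choose a \emph{single} good specialisation of controlled height before summing over $\approx B$ parallel hyperplane sections, not to discard $\ll B^{1-\delta}$ bad fibres inside the main sum as you describe; the bookkeeping you allude to is therefore not the one that actually makes the induction close.
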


Heath-Brown proves the cases $d=2$ (for all $n$) and $n=3$ (for all $d \ge 2$) in Theorems 2 and 9 of \cite{Heath-Brown-Ann}. Furthermore, he introduces a $p$-adic variant of the determinant method, which has been strengthened and generalized in \cite{BrowningHeathBrown-Crelle}, \cite{Salb:d3}, \cite{Salberger-dgc}, \cite{Walsh}, \cite{CCDN-dgc}, \cite{Vermeulen:p}, \cite{Pared-Sas}, \cite{BCN-d}. In particular, Conjecture~\ref{con:dgcdegree:non-unif} is now known when $d\ge 4$ while removing the $\varepsilon$ and making the dependence on $d$ polynomial.
We improve these results further by showing the following.
\begin{thm}[Uniform dimension growth for projective hypersurfaces]\label{thm:dgcdegree:proj}
Fix $n \ge 3$. Then there is a constant $c=c(n)$ such that for any integral hypersurface $X\subseteq \PP_\QQ^n$ of degree $d\ge 5$, one has for all $B\geq 1$
\begin{equation}\label{eq:thm:dgc:dim X:non-unif}
N(X,B) \leq c d^7  B^{\dim X}.  
\end{equation}
\end{thm}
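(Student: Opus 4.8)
The plan is to reduce the projective statement to an affine one via the standard dévissage, and then apply the sharpened affine dimension growth bounds (together with the effective Hilbert irreducibility theorem) developed later in the paper. First I would cover the integral hypersurface $X \subseteq \PP^n_\QQ$ of degree $d$ by affine charts: intersecting with the $n+1$ standard affine opens gives affine hypersurfaces of degree $\le d$ in $\AA^n$, and a point of height $\le B$ on $X$ lands, after rescaling homogeneous coordinates so that the coordinates are coprime integers of absolute value $\le B$, in one of these charts with affine coordinates that are rationals of height controlled by $B$. The contribution of points lying in a hyperplane section (the "boundary" $X \cap \{x_i = 0\}$) is handled separately: this is a projective variety of dimension $\dim X - 1$ and degree $\le d$ sitting in $\PP^{n-1}$, so one induces on $n$ (the base case $n = 3$ being Heath-Brown's Theorem~9 of \cite{Heath-Brown-Ann}, or rather its polynomial-in-$d$ strengthening), picking up only a constant factor and a lower power of $B$, which is absorbed.

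Next, for each affine chart I would invoke the affine dimension growth results from Section~\ref{subsec:aff} (the affine analogue of Theorem~\ref{thm:dgcdegree:proj}), which by hypothesis apply once $d \ge 5$ and yield a bound of the shape $c(n)\, d^{?}\, B^{\dim X}$ for the number of points of bounded height on the affine hypersurface — here the relaxed condition on the top-degree homogeneous part is exactly what makes the affine statement robust enough to feed back the charts of a projective hypersurface without extra hypotheses. Summing over the finitely many ($n+1$) charts and adding the boundary contribution gives $N(X,B) \le c(n)\, d^{7}\, B^{\dim X}$, where the exponent $7$ on $d$ is whatever comes out of the affine bound (the induction on $n$ does not worsen the $d$-power since the boundary term carries a strictly smaller power of $B$ and can be bounded by a constant times $B^{\dim X}$ for $B \ge 1$).

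The main obstacle — and the real content hidden behind "apply the affine result" — is establishing the affine bound itself with the clean $d^7 B^{\dim X}$ dependence: this requires the $p$-adic determinant method à la Heath-Brown–Salberger–Walsh, carefully optimized in the choice of auxiliary prime and the degree of the auxiliary hypersurface to keep the dependence on $d$ polynomial of low degree, and crucially the higher-dimensional effective Hilbert irreducibility theorem to perform the induction on $\dim X$: one needs to intersect $X$ with hyperplanes (or use a fibration) so that a positive proportion of the slices remain geometrically irreducible of the same degree, with an \emph{effective} count of the exceptional slices in terms of $d$, $n$, and $B$. Controlling that exceptional locus quantitatively — rather than just asymptotically — is what allows the induction to close without losing powers of $B$ or introducing an $\varepsilon$, and it is the part where the bulk of the work lies; once it is in place, the projective statement above follows by the routine chart-and-boundary bookkeeping described above.
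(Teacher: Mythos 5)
There is a genuine gap in your reduction. You pass to the $n+1$ standard affine charts, but the affine results of this paper (\Cref{thm:fd:Q}) count \emph{integral} points: $N_{\rm aff}(f,B)$ is the number of points in $\ZZ^n$ with coordinates of absolute value at most $B$. A projective point of height $\le B$ lands in a chart with \emph{rational} coordinates $x_i/x_0$ of bounded height, not integral ones, so \Cref{thm:fd:Q} does not apply to the charts as you invoke it. The exponent bookkeeping exposes this: an affine hypersurface in $\AA^n$ has $\dim = n-1 = \dim X$, yet the affine bound is $c\,d^7 B^{n-2} = c\,d^7 B^{\dim X - 1}$, not $B^{\dim X}$ as you assert; to recover $B^{\dim X}$ from charts you would have to sum over the possible denominators $x_0$, i.e.\ over $\sim B$ affine slices $f(k,x_1,\dots,x_n)=0$, each of which would then need to satisfy the hypotheses of \Cref{thm:fd:Q} (irreducibility, NCC, $2$-irreducibility of the top part) uniformly in $k$ — none of which is automatic for dehomogenizations/slices of an integral projective hypersurface, contrary to your claim that the relaxed hypotheses make the charts work ``without extra hypotheses.'' The boundary-induction on $n$ is likewise unnecessary machinery.

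The paper's proof is a one-liner that sidesteps all of this: associate to $X$ its affine cone $X_{\rm aff}\subset \AA^{n+1}_\QQ$, cut out by the same homogeneous equation $f$. Every projective point of height $\le B$ has a representative in $\ZZ^{n+1}$ with all coordinates of absolute value $\le B$, so $N(X,B)\le N_{\rm aff}(X_{\rm aff},B)$, and one applies \Cref{thm:fd:Q} in $n+1$ variables, giving $c\,d^7 B^{(n+1)-2}=c\,d^7B^{\dim X}$. The point is that for the cone the polynomial is homogeneous, so $f=f_d$, and irreducibility over $\QQ$ in degree $d\ge 5$ makes the $2$-irreducibility hypothesis on the top-degree part automatic — exactly the verification your chart-based route cannot supply. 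Your closing paragraph correctly identifies that the substance lies in the affine bound and the effective Hilbert irreducibility input, but within this paper that is \Cref{thm:fd:Q}; the task here was only the projective-to-affine reduction, and the reduction you propose does not go through as written.
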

The new aspect of \Cref{thm:dgcdegree:proj} is the factor $d^7$, improving the factor $d^{e(n)}$ for some $e(n)$ depending on $n$ from Theorem 1 of \cite{CCDN-dgc}.
Apart from the $p$-adic determinant method from \cite{Heath-Brown-Ann}, \cite{Salberger-dgc}, our methods involve new ingredients: a combination of Bertini's Theorem and an effective Hilbert's irreducibility theorem in higher dimension (see \Cref{sec:hilb,sec:eff.hilbert}), and, counting separately on linear subspaces of maximal dimension (see \Cref{sec:linear:sub}). We also obtain a variant of \Cref{thm:dgcdegree:proj} for any global field $K$ instead of $\QQ$, including positive characteristic, see \Cref{thm:dgcdegree:proj:K}.

By the work \cite{BrowningHeathBrown-Crelle}, one shows results like \Cref{thm:dgcdegree:proj} by showing underlying  affine counting results, namely, for affine hypersurfaces.  In this paper most of the work and focus is on this underlying affine situation, and, we present several improvements and generalizations for this affine situation, more than needed for obtaining \Cref{thm:dgcdegree:proj}. Let us now explain our affine results.




\subsection{Improvements to affine dimension growth}\label{sec:1.1}
\label{subsec:aff}
If $X\subset \AA^n_{\QQ}$ is an affine variety, define
\[
N_{\rm aff}(X, B)
\]
as the number of points $x = (x_1, \ldots, x_n)$ lying on $X(\ZZ)$ and satisfying $|x_i|\leq B$ for $i=1, \ldots, n$. Also write $N_{\rm aff}(f, B)$ for $N_{\rm aff}(X, B)$ when $X$ is a hypersurface given by a polynomial $f$.




In previous works on affine dimension growth for a degree $d\geq 2$ hypersurface defined by $f=0$, it was  typically assumed that the degree $d$ part $f_d$ of $f$ is absolutely irreducible, see \cite{Brow-Heath-Salb,Salb:d3,Salberger-dgc,CCDN-dgc,Pared-Sas}. We relax the conditions on $f_d$, see \Cref{thm:fd:Q,thm:fd:Q:conj} below.


Our first variant of affine dimension growth uses a condition on $f$ which we coin NCC, for `not cylindrical over a curve'. This 
condition means that $f$ depends non-trivially on at least three variables in any  affine coordinate system over our base field, which is $\QQ$ up to \Cref{sec:global}.

\begin{defn}\label{defn:H-cond}
Let $f$ be a polynomial over $\QQ$ in $n\ge 3$ variables and $X=V(f)$ the affine hypersurface cut by $f$. Say that $f$ is \emph{cylindrical over a curve} if there exists a $\QQ$-linear map $\ell : \AA^n_\QQ\to \AA^2_\QQ$ and a curve $C$ in $\AA^2_\QQ$ such that $X=\ell^{-1}(C)$. We abbreviate \emph{not cylindrical over a curve} by NCC and say that $X$ is NCC if $f$ is.
\end{defn}

Note that $f$ is cylindrical over a curve if and only if there exist a polynomial $g\in \QQ[y_1, y_2]$ and linear forms $\ell(x), \ell'(x)$ over $\QQ$ such that $f(x) = g(\ell(x), \ell'(x))$.

\begin{defn}\label{defn:m-irreducible}
Let $f$ be a polynomial over $\QQ$ and let $r$ be a positive integer. We say that $f$ is \emph{$r$-irreducible} if $f$ does not have any factors of degree $\leq r$ over $\QQ$.
\end{defn}

The following is our main result on affine dimension growth over $\QQ$, for NCC hypersurfaces.
\begin{maintheorem}[Affine dimension growth]\label{thm:fd:Q}
Given $n\geq 3$ an integer, there exist constants $c = c(n), e = e(n)$ such that for all  polynomials $f$ in $\ZZ[y_1, \ldots, y_n]$ of degree $d\geq 3$ so that $f$ is irreducible over $\QQ$ and NCC, and whose homogeneous degree $d$ part $f_d$ is $2$-irreducible, one has for all $B\geq 2$ that
\begin{align*}
N_{\rm aff}(f, B)&\leq cd^7 B^{n-2}, &\text{ if } d\geq 5, \\
N_{\rm aff}(f, B)&\leq c B^{n-2} (\log B)^e, &\text{ if } d = 4, \\
N_{\rm aff}(f, B)&\leq c B^{n-3 + 2/\sqrt{3}} (\log B)^e, &\text{ if } d = 3.
\end{align*}
If $f_d$ is only $1$-irreducible (instead of $2$-irreducible), then we still have that
\begin{align*}
N_{\rm aff}(f, B)&\leq cd^7 B^{n-2} (\log B)^e, &\text{ if } d \geq 4, \\
N_{\rm aff}(f, B)&\leq c B^{n-3 + 2/\sqrt{3}} (\log B)^e, &\text{ if } d = 3.
\end{align*}
\end{maintheorem}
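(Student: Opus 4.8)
The plan is to reduce the affine counting problem, by induction on $n$, to the cases $n=3$ and to the projective counting on the hypersurface at infinity, using an effective Hilbert irreducibility theorem (see \Cref{sec:hilb,sec:eff.hilbert}) to slice $X=V(f)$ by hyperplanes while preserving irreducibility. First I would handle the base case $n=3$: here $X$ is an (irreducible, NCC) surface in $\AA^3$, and the bound $N_{\rm aff}(f,B)\le c B^{n-2}(\log B)^e$ for $d\ge 4$, respectively $c B^{n-3+2\sqrt 3}(\log B)^e$ for $d=3$, should come from the $p$-adic determinant method applied plane-section by plane-section: for a fixed plane $H$ not contained in $X$, the curve $X\cap H$ has degree $d$, and Heath-Brown's curve estimate (Theorem 9 of \cite{Heath-Brown-Ann}, in the affine normalization) gives $O_d(B^{2/d+\varepsilon})$ points of height $\le B$; summing over the $O(B)$ relevant values of $H$ gives roughly $B^{1+2/d}$, which matches $B^{n-3+2\sqrt3}$ when $d=3$ (since $1+2/3<2\sqrt3-1$, one must be slightly more careful — actually for $d=3$ one invokes the sharper $2\sqrt3$-type bound of \cite{Salb:d3,Salberger-dgc} on plane cubic curves, whence the exponent $2\sqrt 3$). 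The NCC hypothesis is what rules out the degenerate situation where $X$ is a cylinder over a curve, in which every plane section is reducible and the determinant method fails to gain.

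For the inductive step $n\ge 4$, I would proceed as follows. Pick a generic rational hyperplane $H\subset \AA^n$ (more precisely, a hyperplane of the form $y_n = a_1 y_1+\cdots+a_{n-1}y_{n-1}+a_0$ with small integer coefficients $a_i$, $|a_i|\lesssim \log B$ or even bounded), and consider $f|_H$, a polynomial in $n-1$ variables of degree $d$. Two things must be arranged simultaneously: (i) $f|_H$ remains irreducible over $\QQ$ and NCC, and $(f|_H)_d = f_d|_H$ remains $2$-irreducible (resp. $1$-irreducible); (ii) the contribution of points \emph{not} lying on any of the chosen hyperplanes is controlled. Point (i) is exactly where the higher-dimensional effective Hilbert irreducibility theorem enters: it guarantees that for all but few choices of $(a_0,\dots,a_{n-1})$ in a box of side $\sim\log B$ (or a fixed-size box, depending on the effective version), the slice $f|_H$ inherits irreducibility; preservation of the NCC and $r$-irreducibility-of-$f_d$ conditions should follow from a Bertini-type argument (again effective) applied to $f_d$ and to the relevant ``cylinder'' loci. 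For (ii), one covers the box $[-B,B]^n$ by $O(B)$ such hyperplanes (one for each value of $y_n - a_1y_1 - \cdots$, say), applies the inductive bound $N_{\rm aff}(f|_H, B)\le c(n-1) d^7 B^{(n-1)-2}(\cdots)$ on each good slice, and sums to get $c(n) d^7 B^{n-2}(\cdots)$; the few \emph{bad} hyperplanes where irreducibility fails are absorbed into the error term, either trivially (a bad slice still has $\le c d\, B^{n-2}$ points since it is a hypersurface of degree $d$ in $\AA^{n-1}$, and there are only $O((\log B)^{?})$ of them, or $O(1)$ of them in the sharpest version) or by a separate cruder count.

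The main obstacle I expect is the \emph{effective} and \emph{uniform} control in the Hilbert irreducibility / Bertini step, namely keeping the number of bad hyperplanes polynomial in $d$ (not exponential) and the size of the needed box small — ideally of fixed size independent of $B$, so that no extra $\log B$ factor is introduced when $d\ge 5$, which is precisely what is needed to get the clean bound $cd^7 B^{n-2}$ without logarithms in that range. This forces the argument to use the quantitative version of effective Hilbert irreducibility developed in \Cref{sec:eff.hilbert} rather than a soft one, and to track the degree dependence through Bertini carefully (the exponent $7$ in $d^7$ presumably bookkeeps: the $d^6$-type bound from the projective determinant machinery of \cite{CCDN-dgc}-style arguments in the base of the induction, times one more factor of $d$ from the number of bad slices or from the degree of the branch locus). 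A secondary subtlety is that the $d=3$ and $d=4$ cases cannot bootstrap off a logarithm-free base case, so for those one simply accepts the $(\log B)^e$ factor throughout, and the inductive sum over $O(B)$ hyperplanes only costs an extra bounded power of $\log B$, which is harmless. The passage to the projective statement \Cref{thm:dgcdegree:proj} is then standard via \cite{BrowningHeathBrown-Crelle}, after separately treating points on linear subspaces of maximal dimension as in \Cref{sec:linear:sub}.
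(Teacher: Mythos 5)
There is a genuine gap, in fact two. First, your base case $n=3$ does not give the required bound: fibering the surface into plane sections and applying curve estimates of shape $B^{2/d+\varepsilon}$ on each of the $O(B)$ sections yields at best $B^{1+2/d+\varepsilon}$, which is strictly weaker than the needed $cd^7B=cd^7B^{n-2}$ for $d\ge 5$ (and similarly fails for $d=4$). The paper instead applies the determinant method \emph{to the surface itself} (\Cref{prop:aux.affine.poly} gives one auxiliary polynomial of degree $\ll d^{7/2}B^{1/\sqrt d}$ capturing all integral points), and then the dominant contribution comes from integral points on \emph{lines contained in} $X$, which must be counted by a separate lattice-point argument (\Cref{prop1:Brow-Heath-Salb}); this is precisely where NCC and the $2$- resp.\ $1$-irreducibility of $f_d$ are used (directions of lines on $X$ lie on the curve $f_d=0$ at infinity, NCC bounds the number of lines per direction, and the degree $\ge 3$ resp.\ $\ge 2$ of the factors of $f_d$ makes the sum over directions converge). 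Your proposal contains no mechanism for counting points on lines, and attributes to NCC a different (and not operative) role; the $d=3$ exponent likewise comes from the square of the auxiliary degree ($B^{2/\sqrt d}$), not from cubic-curve bounds of Salberger type.

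Second, your inductive step slices by hyperplanes for \emph{all} $d$ using effective Hilbert irreducibility. The quantitative HIT available (\Cref{thm:eff.Hil.lemma:Q}) carries a factor $2^{O_n(d)}(\log\|f\|)^{O_n(1)}(B')^{1/2}$, so using it uniformly in $d$ destroys the polynomial dependence $d^7$ and introduces height/$B$ losses; this is why the paper uses hyperplane slicing only in the regime $d\le O_n(1)$, and there it relies on the height-free Bertini-type statement \Cref{thm:HIT.v2:Q} together with preservation of NCC (\Cref{cor:preserve.H.cond.with.HIT.Q}), with a single fixed direction $t$ and only $O_n(d)$ bad translates $b$ — the height-dependent HIT route you describe is essentially the proof of \Cref{thm:fd:Q:conj}, not of \Cref{thm:fd:Q}. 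For large $d$ the paper does not slice at all: it runs the determinant method directly in $\AA^n$ and handles the degree-one components of the auxiliary intersection (the $(n-2)$-planes on $X$) via \Cref{prop:count.on.n-2.planes} and \Cref{lem:number.of.n-2.planes}, whose $O(d^7)$ bound on isolated $(n-2)$-planes is the actual source of the exponent $7$. Without these two ingredients — counting on linear subvarieties contained in $X$, and avoiding HIT-based slicing outside the bounded-degree regime — the proposed argument cannot reach the stated bounds.
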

\begin{remark}
If $f$ is not NCC, then determining $N_{\rm aff}(f, B)$ reduces to counting integral points on a certain curve. Indeed, if $f$ is not NCC, then there exists a $\QQ$-linear map $\ell: \AA^n_{\QQ}\to \AA^2_\QQ$ and a curve $C\subset \AA^2_\QQ$ such that $V(f) = \ell^{-1}(C)$. Hence
\[
N_{\rm aff}(f, B) \leq c B^{n-2} N_{\rm aff}(C, B),
\]
where the constant $c$ depends only on $n$, see e.g.~\cite[Lemma 4.5]{Browning-Q}. In particular, one cannot hope to remove the NCC condition in this generality, as the example $x_1 = x_2^d$ considered as a hypersurface in $\AA^n$ shows.
\end{remark}
By the proof of \cite[Proposition 5]{CCDN-dgc}, the exponent of $d$ cannot drop below $2$ in the bound for $d\ge 5$ in Theorem \ref{thm:fd:Q}. In \cite[Theorem 1.2]{Vermeulen:affinedg}, Vermeulen removes the condition on $f_d$ from Theorem \ref{thm:fd:Q}, at the cost of losing the polynomial dependence on $d$ and reintroducing a factor of $B^\e$.
In Section~\ref{sec:global}, we give a variant of \Cref{thm:fd:Q} for any global field $K$ instead of $\QQ$ in \Cref{thm:fd:K}, which generalizes work by Vermeulen~\cite{Vermeulen:p} and Paredes--Sasyk~\cite{Pared-Sas}.


\subsection{A conjectural approach}\label{subsec:conj}
Our second affine variant is given by \Cref{thm:fd:Q:conj} and is based on Conjecture  \ref{conj:uniform.curve.conjecture}.
We first give some context to motivate this conjecture.

For curves, recall the following variant of a classical finiteness result, which follows from Siegel's theorem as stated in e.g.\ ~\cite[Thm.\,D.8.4, Thm.\,D.9.1, Rem.\,D.9.2.2]{hindrysilverman}, see also \cite{Schinzel}.
\begin{thm}[Siegel
]\label{thm: Siegel}
Given $f$ in $\ZZ[x,y]$ whose homogeneous part of highest degree 
is $2$-irreducible, then there are only finitely many solutions in $\ZZ^2$ to the equation $f(x,y)=0$.
\end{thm}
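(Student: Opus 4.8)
The plan is to reduce to Siegel's theorem on $S$-integral points of affine curves over a number field (as cited in the excerpt), the only real content being that the hypothesis on the leading form $f_d$ of $f$ forces the relevant curve to have at least three points at infinity, hence negative Euler characteristic after puncturing. First I reduce to the case where $f$ is geometrically irreducible. Writing $f=f^{(1)}\cdots f^{(k)}$ with the $f^{(i)}$ irreducible over $\QQ$ and using that the leading form is multiplicative, we get $f_d=\prod_i(f^{(i)})_{d_i}$ with $d_i=\deg f^{(i)}$, so each $(f^{(i)})_{d_i}$ divides $f_d$ and is therefore again $2$-irreducible; since $V(f)(\ZZ)=\bigcup_i V(f^{(i)})(\ZZ)$, we may assume $f$ itself is irreducible over $\QQ$. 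If $f$ is irreducible but not geometrically irreducible, the geometric components $C_1,\dots,C_m$ ($m\ge 2$) of $V(f)$ are pairwise distinct curves on which $\Gal(\bar\QQ/\QQ)$ acts transitively, so every $\ZZ$-point of $V(f)$, being Galois-invariant, lies on all the $C_i$ and hence in the finite set $C_1\cap C_2$; this case is thus immediate. From now on $X:=V(f)\subseteq\AA^2_\QQ$ is geometrically irreducible.

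Next I bound the points at infinity from below. Since $f_d$ is $2$-irreducible, each of its irreducible factors over $\QQ$ has degree $\ge 3$, and being separable (we are in characteristic $0$) such a factor has at least three distinct roots in $\PP^1(\bar\QQ)$; hence $f_d$ has at least three distinct roots there. The intersection of the projective closure $\bar X\subseteq\PP^2_\QQ$ of $X$ with the line at infinity is precisely the zero locus of $f_d$ on that line, so $\bar X$ has at least three distinct points at infinity. Let $\nu\colon\tilde C\to\bar X$ be the normalization, a finite surjective morphism with $\tilde C$ a smooth projective geometrically irreducible curve; then at least three points of $\tilde C$ lie over the points at infinity of $\bar X$.

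Finally, delete the finitely many singular points of $X$ to obtain a smooth affine curve $X^\circ:=X\setminus X_{\sing}$, which has the same smooth projective model $\tilde C$ and still at least three points at infinity. Then $2g(\tilde C)-2+\#\{\text{points of }\tilde C\text{ at infinity for }X^\circ\}\ge -2+3=1>0$, so Siegel's theorem in the form of \cite[Thm.\,D.8.4, Thm.\,D.9.1, Rem.\,D.9.2.2]{hindrysilverman} (finitely many integral points on an affine curve that is either of positive genus or of genus $0$ with at least three points at infinity) yields that $X^\circ(\ZZ)$ is finite; since $X(\ZZ)=X^\circ(\ZZ)\cup\big(X(\ZZ)\cap X_{\sing}\big)$ with $X_{\sing}$ finite, $X(\ZZ)$ is finite. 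There is no serious obstacle here: the statement is tailored so that $2$-irreducibility of $f_d$ guarantees three distinct points at infinity and thus negative Euler characteristic regardless of the genus of $\tilde C$. The only things to watch are the bookkeeping — carrying $2$-irreducibility through the factorization, disposing of the non-geometrically-irreducible case, and observing that normalizing and removing the singular locus can only add points at infinity, so the Euler characteristic remains negative.
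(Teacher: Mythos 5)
Your proof is correct and follows essentially the same route the paper intends: the paper states this as a known consequence of Siegel's theorem, citing exactly the results of Hindry--Silverman (Thm.~D.8.4, Thm.~D.9.1, Rem.~D.9.2.2) that you invoke. Your write-up simply fills in the standard reduction (factoring over $\QQ$, disposing of non-geometrically-irreducible factors via the Galois action, and using $2$-irreducibility of $f_d$ to produce at least three points at infinity on the normalization), which is exactly the deduction the paper has in mind.
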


Also recall a conjecture by W. M. Schmidt, just below (1.4) in \cite{Schmidt-points}, which conjectures a bound $c(d, \varepsilon)B^\varepsilon$ for integer points up to height $B$ on a planar curve of degree $d$ and positive genus, with surprising evidence in \cite{Dim-Gao-Hab}.

Motivated by the finiteness result of \Cref{thm: Siegel} for $d\ge 3$, by Schmidt's conjecture for positive genus, and by Theorem 2 of \cite{bombieri-pila}, we introduce a question which includes all genera. 

\begin{conj}\label{conj:uniform.curve.conjecture}
Fix $d\ge 2$ and $\varepsilon>0$. Then there exists a constant $c_{d,\varepsilon}$ such that for all $f$ in $\ZZ[x,y]$ of degree $d$ whose homogeneous part of highest degree $f_d$ is $1$-irreducible
and all $B\geq 1$ one has
\begin{equation}\label{con1:eq}
\tag{1.3}
N_{\rm aff}(f,B) \leq c_{d,\varepsilon}  B^{\varepsilon}.
\end{equation}
\end{conj}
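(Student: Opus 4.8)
\emph{Overview of the plan.} Since the statement is a conjecture, I will describe the structure a proof would have to take, reduce it to two hard cores, and point to exactly where all known inputs fall short. The plan is: (i) soft reductions to a geometrically irreducible curve and a clean reformulation of the hypothesis on $f_d$; (ii) the genus-zero case, which I expect to reduce to uniform Thue estimates but with a residual obstruction; (iii) the positive-genus case, which is exactly Schmidt's conjecture and is the main obstacle.

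\emph{Reductions.} First I would reduce to $f$ irreducible over $\QQ$: writing $f=\prod_j h_j$ with $h_j\in\ZZ[x,y]$ irreducible over $\QQ$, one has $N_{\rm aff}(f,B)\le\sum_j N_{\rm aff}(h_j,B)$, and each leading form $(h_j)_{\deg h_j}$ divides $f_d$, hence still has no rational linear factor and is again $1$-irreducible. If such an $f$ is still reducible over $\overline{\QQ}$, then $C:=V(f)$ is a union of finitely many Galois-conjugate geometrically irreducible curves and any point of $\ZZ^2$ on $C$ lies on all of them, hence on their common intersection, a set of size $O_d(1)$; so assume $C$ geometrically irreducible, with normalization $\pi\colon\widetilde C\to C$. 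The key translation is that $f_d$ being $1$-irreducible means exactly that $C$ has no $\QQ$-rational point on the line at infinity $L_\infty$, equivalently no $\QQ$-point of $\widetilde C$ lies over $C\cap L_\infty$; this makes the count genus-separated and is the hypothesis both remaining cases consume. The case $d=2$ is classical: a conic with no rational point at infinity is, after clearing denominators, governed by a binary quadratic form, and the divisor bound gives $N_{\rm aff}(f,B)\ll_\varepsilon B^\varepsilon$. So assume $d\ge 3$.

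\emph{Genus zero.} If $g(\widetilde C)=0$, parametrize $\widetilde C\cong\PP^1_\QQ$ by coprime binary forms of degree $d$, $(u:v)\mapsto (P_1(u,v):P_2(u,v):Q(u,v))$, with affine chart $Q\ne 0$ and coordinates $(x,y)=(P_1/Q,P_2/Q)$. Since the fibre of $Q=0$ maps into $L_\infty$ and $(u,v)\mapsto (P_1(u,v):P_2(u,v):0)$ sends rational points to rational points, the hypothesis forces $Q$ to have no rational linear factor, so $Q$ has degree $\ge 3$ and each value $Q(u,v)=m$ defines a Thue equation. An integral point $(a,b)\in C(\ZZ)$ with $|a|,|b|\le B$ --- apart from the $O_d(1)$ images of non-smooth points --- comes from a unique $t=u/v$ in lowest terms with $Q(u,v)\mid\gcd(P_1(u,v),P_2(u,v))$; by the Sylvester resultant identity the latter gcd divides $R:=\mathrm{Res}(P_1,P_2)$, so $Q(u,v)$ ranges over divisors of $R$, and one also has $\max(|a|,|b|)\asymp H(t)^d/|Q(u,v)|$, forcing $H(t)\ll_f (B|Q(u,v)|)^{1/d}$. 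Thus $N_{\rm aff}(f,B)$ is bounded, via uniform Thue bounds (Bombieri--Schmidt, Evertse, Stewart, Akhtari--Bengoechea) and a gap principle, by $\ll_d\tau(|R|)\log B$. The obstruction in genus zero is exactly the dependence on the resultant $R$ of the parametrization: one must show that a curve whose parametrization has a large resultant nonetheless has few integral points of bounded height, and making this uniform in the coefficients of $f$ is genuinely delicate.

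\emph{Positive genus --- the main obstacle.} If $g(\widetilde C)\ge 1$, then \Cref{conj:uniform.curve.conjecture} is precisely W.\,M.~Schmidt's conjecture from \cite{Schmidt-points}, which is open. The natural attacks combine, for $g\ge 2$, the uniform Mordell bounds of Dimitrov--Gao--Habegger \cite{Dim-Gao-Hab} with Chabauty--Coleman, and for $g=1$ the theory of integral points on elliptic curves, but all such inputs produce estimates polynomial in $\log B$ times a quantity depending on the \emph{arithmetic} of $C$ (Mordell--Weil rank, regulator, conductor), and replacing that by something depending only on $d$ and $\varepsilon$ is beyond current reach --- this is the point at which the conjecture touches the $\ell$-primary class-group bounds recalled in the introduction (Conjecture 3.5 of \cite{Zhang-equi}). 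Even the generic fallback, the determinant method of Bombieri--Pila/Heath--Brown applied directly to $C$, only yields $N_{\rm aff}(f,B)\ll_{d,\varepsilon}B^{1/d+\varepsilon}$ (Theorem 2 of \cite{bombieri-pila}), which falls short of $B^\varepsilon$ for \emph{every} fixed $d$, and is especially weak in the range $d=3,4$ that matters for \Cref{thm:fd:Q:conj}. I expect the positive-genus case, together with the uniformity obstruction in genus zero, to absorb essentially all of the difficulty; the soft reductions and the reformulation via \Cref{thm: Siegel} are routine by comparison.
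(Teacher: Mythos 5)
The statement you have addressed is Conjecture~\ref{conj:uniform.curve.conjecture}, which the paper does not prove: it is stated as an open conjecture, motivated by Siegel's theorem (Theorem~\ref{thm: Siegel}), by Schmidt's conjecture \cite{Schmidt-points} and the evidence in \cite{Dim-Gao-Hab}, by Pell and Thue equations, and by \cite[Theorem 5]{bombieri-pila} (which gives the bound only for $\varepsilon>1/d$ and only for irreducible $f$); it then enters the paper solely as a hypothesis in Theorem~\ref{thm:fd:Q:conj}. So there is no proof of it in the paper to compare against, and your submission is, by its own account, not a proof either: after the soft reductions you isolate two cores --- uniformity in the coefficients (via the resultant of the parametrization) in genus zero, and Schmidt's conjecture in positive genus --- and you state that both are beyond current reach. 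That assessment is accurate and consistent with how the paper itself frames the conjecture (``we expect this conjecture to be very difficult''), but as a proof attempt it has a gap that is the entire content of the statement: nothing beyond the preliminary reductions is established, and the target bound $c_{d,\varepsilon}B^{\varepsilon}$ is obtained in no nontrivial case beyond what was already known.

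A few secondary points in the reductions deserve correction if you keep this as a roadmap. In the genus-zero discussion, ``$Q$ has no rational linear factor, so $Q$ has degree $\ge 3$ and each value $Q(u,v)=m$ defines a Thue equation'' is not right as stated: $\deg Q\ge 3$ holds simply because $\deg Q=d\ge 3$, but the absence of a rational linear factor does not prevent $Q$ from being, say, a power of an irreducible quadratic form, in which case $Q(u,v)=m$ is Pell-type and contributes on the order of $\log B$ solutions in the box rather than the $O_{d}(1)$ a genuine Thue equation would give; this does not threaten the target bound $B^{\varepsilon}$, but the reduction is not purely to Thue equations, and the uniform inputs you would invoke (Bombieri--Schmidt, Evertse) do not cover that degenerate case directly. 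Also, the reduction to $\QQ$-irreducible $f$ uses the conjecture for all degrees $\le d$ (harmless, but worth saying, since the conjecture is stated for a fixed $d$), and your assertion that the case $d=2$ is ``classical'' with a constant depending only on $\varepsilon$ should be accompanied by a precise reference: the paper itself only offers Pell and Thue equations as \emph{evidence}, and the uniform treatment of indefinite conics in a box involves unit/class-number input rather than just the divisor bound. Your identification of the positive-genus case with Schmidt's conjecture and the link to \cite{Zhang-equi} match the paper's own discussion; the correct conclusion is simply that the statement remains a conjecture, both in the paper and after your analysis.
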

Conjecture \ref{conj:uniform.curve.conjecture} as above 
is in line with a more abstract conjecture on $\ell$-primary parts of class groups of number fields, as given in  \cite[Conjecture 3.5]{Zhang-equi}. Similarly, bounds on the number of integral points lying on planar curves also play an important role in the recent work  \cite{BSTTTZ20}, where the authors bound the  number of $2$-torsion elements in the class group of a number field of degree at least $3$ (see \cite[Section 1.6]{CCDN-dgc} for a longer discussion). 

The case of Pell or Thue equations forms some evidence for Conjecture \ref{conj:uniform.curve.conjecture}. In general, we expect this conjecture to be very difficult. Note that if $f$ is irreducible (and no condition on $f_d$), then by the well-known result by Bombieri and Pila~\cite[Theorem 5]{bombieri-pila}, the above conjecture holds as soon as $\varepsilon>1/d$.
 

We obtain affine dimension growth results for those values $d\geq 3$ and $\varepsilon>0$ for which
Conjecture  \ref{conj:uniform.curve.conjecture} holds, see \Cref{thm:fd:Q:conj}. This presents a (conjectural) approach to the uniform dimension growth conjecture (Conjecture \ref{con:dgcdegree:non-unif}) when $d=3$, since the projective case follows from the affine case. The proof of \Cref{thm:fd:Q:conj} uses a basic induction argument with base case given by Conjecture  \ref{conj:uniform.curve.conjecture} and induction step enabled by a higher dimensional effective form of Hilbert's irreducibility theorem. As far as we can see, the dimension growth conjecture for degree $3$ projective varieties is at best partially known: either non-uniformly (with $c$ depending furthermore on $X$ in Conjecture \ref{con:dgcdegree:non-unif}) in \cite{Salberger-dgc}, or, assuming $\varepsilon>1/7$ in \cite{Salb:d3}, or, with $n=3$ in \cite{Heath-Brown-Ann}. 

\begin{maintheorem}[Affine dimension growth, assuming Conjecture \ref{conj:uniform.curve.conjecture}]\label{thm:fd:Q:conj}
Fix $d\geq 2$ and $\varepsilon>0$ and suppose that Conjecture \ref{conj:uniform.curve.conjecture} holds for this $d$ and $\varepsilon$. Let $n\ge 2$ be given. Then there exists a constant $c_{n,d,\varepsilon}$ such that for all degree $d$ polynomials $f$ in $\ZZ[y_1,\ldots,y_n]$
whose homogeneous part of highest degree $f_d$ is $1$-irreducible, and all $B\geq 1$ one has
$$
N_{\rm aff}(f,B) \leq c_{n,d,\varepsilon}  B^{n-2+\varepsilon}.
$$
\end{maintheorem}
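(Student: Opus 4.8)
The plan is to prove Theorem~\ref{thm:fd:Q:conj} by induction on $n$, with the base case $n=2$ being exactly Conjecture~\ref{conj:uniform.curve.conjecture} (which we are assuming). So fix $d\geq 2$, $\varepsilon>0$, suppose the conjecture holds for this pair, and let $n\geq 3$; assume the bound $N_{\mathrm{aff}}(g,B)\leq c_{n-1,d,\varepsilon}B^{(n-1)-2+\varepsilon}$ for all degree-$d$ polynomials $g$ in $n-1$ variables whose top-degree part $g_d$ is $1$-irreducible. Let $f\in\ZZ[y_1,\ldots,y_n]$ have degree $d$ with $f_d$ $1$-irreducible. The idea is to slice $X=V(f)$ by a pencil of hyperplanes and count on each slice. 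Concretely, for $a=(a_1,\ldots,a_n)\in\ZZ^n$ with $|a_i|$ bounded by a suitable power of $B$ — say $|a_i|\leq B$ or a fixed small power — and for each integer $t$ with $|t|$ bounded accordingly, consider the affine hyperplane $H_{a,t}=\{y\in\AA^n : a_1y_1+\cdots+a_ny_n = t\}$. Every integer point $x$ on $X$ with $|x_i|\leq B$ lies on $H_{a,t}$ for $t=a\cdot x$, which ranges over at most $O(nB\max_i|a_i|)$ values. Restricting $f$ to a fixed $H_{a,t}$ (after an integral change of coordinates identifying $H_{a,t}$ with $\AA^{n-1}$) gives a polynomial $f_{a,t}$ of degree $\leq d$ in $n-1$ variables, and generically (by Bertini-type arguments and the hypothesis that $f_d$ is $1$-irreducible) its degree-$d$ part $(f_{a,t})_d$ equals the restriction of $f_d$ to the hyperplane at infinity, which is again $1$-irreducible for a generic choice of the direction $a$.

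The key steps, in order, are: (1) Fix an admissible direction $a\in\ZZ^n$, primitive, with $|a_i|\leq A$ for $A$ a small fixed power of $B$ (to be optimized), such that the linear form $\bar a$ cutting the hyperplane section of $V(f_d)\subset\PP^{n-1}$ yields a $1$-irreducible degree-$d$ form in $n-1$ variables; the effective Hilbert irreducibility / Bertini results advertised in the introduction (Sections~\ref{sec:hilb} and~\ref{sec:eff.hilbert}) guarantee such $a$ exist with small height and that ``most'' directions work, so one such $a$ can be fixed outright. (2) For this fixed $a$, every lattice point counted by $N_{\mathrm{aff}}(f,B)$ lies on one of $O(AB)$ hyperplanes $H_{a,t}$. (3) On each $H_{a,t}$, after an $\mathrm{SL}_n(\ZZ)$ change of variables sending $H_{a,t}$ to $\{y_n = 0\}$, the point has new coordinates of size $O(AB)$, and it is an integral zero of $f_{a,t}\in\ZZ[y_1,\ldots,y_{n-1}]$ of degree $d$ with $1$-irreducible top part; the induction hypothesis gives $N_{\mathrm{aff}}(f_{a,t}, O(AB))\leq c_{n-1,d,\varepsilon}(AB)^{n-3+\varepsilon}$. (4) Summing over the $O(AB)$ values of $t$ yields $N_{\mathrm{aff}}(f,B)\ll AB\cdot (AB)^{n-3+\varepsilon} = (AB)^{n-2+\varepsilon}$, and choosing $A=O(1)$ — possible because one good direction of bounded height suffices, the height bound depending only on $n$ and $d$ — gives the desired $c_{n,d,\varepsilon}B^{n-2+\varepsilon}$. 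One must separately dispose of the degenerate case where $f_d$ restricted to every bounded-height hyperplane at infinity fails to be $1$-irreducible, but this cannot happen for $1$-irreducible $f_d$ once $n\geq 3$: a generic hyperplane section of a degree-$d$ hypersurface in $\PP^{n-1}$ with no linear component still has no linear component, and effectivity of this genericity is exactly what Sections~\ref{sec:hilb}--\ref{sec:eff.hilbert} provide.

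The main obstacle is step (1): ensuring that a hyperplane section in a direction of \emph{bounded} (i.e.\ $O(1)$, depending only on $n,d$) height keeps the top-degree part $1$-irreducible. If one only gets such a direction with height a growing power of $B$, then $A$ is a power of $B$ and the final exponent becomes $n-2+\varepsilon$ multiplied by that power's contribution — still of the shape $B^{n-2+\varepsilon'}$ after adjusting $\varepsilon$, so in fact even a polynomially-bounded $A=B^{o(1)}$ or $A=B^{\delta}$ is tolerable provided we absorb the loss into $\varepsilon$; but keeping the bookkeeping clean (the factor $B^{n-2+\varepsilon}$ rather than $B^{n-2+\varepsilon+\delta}$) is what requires the \emph{effective} Hilbert irreducibility theorem rather than a soft Bertini statement. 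A secondary subtlety is that $f_{a,t}$ might have degree strictly less than $d$, or might even be reducible or identically zero for special $t$; the latter happens for at most $O(1)$ values of $t$ (a fixed number depending on $d$), contributing negligibly, and when $\deg f_{a,t}<d$ one still applies induction since the hypothesis only constrains the degree-$d$ part when it is nonzero — or more carefully, one notes the lower-degree case is handled by a trivial bound or a separate easy induction. Finally, one should double-check the base case: when $n=3$, each slice is a plane curve and we invoke Conjecture~\ref{conj:uniform.curve.conjecture} directly, obtaining $N_{\mathrm{aff}}(f,B)\ll AB\cdot (AB)^{\varepsilon}=(AB)^{1+\varepsilon}\ll B^{1+\varepsilon}=B^{n-2+\varepsilon}$, which closes the induction.
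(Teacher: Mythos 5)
Your skeleton is the same as the paper's: induction on $n$ with base case $n=2$ given by \Cref{conj:uniform.curve.conjecture}, and an induction step that slices by a pencil of hyperplanes chosen so that the top-degree part of the restricted polynomial stays $1$-irreducible (the paper uses the explicit hyperplanes $x_n=tx_{n-1}+b$, which sidesteps your $\mathrm{SL}_n(\ZZ)$ change of variables and the $O(AB)$ box blow-up, but that is cosmetic). The genuine gap is in your step (1): you assert a good slicing direction of height $O_{n,d}(1)$, depending only on $n$ and $d$, and claim this is ``exactly what'' the Hilbert/Bertini results of the paper provide. \Cref{thm:HIT.v2:Q} does give such a bounded-height statement, but only for homogeneous polynomials in at least $4$ variables; it is unavailable precisely at the last induction step $n=3\to n=2$, where the restriction of the ternary form $f_d$ to a line of $\PP^2$ is a binary form and ``no rational linear factor'' is not a Bertini-generic condition: the bad lines are exactly those passing through a rational point of the plane curve $V(f_d)$, and when that curve has infinitely many rational points these bad lines fill infinitely many pencils, not a hypersurface of degree $O_d(1)$ in the dual plane. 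So your parenthetical claim that the degenerate case ``cannot happen \ldots once $n\ge 3$'', with effectivity supplied by the paper's results, is unjustified exactly where it matters most. The tool that works for every $n\ge 3$ is \Cref{cor:eff:Hil} (effective Hilbert irreducibility), but its bound on the good specialization $t$ is polynomial in $\log \|f\|$, not in $n,d$ alone.

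This is where the paper inserts an ingredient your proposal is missing: one first disposes of polynomials of huge height --- if $\|f\|$ exceeds a fixed power of $B$ (of order $B^{d\binom{d+n-1}{n-1}}$), then $N_{\rm aff}(f,B)$ is already acceptably small by \cite[Lem.~5]{Brow-Heath-Salb} --- so that one may assume $\log\|f\|\ll_{n,d}\log B$. Then \Cref{cor:eff:Hil} yields $t\in\ZZ$ with $|t|\le \mathrm{poly}_{n,d}(\log B)$ such that $f_d(x_1,\ldots,x_{n-1},tx_{n-1})$ is still $1$-irreducible, and summing the induction hypothesis over the $O((|t|+1)B)$ hyperplanes $x_n=tx_{n-1}+b$ costs only a polylogarithmic factor, harmless against $B^{\varepsilon}$. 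Without that preliminary reduction, the height of your direction is controlled only by $\log\|f\|$, which is not bounded in terms of $B$ at all (uniformity over all $f$ of degree $d$ is the whole point), so ``summing over $O(AB)$ slices'' has no content; and your fallback of taking $A$ a power of $B$ and ``absorbing the loss into $\varepsilon$'' does not prove the stated theorem, since the conclusion must hold with the same $\varepsilon$ for which \Cref{conj:uniform.curve.conjecture} is assumed, and you cannot rerun the base case with a smaller $\varepsilon$. Add the large-height reduction and replace the $O_{n,d}(1)$-height claim by the $\log\|f\|$-dependent \Cref{cor:eff:Hil}, and your argument becomes essentially the paper's proof.
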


\Cref{thm:fd:Q:conj} includes the planar case $n=2$, which coincides with Conjecture \ref{conj:uniform.curve.conjecture}. A sharpening of Conjecture \ref{conj:uniform.curve.conjecture} with an upper bound with $(\log B)^e$ for some $e=e(d)$ instead of the factor $B^\varepsilon$ would naturally lead to a corresponding improvement in \Cref{thm:fd:Q:conj}.



The proof of \Cref{thm:fd:Q:conj} follows a  simple induction argument by cutting with well-chosen hyperplanes, reducing it to  the planar case of Conjecture \ref{conj:uniform.curve.conjecture}. The choice of hyperplanes is enabled by a higher-dimensional effective form of Hilbert's irreducibility theorem given below as \Cref{thm:eff.Hil.lemma:Q}, which is a new ingredient compared to the induction argument from \cite{Brow-Heath-Salb}.

\subsection{Higher dimensional effective  variants of Hilbert's irreducibility theorem}\label{sec:hilb}

Given a polynomial $f$ over $\QQ$, let $\lVert f\rVert$ denote the height of its tuple of coefficients in projective space. Theorems \ref{thm:eff.Hil.lemma:Q} and \ref{thm:HIT.v2:Q} are our main results on higher dimensional effective Hilbert's irreducibility theorem.

\begin{maintheorem}\label{thm:eff.Hil.lemma:Q}
Let $r,n,d$ be positive integers,
 let $\underline{T}=(T_1, \ldots, T_r), \underline{Y}=(Y_1, \ldots, Y_n)$ be tuples of variables, and let
$F\in \ZZ[\underline{T},\underline{Y}]$ be an irreducible polynomial of degree $d$ in  $\QQ[\underline{T},\underline{Y}]$ such that $\deg_{\underline{Y}} F \geq 1$. For an integer $B \geq 1$ denote by $S_T(F, B)$ the number of $\underline{t}=(t_1,\ldots,t_r)\in [-B,B]^r$ for which $F(\underline{t},\underline{Y})$ is reducible in $\QQ[\underline{Y}]$. Then
\[
S_T(F, B) \ll_{n,r} 2^{c(r)(d+1)^n}  (\log \|F\| + 1)^{10} B^{r- 1/2}\log(B)^{10(r-1)},
\]
for every $B \ge 2$, where $c(r)=118+10(r-1)$.
\end{maintheorem}


As a corollary, we obtain an effective high dimensional version of Hilbert's irreducibility theorem for several polynomials.
Since for applications one might need a good tuple $t$ which avoids a given  hypersurface, we state it accordingly.
See ~\cite[Th\'{e}orèmes 2, 3]{Walkowiak} for a similar formulation in the one dimensional case.

\begin{cor}\label{cor:eff:Hil}
Let $r,n,m,d$ be positive integers,
let $\underline{T}=(T_1, \ldots, T_r), \underline{Y}=(Y_1, \ldots, Y_n)$ be tuples of variables, and let $F_1, \ldots, F_m$ be irreducible polynomials in $\QQ[\underline{T}, \underline{Y}]$ of degree at most $d$ such that $\deg_{\underline{Y}}(F_i) \ge 1$ for each $i$.
Then there exists a polynomial $G$ of degree at most $30$ in $m+1$ variables whose coefficients depend on $r,n,m,d$
 such that the following holds for every $s \ge 1$. If $X \subset \AA^r_\QQ$ is a (possibly reducible) hypersurface  of degree  at most $s$, then we have a
 tuple of integers $\underline{t} \in \ZZ_{\ge 0}^{r}$ not contained in $X(\QQ)$ of height \[
 \ll_{n,r} G(\log \lVert F_1\rVert, \ldots, \log \lVert F_m\rVert,s),
\]
 such that for each $1 \le i \le m$
 the polynomial $F_i(\underline{t},\underline{Y})$ is irreducible over $\QQ$.
\end{cor}


The planar case  of \Cref{thm:eff.Hil.lemma:Q} where $r=n=1$ is covered by \cite{Walkowiak}, \cite{DebesW}, \cite{Schinzel-Zann}, and \cite{Pared-Sas:Hilbert}.  
We first prove \Cref{thm:eff.Hil.lemma:Q} in the case where $r=1$ and $n > 1$. It is based on the case $r=n=1$ from \cite{Walkowiak} and  \cite{Pared-Sas:Hilbert}, and on the use of Kronecker transforms.
We then use induction to prove the $r>1$ case.

In fact, for our purposes we only need the case where $r=1$ and $n\ge 2$, and  so the last induction argument is not strictly necessary for our dimension growth results. \Cref{thm:eff.Hil.lemma:Q} and \Cref{cor:eff:Hil} as given above correspond to the general situation of Hilbert's irreducibility theorem, and are of independent interest.

Finally, we also obtain the following result resembling both Bertini and effective Hilbert's irreducibility theorem, which does not include dependence on the height of $F$, and which we use in the proof of \Cref{thm:fd:Q} to take care of the case where the degree of $f$ is small. Let $(\PP^{n-1}_{\QQ})^*$ denote the dual space of $\PP^{n-1}_{\QQ}$, which parametrizes hyperplanes in $\AA^n_{\QQ}$ through the origin. We will identify elements of $(\PP^{n-1}_{\QQ})^*$ with linear forms  in $n$ variables (up to a scalar).

\begin{thm}
\label{thm:HIT.v2:Q}
Let $n \ge 4$, and let $F \in \ZZ[X_1,\ldots,X_n]$ be an $r$-irreducible homogeneous polynomial for an integer $r \ge 1$. Then there exists a hypersurface $W \subset (\mathbb{P}^{n-1}_{\QQ})^*$ of degree at most $O_n(d^{3})$ such that if $\ell \notin W$ is a $\QQ$-linear form, then $F|_{\{\ell=0\}}$ is $r$-irreducible over $\QQ$.
\end{thm}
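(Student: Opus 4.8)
The plan is to reduce to a Bertini-type irreducibility statement by means of the following reformulation. For a homogeneous $F$ of degree $d$ in $n$ variables, a generic hyperplane section $F|_{\{\ell=0\}}$ factors as $\prod G_j$ with the $G_j$ absolutely irreducible (or irreducible over $\QQ$, as needed); the failure of $r$-irreducibility means one of these factors, or a product of a $\Gal(\overline\QQ/\QQ)$-orbit of them, has degree $\le r$. So I would first build, for each possible degree $e \le r$, the locus in $(\PP^{n-1}_\QQ)^*$ of hyperplanes $H=\{\ell=0\}$ for which $F|_H$ acquires a factor of degree $e$ over $\QQ$ (equivalently over $\overline\QQ$, after taking orbits). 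The key point is that this locus is contained in a proper Zariski-closed subset $W_e$ of the dual space, and one needs an effective bound $\deg W_e = O_n(d^3)$; then $W = \bigcup_{e\le r} W_e$ works, with the same order of degree bound since $r \le d$ and we can absorb the number of terms into the $O_n$.

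To produce $W_e$ effectively I would use the standard incidence-variety / resultant construction. Consider the universal hyperplane: parametrize $H$ by $\ell \in (\PP^{n-1})^*$, choose (on a chart) a linear isomorphism $H \cong \AA^{n-1}$ depending algebraically on $\ell$, and write $F|_H$ as a polynomial of degree $d$ in $n-1$ variables whose coefficients are homogeneous of bounded degree in the entries of $\ell$. For a fixed target degree $e$, the condition ``$F|_H$ has a factor of degree $e$'' is the image of the closed condition on the space of pairs $(\text{degree-}e \text{ form } G, \text{degree-}(d-e) \text{ form } G')$ cutting out $G\cdot G' = F|_H$; eliminating the coefficients of $G,G'$ (via resultants/Gröbner elimination, or by a dimension count on the incidence variety followed by taking the closure of the projection) yields equations for $W_e$ in $\ell$. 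The degree of the resulting elimination ideal is controlled by Bézout applied to the incidence variety, whose defining equations have degree $O(d)$ in a space of dimension $O(d^{n-1})$; the crude Bézout bound would be exponential, so instead I would track degrees more carefully using the fact that the incidence variety is a bundle over the Grassmannian of hyperplanes with fibers the (factorizable locus in the) space of degree-$d$ forms, and that the projection to $(\PP^{n-1})^*$ has generically finite fibers of degree bounded polynomially in $d$ once we fix the combinatorial type of the factorization — this is where the exponent $3$ should come from (one factor of $d$ for the degree of the section's coefficients in $\ell$, one for the elimination of one group of factor-coefficients, one slack).

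The main obstacle is exactly this degree bookkeeping: showing that eliminating the auxiliary factor variables does not blow the degree up exponentially. A clean way around a naive Bézout loss is to invoke a Bertini-irreducibility theorem with effective bad-locus degree directly — e.g.\ the results in the literature on effective Bertini (Benoist, or the quantitative Bertini irreducibility bounds) give that the locus of hyperplanes for which $F|_H$ is \emph{not} absolutely irreducible, when $F$ itself is absolutely irreducible, lies in a hypersurface of degree $O_n(d^{O(1)})$; I would then handle the general $r$-irreducible (not necessarily irreducible) $F$ by applying this to each absolutely irreducible factor of $F$ separately and combining the bad loci, noting that each factor has degree $\le d$ and that there are at most $d$ of them. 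One still must check the passage between ``absolutely irreducible'' and ``$r$-irreducible over $\QQ$'': a degree-$\le r$ $\QQ$-factor of $F|_H$ splits over $\overline\QQ$ into absolutely irreducible pieces of total degree $\le r$, so it suffices that $F|_H$ has no absolutely irreducible factor of degree $\le r$ whose $\Gal(\overline\QQ/\QQ)$-orbit has total degree $\le r$; since $n \ge 4$ guarantees $\dim\{\ell=0\} \ge 3 \ge 2$, Bertini keeps absolutely irreducible factors absolutely irreducible on the generic section, and the arithmetic descent of the factorization pattern imposes only finitely many further algebraic conditions on $\ell$, again of degree $O_n(d^3)$. Assembling $W$ as the union of all these hypersurfaces and bounding its degree by $O_n(d^3)$ completes the argument.
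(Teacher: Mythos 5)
Your second route --- factor $F$ into absolutely irreducible components, apply an effective Bertini theorem to each, and then handle the descent of the $\QQ$-factorization pattern --- is the paper's strategy (the elimination-theoretic first half you rightly abandon because of the B\'ezout blow-up). But there is a genuine gap at the point you compress into one clause, namely that ``the arithmetic descent of the factorization pattern imposes only finitely many further algebraic conditions on $\ell$, again of degree $O_n(d^3)$.'' This is precisely the part that does not follow from Bertini and needs an argument. Write a $\QQ$-irreducible factor $g$ of $F$ as the product $h_{i_1}\cdots h_{i_s}$ of a Galois orbit of absolutely irreducible forms. Even if every $h_{i_j}|_{\{\ell=0\}}$ stays absolutely irreducible, the restriction $g|_{\{\ell=0\}}$ can acquire a $\QQ$-factor of degree $\le r$ when two conjugate restrictions become proportional, since then the reduced orbit product drops in degree; you state the correct criterion for $r$-irreducibility of $F|_{\{\ell=0\}}$ but give no reason why the locus of such $\ell$ is a proper closed subset of controlled degree. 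The paper supplies the missing geometry: if $\ell$ avoids the Bertini loci of $h_i$ and $h_j$ and the hyperplane $\{\ell=0\}$ does not contain the $(n-3)$-dimensional set $Z_{i,j}=V(h_i)\cap V(h_j)$, then any common component of $h_i|_{\{\ell=0\}}$ and $h_j|_{\{\ell=0\}}$ would force $\dim(Z_{i,j}\cap\{\ell=0\})=n-3$, a contradiction, so the two restrictions remain coprime and the Galois orbit structure (hence the degrees of the $\QQ$-irreducible factors) is preserved; moreover the hyperplanes containing such a set form a linear subspace of $(\PP^{n-1}_{\QQ})^*$ of dimension at most $1$, so summing over the at most $d(d-1)/2$ pairs of conjugate components contributes a union of $O(d^2)$ points and lines to $W$. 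Nothing playing this role appears in your write-up, and without it the degree bound --- indeed the existence of a suitable $W$ --- is unsubstantiated.

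A second, smaller defect is the effectivity of the Bertini input itself: the statement requires the exponent $3$, but you invoke ``quantitative Bertini'' only with an unspecified $d^{O(1)}$ and then assert $O_n(d^3)$. The paper obtains the exponent from Noether forms (Ruppert): by Lemma \ref{lem:Bertini} (in characteristic zero, via \cite[Lemma 4.3.7]{CCDN-dgc}), the hyperplanes over which an absolutely irreducible component of degree $d_i$ becomes geometrically reducible lie in a hypersurface of degree $O_n(d_i^{3})$ of the dual space, and summing over components gives $O_n(d^3)$; note also that the relevant dimension hypothesis is $\dim V(h_i)=n-2\ge 2$, i.e.\ exactly $n\ge 4$. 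So the skeleton of your proposal matches the paper, but the two quantitative pillars --- the degree-$O_n(d^3)$ Bertini locus and, above all, the degree-$O(d^2)$ locus controlling the collapse of Galois orbits --- are asserted rather than proved.
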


The proof of \Cref{thm:HIT.v2:Q} uses Bertini's irreducibility theorem in an essential way, and therefore works for homogeneous polynomials in at least $4$ variables.


In \Cref{sec:global}, we naturally obtain versions of our main results for all global fields, namely  Theorems \ref{thm:fd:Q} up to
\ref{thm:HIT.v2:Q} which generalize and improve recent results for global fields from \cite{Vermeulen:p}, \cite{Pared-Sas} and  \cite{Pared-Sas:Hilbert}.

\subsection*{Acknowledgements} The authors would like to thank Rami Aizenbud, Gal Binyamini, Tim Browning, Wouter Castryck, Vesselin Dimitrov, Philip Dittmann, Ziyang Gao, Itay Glazer, Marcelo Paredes, Roman Sasyk and Stephan Snegirov for interesting discussions on the topics of the paper.
We would like to particularly thank Per Salberger for sharing a draft of his work \cite{Salb.upcoming}, and for an email correspondence which inspired us to prove a further variant of Theorem \ref{thm:fd:Q},
where the conditions of $2$-irreducibility and $1$-irreducibility of $f_d$ are replaced by alternative ones (see \Cref{thm:final}). We also thank the anonymous referee for their valuable time in reviewing the manuscript. 

R.C., P.D.~and Y.I.H.~were partially supported by the Labex CEMPI (ANR-11-LABX-0007-01).
R.C.~was partially supported by KU Leuven IF C16/23/010.
Y.I.H.~was partially supported by FWO Flanders (Belgium) with grant number 12B4X24N. K.H.N.~was partially supported by FWO Flanders (Belgium) with grant number 1270923N and by the Excellence Research Chair “FLCarPA: L-functions in positive characteristic and applications” financed by the Normandy Region. F.V.~was supported by FWO Flanders (Belgium) with grant number 11F1921N. R.C., P.D.~and F.V.~would like to thank the CNRS France-Japan AHGT International Research Network.

\subsection*{Notation} For functions $f, g$, and some data $n=n_1,n_2,\ldots, n_\ell$, we write that $f = O_n(g)$, or $f\ll_n g$ if there exists a constant $c = c(n)$ depending only on $n$ such that $|f| \leq c |g|$. We write $f = o(g)$ if $\lim\limits_{x \to \infty} (f/g)(x) = 0$.

For a polynomial $f$  over a field $K$, we denote by $V(f)$ the corresponding variety defined by $f = 0$. Depending on context, this is either a projective or affine variety. If $f$ is of degree $d$, we denote by $f_d$ its  homogeneous degree $d$ part.

\section{Preliminaries}\label{sec:prelim}

To prove our results, we need to construct certain auxiliary polynomials, for which we use~\cite[Prop.\,4.2.1]{CCDN-dgc}.

\begin{prop}[{\cite[Proposition\,4.2.1]{CCDN-dgc}}]\label{prop:aux.affine.poly}
Let $n\geq 1$ be an integer and let $f\in \ZZ[x_1, \ldots, x_{n+1}]$ be primitive and irreducible of degree $d\geq 1$. Then for each integer $B\geq 1$ there exists a polynomial $g\in \ZZ[x_1, \ldots, x_{n+1}]$ of degree at most
\[
\ll_n d^{3-1/n} B^{\frac{1}{d^{1/n}}} (\log B+d),
\]
%
not divisible by $f$ and vanishing on all points of $x\in \ZZ^{n+1}$ with $f(x) = 0$ and $|x_i|\leq B$.
\end{prop}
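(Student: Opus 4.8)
The plan is to run the $p$-adic determinant method of Heath-Brown, in the global form of Salberger, while keeping every implied constant independent of $d$ and of $\|f\|$. We may assume $f$ primitive. Fix a degree parameter $D$, to be chosen of size $\asymp_n d^{4-1/n}B^{1/d^{1/n}}\log B$; the goal is to produce $g$ with $\deg g\le D$. First I would reduce the statement to a rank inequality: it suffices to show that the evaluation functionals $\mathrm{ev}_x\colon h\mapsto h(x)$, for $x$ ranging over the finite set $\Sigma=\{x\in\ZZ^{n+1}:f(x)=0,\ |x_i|\le B\}$, do not span the dual of
\[
W_D=\QQ[x_1,\dots,x_{n+1}]_{\le D}\big/\bigl(f\cdot\QQ[x_1,\dots,x_{n+1}]_{\le D-d}\bigr),
\]
since a nonzero element of the kernel of $W_D\to\QQ^{\Sigma}$ lifts to a polynomial $g$ of degree $\le D$, not divisible by $f$, vanishing on all of $\Sigma$. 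Here $\dim_\QQ W_D$ is the Hilbert function of the degree-$n$ hypersurface $V(f)$, with leading term $\tfrac d{n!}D^{n}$; the factor $d$ here is precisely what will generate the exponent $1/d^{1/n}$ on $B$ after the optimization.

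Next I would set up the local/adelic input. Choose an auxiliary prime $p$ (or, in the global version, the family of all primes up to a bound) coprime to the content of $f$ and to a resultant/discriminant integer attached to $f$ having only $\ll_n\log\|f\|+1$ prime divisors, so that $V(\bar f)\subset\AA^{n+1}_{\FF_p}$ is geometrically reduced of degree $d$ with singular locus of dimension $\le n-1$ and degree $\ll_n d^{2}$. I would stratify $\Sigma$ along the reduction $\Sigma\to V(\bar f)(\FF_p)$. For a residue class reducing to a smooth point $\bar P$, the variety $V(f)$ is, $\ZZ_p$-analytically near $\bar P$, the graph $x_{j_0}=\varphi(x_{j_1},\dots,x_{j_n})$ of a convergent power series in $n$ of the coordinates; substituting this relation into a basis of $W_D$ and expanding around $\bar P$ turns the matrix with rows indexed by the points of the class and columns indexed by basis elements into one whose $j$-th column, after $\ZZ_p$-unimodular column operations, is divisible by $p^{w_j}$, where $w_j$ is the order of vanishing at $\bar P$ of the $j$-th basis function along the $n$-dimensional graph. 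The crucial point --- this is the heart of the method --- is that, because one works on $V(f)$ rather than on $\AA^{n+1}$, these orders are large: ordering $w_1\le w_2\le\cdots$, Heath-Brown's counting lemma gives $\sum_{j\le r}w_j\gg_n d^{-1/n}r^{1+1/n}$, so that an $r\times r$ minor built from the $r$ lowest-weight columns is divisible by $p^{\,\gg_n d^{-1/n}r^{1+1/n}}$.

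I would then balance this against the archimedean size. Hadamard's inequality bounds every $r\times r$ minor of the matrix by $r^{r/2}B^{Dr}$ in absolute value, so a minor must vanish once $p^{\,c_nd^{-1/n}r^{1+1/n}}>r^{r/2}B^{Dr}$, which bounds the rank contributed by each smooth residue class. Summing this bound over the $\ll_n d\,p^{n}$ smooth classes (and, in the global version, over all the primes simultaneously), and treating the $\ll_n d^{2}p^{n-1}$ singular classes by descending induction on $n$ --- their points lie on $V(f)\cap V(\partial f/\partial x_i)$ for some $i$, a variety of dimension $\le n-1$ and degree $\ll_n d^{2}$, with base case $n=1$ the classical determinant bound for plane curves --- I would bound $\mathrm{rank}\{\mathrm{ev}_x\}_{x\in\Sigma}$. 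The final step is to choose the free parameters $p$ and $D$ (with $D\asymp_n d^{4-1/n}B^{1/d^{1/n}}\log B$, $p$ a prime of appropriate size avoiding the $\ll_n\log\|f\|$ bad primes by Bertrand's postulate) so that this rank is strictly smaller than $\dim_\QQ W_D\sim\tfrac d{n!}D^{n}$, producing the required $g$ and the stated degree bound.

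The main obstacle is the bookkeeping needed for the uniformity in $d$: unlike the classical statement with $O_{n,d,\varepsilon}$-constants, here one must track explicitly the leading coefficient $\tfrac d{n!}$ of the Hilbert polynomial of $V(f)$, the degree $O_n(d^{2})$ of its singular locus, and the multinomial factors in Hadamard's estimate, and arrange the dimension induction so that the accumulated powers of $d$ do not exceed $d^{4-1/n}$. The subsidiary difficulties --- keeping the bad primes under control so that $\bar f$ stays reduced with a small singular locus while $p$ remains of the correct size, and making the local graph expansion and the order-of-vanishing count uniform over all smooth reductions --- are where most of the technical care goes. This is the content of \cite[Proposition 4.2.1]{CCDN-dgc}.
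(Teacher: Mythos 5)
Your route is genuinely different from the paper's, and it is important to be clear about what the paper actually does here: it does not re-run the determinant method at all. It invokes \cite[Proposition~4.2.1]{CCDN-dgc} directly (taking the first term of the minimum there), whose degree bound is
$\ll_n B^{1/d^{1/n}} d^{2-1/n}\frac{\log\|f_d\|+d\log B+d^2}{\|f_d\|^{1/(nd^{1+1/n})}}+d^{1-1/n}\log B+d^{4-1/n}$,
and then removes the dependence on the height by the elementary observation that $\log(x)/x^{1/(nd^{1+1/n})}\le nd^{1+1/n}/e$ on $[1,\infty)$, the extra factor $\ll_n d^{1+1/n}$ being absorbed into $d^{4-1/n}$. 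Re-deriving the cited proposition from scratch by the $p$-adic determinant method is in principle a legitimate (if far longer) alternative --- it is how that proposition is proved in \cite{CCDN-dgc} --- but then your sketch must itself produce the height-uniformity of the final bound, and that is exactly where it has a genuine gap.

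Concretely: you assert a final degree bound $\ll_n d^{4-1/n}B^{1/d^{1/n}}\log B$ with no dependence on $\|f\|$, yet your construction selects an auxiliary prime avoiding the bad primes dividing content/discriminant data attached to $f$ (of which there are $\ll_n d^{O_n(1)}(\log\|f\|+1)$, not $\ll_n\log\|f\|+1$). When $\log\|f\|$ is large compared with the optimal prime size (a fixed power of $B$ times powers of $d$), Bertrand's postulate only supplies a good prime of size growing with $\log\|f\|$; a larger-than-optimal prime increases the number of residue classes, hence the rank bound, hence the degree $D$ one is forced to take, so the bound obtained this way degrades with $\|f\|$. This is precisely why the statements of Salberger, Walsh and \cite{CCDN-dgc} all retain height terms, and in \cite[Proposition~4.2.1]{CCDN-dgc} the $\log\|f_d\|$ in the numerator is compensated by the factor $\|f_d\|^{-1/(nd^{1+1/n})}$; only this compensation (or, alternatively, a separate treatment of the regime of very large $\|f\|$, as in \cite[Lemma~5]{Brow-Heath-Salb}) lets one eliminate the height, which is the entire content of the paper's short proof. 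Your plan contains no mechanism of either kind, so as written it yields a bound with an unavoidable $\|f\|$-dependence rather than the stated one. A smaller point: if you do redo the argument from scratch, you should also incorporate the amendment the paper records for \cite[Lemma~4.2.2]{CCDN-dgc} (explicit $d$-dependence and coprimality of $g$ with $f$), as carried out in \cite[Lemma~5.12]{Pared-Sas} and \cite[Lemma~3.5]{Vermeulen:p}.
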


\begin{proof}
Write $f_d$ for the degree $d$ homogeneous part of $f$.
Using ~\cite[Proposition\,4.2.1]{CCDN-dgc} with the first term of the minimum, we obtain such an interpolating polynomial $g$ of degree at most
\[
\ll_n B^{1/d^{1/n}} d^{2-1/n} \frac{\log \lVert f_d\rVert + d\log B + d^2}{\lVert f_d\rVert^{\frac{1}{nd^{1+1/n}}}} + d^{1-1/n}\log B + d^{4-1/n}.
\]
Now, we use the fact that the function
\[
 h(x)=\frac{\log(x)}{ x^{\frac{1}{nd^{1+1/n}}}}
 \]
  is bounded by $nd^{1+1/n}/e$ on the interval $[1, \infty)$  to get the desired result (we amend \cite[Lemma 4.2.2]{CCDN-dgc} which is used in the proof of \cite[Proposition\,4.2.1]{CCDN-dgc}, that the dependence on $d$ should have been made explicit and that $g$ should be coprime with $f$; this is done in both \cite[Lemma 5.12]{Pared-Sas} and in \cite[Lemma 3.5]{Vermeulen:p} and is harmless for the proof of \cite[Proposition\,4.2.1]{CCDN-dgc}).
\end{proof}

Combining this with a projection argument as for \cite[Proposition\,4.3.1]{CCDN-dgc} gives the following.
Note that the proofs of \cite[Propositions\,4.3.1 and 4.3.2]{CCDN-dgc} refer to \cite{Harris} which assumes characteristic zero, but the result still goes through in the positive characteristic case, by the basic theory of Chow varieties, see e.g.~\cite[Prop.\,8.3]{Rydh}, \cite[Ex.\,4.9\,Part I]{hartshorne}. We amend Theorem 5.9 of \cite{Pared-Sas}, where the degree of $\varphi(C)$ is not sufficiently controlled in the proof, and, an extra factor $d$ appears which is forgotten in the upper bound of the statement of the theorem; to avoid these problems, one should reason as for \cite[Proposition\,4.3.1]{CCDN-dgc}.

\begin{cor}[{{\cite[Theorem\,3]{CCDN-dgc}}}]\label{cor:counting.affine.curve}
Let $n\geq 2$ be an integer and let $C\subset \AA^n_{\QQ}$ be an affine irreducible curve of degree $d$. Then for every integer $B \ge 1$,
\[
N_{\rm aff}(C, B) \ll_n d^3 B^{1/d}(\log B + d).
\]
\end{cor}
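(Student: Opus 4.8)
The plan is to reduce the count on the affine curve $C \subset \AA^n_\QQ$ to the hypersurface case handled by \Cref{prop:aux.affine.poly}, via a generic linear projection down to the plane. First I would choose a generic $\QQ$-linear projection $\pi \colon \AA^n_\QQ \to \AA^2_\QQ$; for a suitable choice $\pi|_C$ is birational onto its image $C' = \overline{\pi(C)}$, which is an irreducible plane curve of degree $\le d$ (here one uses the basic theory of Chow varieties to control the degree of the image, exactly as in the proof of \cite[Proposition\,4.3.1]{CCDN-dgc}, working in arbitrary characteristic via \cite[Prop.\,8.3]{Rydh}). Moreover one can take the coefficients of $\pi$ to be integers bounded in terms of $n$ and $d$ only, so that $\pi$ maps $\ZZ^n$-points of height $\le B$ to $\ZZ^2$-points of height $\ll_{n,d} B$; absorbing constants, it suffices to bound $N_{\rm aff}(C', B')$ for $B' \ll_n B$, up to the fibers of $\pi|_C$, which are generically finite of size $\ll_n d$. (The points where $\pi|_C$ is not finite, or not injective, lie in a proper closed subset and contribute a negligible amount, or can be handled by a secondary projection.)

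Next, for the plane curve $C' = V(g)$ with $g \in \ZZ[x,y]$ irreducible of degree $d' \le d$, I would apply \Cref{prop:aux.affine.poly} with $n = 1$: for each $B' \ge 1$ there is an auxiliary polynomial $h \in \ZZ[x,y]$, not divisible by $g$, of degree $D \ll d'^{3}\, (B')^{1/d'}\log B' \ll d^3 B^{1/d}\log(B+d)$, vanishing at every $\ZZ^2$-point of $C'$ of height $\le B'$. Then every such point lies on $V(g) \cap V(h)$, which by Bézout is a finite set (since $g \nmid h$ and $g$ is irreducible) of at most $d' D \ll d^4 B^{1/d}\log(B+d)$ points. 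This already gives a bound of the right shape but with the wrong exponent of $d$; to extract the claimed $d^3$ I would instead bound $N_{\rm aff}(C',B')$ directly on $C'$ — i.e.\ count intersection points of $C'$ with $V(h)$ on the curve $C'$ itself rather than using the full Bézout bound in the plane. The number of such points is $\le (\deg C')(\deg h)/(\text{something})$; more precisely one uses that $C'$ is irreducible of degree $d'$ and $h$ does not vanish on $C'$, so $\#(C' \cap V(h)) \le d' D$, and a more careful bookkeeping of the degree of $h$ coming from \Cref{prop:aux.affine.poly} — which carries a factor $d'^{4-1/n}$ with $n=1$, i.e.\ $d'^{3}$ — yields the stated $d^3 B^{1/d}(\log B + d)$ after combining with the fiber bound and choosing the projection well so that $d' = \deg C' \le d$ with no extra loss.

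The main obstacle, and the place requiring the most care, is controlling the degree of the projected curve $C'$ and the size of the fibers of $\pi|_C$ simultaneously with the arithmetic of the projection, i.e.\ keeping the integer coefficients of $\pi$ small: a naive generic projection controls degrees but not heights, so one must argue (as in \cite[Proposition\,4.3.1]{CCDN-dgc}) that a projection with small integer coefficients can already be chosen generically enough, using that the "bad" projections form a proper subvariety of bounded degree in the Grassmannian, hence miss a lattice point of controlled height by a counting/pigeonhole argument. A secondary subtlety is that $\pi|_C$ need not be birational for a curve in the plane already (when $n=2$ there is nothing to do, but for $n=3$ a generic plane projection of a space curve can fail to be birational only on a proper subset), and one must ensure the exceptional locus contributes $\ll_n d^3 B^{1/d}$; this is where the hypothesis of irreducibility of $C$ is used, to guarantee $C'$ is irreducible and the generic fiber has the expected size.
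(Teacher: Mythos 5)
Your overall route---a birational linear projection with small integer coefficients onto a plane curve, followed by an auxiliary-polynomial-plus-B\'ezout count---is exactly the route behind the cited result \cite[Theorem 3]{CCDN-dgc}, which is all the paper itself invokes here (it gives no independent proof, only the remark that one combines \Cref{prop:aux.affine.poly} with the projection argument of \cite[Prop.\,4.3.1]{CCDN-dgc}). The genuine gap is in the degree bookkeeping, precisely at the step where you claim to pass from $d^4$ to $d^3$. Using the simplified bound of \Cref{prop:aux.affine.poly} at $n=1$, the auxiliary polynomial $h$ has degree $D \ll d^3 B^{1/d}\log B$, and since $C'$ is irreducible and $h$ is not divisible by its defining polynomial, the count is $\#\bigl(C'\cap V(h)\bigr) \le d' D \ll d^4 B^{1/d}\log B$. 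Your proposed remedy (``count on the curve itself rather than full B\'ezout'') is this same estimate $d'D$ restated, and the ``more careful bookkeeping'' you invoke still multiplies the factor $d'^{4-1/n}=d'^{3}$ sitting in $D$ by the factor $d'$ from the intersection, so it yields $d^4$, not $d^3$ (and $\log B$ rather than $\log B+d$); the unexplained denominator ``something'' is exactly where the needed idea is missing. The correct mechanism is to use the unsimplified degree bound displayed inside the proof of \Cref{prop:aux.affine.poly}, i.e.\ \cite[Prop.\,4.2.1]{CCDN-dgc} itself: at $n=1$ the term carrying $\log B$ comes with only $d^{2}$, and the $d^{3}$ term is free of $\log B$, so one gets $\deg h \ll d^{2} B^{1/d}(\log B + d)$, after which the B\'ezout factor $d$ lands exactly on the stated $d^{3}B^{1/d}(\log B+d)$.

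Two further points in the projection step need care. First, you only arrange $\deg C' \le d$, but you need $\deg C' = d$ exactly (equivalently, the centre of projection misses the points at infinity of the closure of $C$ and $\pi|_C$ is birational): if $d' < d$ strictly, the plane bound is $d'^{3} B'^{1/d'}(\log B' + d')$ and $B^{1/d'}$ can be far larger than $B^{1/d}$, so the target bound is not recovered---this is precisely the issue the paper flags when amending \cite[Theorem 5.9]{Pared-Sas}. Second, the admissible projections have coefficients of size $d^{O_n(1)}$, so the height distortion is $B' \ll_n d^{O_n(1)}B$ rather than $\ll_n B$; this is harmless only because $d^{O_n(1)/d}=O_n(1)$ once it sits inside $B'^{1/d}$. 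Likewise, one should not pay a blanket factor for ``generic fibres of size $\ll d$'' (that would again inflate the exponent); instead birationality makes the map injective away from the preimages of the singular points of $C'$, which contribute only $O(d^{3})$ exceptional points, absorbed into the main term.
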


In \cite{BCN-d}, the authors show that the exponent of $d$ can be improved (in both the affine and projective cases) at the cost of additional logarithmic factors.

\begin{thm}[{{\cite[Theorem 2]{BCN-d}}}]\label{thm:BCN-d}
Let $n\geq 2$ be an integer,
 let $C\subset \AA^n_{\QQ}$ be an irreducible  curve of degree $d$ and let $\overline{C}\subset \PP^n_{\QQ}$ denote its closure in projective space. Then for every integer $B \ge 2$,
\begin{flalign*}
N_{\mathrm{aff}}(C, B)\ll_n d^2B^{1/d} \log(B)^{\kappa}, \\
N(C, B)\ll_n d^2B^{2/d} \log(B)^{\kappa},
\end{flalign*}
where $\kappa$ is a universal constant.
\end{thm}

The quadratic dependence on $d$ in \Cref{thm:BCN-d} is known to be optimal (up to the $\log(B)^{\kappa}$ factor), by Section 6 of \cite{CCDN-dgc}. Recall also the Schwartz--Zippel bound, sometimes also called the trivial bound.

\begin{prop}[Schwartz-Zippel]\label{prop:schwarz--zippel}
Let $X\subset \AA^n_{\overline{\QQ}}$ be a variety of pure dimension $m$ and degree $d$ defined over $\overline{\QQ}$. Then
\[
N_{\rm aff}(X,B)\leq d(2B+1)^m.
\]
\end{prop}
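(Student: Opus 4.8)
The plan is to prove the bound by induction on $m=\dim X$, and at each stage first to reduce to the case of an irreducible variety and then to slice $X$ by the coordinate hyperplanes $\{x_n=a\}$. For the reduction: if $X=X_1\cup\cdots\cup X_k$ is the decomposition into irreducible components, each of dimension $m$ and of some degree $d_i$ with $\sum_i d_i=d$, then $N_{\rm aff}(X,B)\le\sum_i N_{\rm aff}(X_i,B)$, so a bound of the shape $d_i(2B+1)^m$ for each component sums to the desired inequality. This also settles the base case $m=0$: an irreducible variety of dimension $0$ is a single point, contributing at most $1$ to the count, whence $N_{\rm aff}(X,B)\le d$.

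For the inductive step, take $X$ irreducible of dimension $m\ge 1$ and degree $d$. Since $\dim X\ge 1$, the coordinate functions $x_1,\dots,x_n$ cannot all be constant on $X$ (otherwise $X$ would be a single point), so after relabelling I may assume $x_n$ is non-constant on $X$. For each integer $a$ with $|a|\le B$ put $X_a:=X\cap\{x_n=a\}$. Because $x_n$ is non-constant on the irreducible variety $X$, the hyperplane $\{x_n=a\}$ does not contain $X$, so $X_a$ is a proper closed subvariety of $X$; by Krull's principal ideal theorem $X_a$ is empty or of pure dimension $m-1$, and a standard Bézout-type bound for hyperplane sections gives $\deg X_a\le\deg X=d$.

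Since every integral point counted by $N_{\rm aff}(X,B)$ has integer last coordinate of absolute value at most $B$, it lies on exactly one $X_a$, and therefore
\[
N_{\rm aff}(X,B)\;=\;\sum_{a\in\ZZ,\ |a|\le B}N_{\rm aff}(X_a,B)\;\le\;(2B+1)\cdot d(2B+1)^{m-1}\;=\;d(2B+1)^m,
\]
where the middle inequality applies the induction hypothesis to each $X_a$ (viewed inside $\AA^{n-1}$, of pure dimension $m-1$ and degree at most $d$) and uses that $a$ ranges over $2B+1$ values. Summing over the irreducible components, as in the reduction step, then yields the claim for general $X$ of pure dimension $m$.

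The substance of the argument lies entirely in the two classical facts about hyperplane sections invoked above — that intersecting an irreducible variety with a hyperplane not containing it lowers the dimension by exactly one, and that it does not increase the degree (the refined Bézout inequality) — and these are the only non-formal inputs; the rest is bookkeeping, and in particular the ambient dimension $n$ never enters the bound. The one point to be careful about is to phrase the inductive hypothesis for varieties of pure dimension $m$ rather than for irreducible ones, since the slices $X_a$ arising in the step need not be irreducible.
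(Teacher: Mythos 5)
Your argument is correct: the slicing induction on dimension, with the reduction to irreducible components, Krull plus properness to get pure dimension $m-1$ for the fibres $X\cap\{x_n=a\}$, and the Bézout-type bound $\deg(X\cap\{x_n=a\})\le d$, is exactly the standard proof of this "trivial bound". The paper itself gives no argument beyond citing \cite[Sec.\,2]{Heath-Brown-Ann} and \cite[Thm.\,1]{Browning-HB}, and the proofs there proceed in essentially the same way as yours.
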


\begin{proof}
See e.g.\ ~\cite[Sec.\,2]{Heath-Brown-Ann} or~\cite[Thm.\,1]{Browning-HB}.
\end{proof}

We will use effective Noether forms throughout the paper. We recall these below.
\begin{thm}[{{{\cite[Satz.\,4]{RuppertCrelle}, \cite[Thm.\,7]{KALTOFEN1995}}}}]\label{thm:noether.forms}
Let $d\geq 2, n\geq 3$ and let $K$ be a field. Put $C_K = 2$ if $\charac K = 0$ and $C_K = 6$ if $\charac K > 0$. Then there is a collection of polynomials $\{F_\ell\}_\ell$ over $\ZZ$ in $\binom{n+d}{n}$ variables of degree $O(d^{C_K})$ such that for any homogeneous polynomial $f$ over $K$ of degree $d$ in $n+1$ variables:
\begin{itemize}
\item If $f$ is not absolutely irreducible, then each $F_\ell$ vanishes when applied to the coefficients of $f$.
\item If $f$ is absolutely irreducible, then there exists $F_\ell$ which does not vanish when applied to the coefficients of $f$.
\end{itemize}
\end{thm}

We need an effective version of Pila's result on integral points on affine varieties~\cite[Thm.\,A]{Pila-ast-1995} as below. The effective aspect here is the polynomial dependence on $d$.
\begin{prop}\label{prop:pila.hypersurface}
Let $X\subset \AA^n_{\QQ}$ be an irreducible hypersurface of degree $d$. Then
\[
N_{\rm aff}(X,B) \ll_n d^{3(n-1)} B^{n-2 + 1/d} ( \log B + d).
\]
\end{prop}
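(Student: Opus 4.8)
The goal is to bound $N_{\rm aff}(X,B)$ for an irreducible affine hypersurface $X \subset \AA^n_\QQ$ of degree $d$. The natural strategy is a projection/fibration argument: project $X$ to a lower-dimensional affine space and count the points fiberwise, using the curve count (\Cref{cor:counting.affine.curve}) at the bottom of the fibration. More precisely, I would set up a generic linear projection $\pi : \AA^n_\QQ \to \AA^{n-1}_\QQ$ so that the fibers of $\pi|_X$ over points in $\AA^{n-1}_\QQ$ are curves of degree at most $d$ (this is possible since $X$ is a hypersurface, hence of dimension $n-1$, so generic fibers under a projection to $\AA^{n-1}$ are $1$-dimensional). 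Actually the cleanest version: iterate down to a projection $\AA^n \to \AA^{n-2}$ whose fibers are affine curves of degree $\leq d$, apply \Cref{cor:counting.affine.curve} to each fiber, and multiply by the trivial count $(2B+1)^{n-2}$ on the base. This already gives roughly $d^3 \cdot B^{n-2} \cdot B^{1/d}(\log B + d)$, which is essentially the claimed bound — so the main content is controlling how the constant and the power of $d$ behave under the projection, and ensuring irreducibility/degree bounds on the fibers.

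\textbf{Key steps, in order.} First, reduce to the case where $f$ is primitive in $\ZZ[y_1,\ldots,y_n]$ (clearing denominators and content does not change $V(f)$ or the integer points). Second, choose coordinates so that $f$ is monic (up to a nonzero constant) of degree $d$ in $y_n$; after a generic $\GL_n(\ZZ)$ change of variables one may assume $f_d$ has a nonzero $y_n^d$ coefficient, at the cost of enlarging the heights of points by a factor depending only on $n$ — this is the standard trick and only affects the $O_n$ constant. Third, for each $\underline{a} = (a_1,\ldots,a_{n-2}) \in \ZZ^{n-2}$ with $|a_i| \leq c_n B$, consider the fiber curve $C_{\underline{a}} = X \cap \{y_1 = a_1, \ldots, y_{n-2} = a_{n-2}\} \subset \AA^2$. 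Generically (for $\underline a$ outside a proper subvariety of the base) this is a curve of degree $\leq d$; apply \Cref{cor:counting.affine.curve} to get $N_{\rm aff}(C_{\underline a}, B) \ll_n d^3 B^{1/d}(\log B + d)$. Fourth, handle the possibly non-reduced or reducible fibers and the exceptional locus in the base separately — either they contribute lower-dimensional pieces (absorbed by Schwartz–Zippel, \Cref{prop:schwarz--zippel}) or one slices them again. Summing over the $\ll_n B^{n-2}$ choices of $\underline a$ gives
\[
N_{\rm aff}(X,B) \ll_n B^{n-2} \cdot d^3 B^{1/d}(\log B + d) \cdot d^{?},
\]
and the extra powers of $d$ (up to $d^{3(n-1)}$) come from bounding, via Bézout, the degree of the exceptional locus in each of the $n-2$ successive slicing steps, plus the degree bounds on intermediate projections à la \cite[Prop.\,4.3.1]{CCDN-dgc}.

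\textbf{Main obstacle.} The delicate point is not the generic fiber — it is controlling the \emph{non-generic} behavior uniformly in a way that keeps the dependence on $d$ polynomial (degree $3(n-1)$ rather than exponential in $n$ or worse). Each time one fixes a coordinate $y_i = a_i$, the hypersurface can degenerate: it may become reducible, drop dimension, or acquire components of large degree, and the locus of bad $a_i$ is cut out by resultants/discriminants whose degrees grow like a power of $d$. Tracking these degrees through $n-2$ iterations, and arguing that on the bad locus one can recurse on a variety of strictly smaller dimension whose degree is still $\ll d^{O(1)}$, is where the exponent $3(n-1)$ is spent — essentially one factor $d^3$ per dimension, matching the $d^3$ in the curve bound. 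An alternative, perhaps cleaner route avoiding much of this bookkeeping is to invoke the effective Hilbert irreducibility / Bertini statements of the paper (\Cref{thm:eff.Hil.lemma:Q}, \Cref{thm:HIT.v2:Q}) to choose the successive hyperplane slices so that irreducibility is preserved at each stage, reducing directly to the irreducible curve case; but since those tools bring in $\log\|f\|$ or extra degree factors, one must check the final exponent of $d$ still comes out as $d^{3(n-1)}$. I would first attempt the direct projection argument, falling back on the Hilbert-irreducibility slicing only to handle the exceptional loci.
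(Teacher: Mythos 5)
Your overall strategy (fibre $X$ into plane curves, apply \Cref{cor:counting.affine.curve} on good fibres, absorb bad fibres by \Cref{prop:schwarz--zippel}) is the same shape as the paper's argument, but as written it has a genuine gap at exactly the point you flag as the "main obstacle", and the tools you propose there are not the right ones. First, making $f$ monic in $y_n$ after a $\GL_n(\ZZ)$ change of variables guarantees that every fibre $C_{\underline a}$ is an honest plane curve of degree $d$, but it does \emph{not} make the generic fibre irreducible: for $f=y_n^2-g(y_1)$ with $g$ squarefree of degree $d-2\ge 1$, $f$ is monic in $y_n$ and absolutely irreducible, yet every fibre of the projection onto $(y_1,\ldots,y_{n-2})$ is a union of two lines over $\overline\QQ$. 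This matters quantitatively: a fibre that is reducible over $\QQ$ can contain a rational line carrying $\asymp B$ integral points, so unless the set of bad $\underline a$ is confined to a proper subvariety of the base of degree $d^{O(1)}$, the total contribution can reach $B^{n-1}$ and the claimed bound fails. Reducibility of fibres is not detected by resultants or discriminants (those see non-reduced/singular fibres); the correct tool, which the paper uses, is the Noether forms of \Cref{thm:noether.forms}: one first disposes of the case where $f$ is irreducible over $\QQ$ but not absolutely irreducible via \cite[Cor.\,4.1.4]{CCDN-dgc} (without this reduction no choice of slicing direction can make the fibres geometrically irreducible, and controlling mere $\QQ$-reducibility of fibres is a Hilbert-irreducibility problem that drags in $\log\lVert f\rVert$, as you noticed), and then uses a Noether form both to \emph{choose} a slicing direction $a$ of height $\le d^2(d-1)$ for which the slice through the origin stays absolutely irreducible, and to bound the number of bad translates in the resulting pencil by $d^2(d-1)$.

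Concretely, the paper does not slice all the way down to curves in one step: it inducts on $n$, one hyperplane pencil $\{\ell_a(x)=k\}$ at a time. At each step the Noether form, evaluated along the pencil, is a nonzero polynomial of degree at most $d^2(d-1)$ in $k$, so at most $d^2(d-1)$ slices are geometrically reducible; those are handled by Schwartz--Zippel, and the remaining $\ll dB$ slices are absolutely irreducible hypersurfaces in one fewer variable, to which the induction hypothesis applies. This is what produces the factor $d^{3}$ per dimension and the exponent $3(n-1)$ you guessed, with the base case $n=2$ being \Cref{cor:counting.affine.curve}. So the missing ideas in your proposal are (i) the preliminary reduction to $f$ absolutely irreducible, and (ii) an effective Bertini statement via Noether forms to select the slicing direction and to bound the bad locus; once you add these, your one-step fibration could be made to work, but the paper's inductive one-hyperplane-at-a-time version keeps the bookkeeping elementary and avoids any appeal to effective Hilbert irreducibility (which, as you correctly worry, would introduce height dependence that has no place in the statement of \Cref{prop:pila.hypersurface}).
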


\begin{proof}
We prove this result by induction, where the base case is~\cite[Thm.\,3]{CCDN-dgc} for curves (i.e.\ when $n=2$).

Let $X = V(f)$ where $f\in \ZZ[x_1, \ldots, x_n]$ is irreducible of degree $d$, with $n\geq 3$. If $f$ is not absolutely irreducible, then we can immediately conclude by~\cite[Cor.\,4.1.4]{CCDN-dgc}. So assume that $f$ is absolutely irreducible and consider the polynomial
\[
f_{a,k}(x) = f(-\frac{a_2}{a_1}x_2 - \frac{a_3}{a_1}x_3 - \ldots - \frac{a_n}{a_1}x_n - k, x_2, \ldots, x_n)\in \QQ[x_2, \ldots, x_n],
\]
where $a_i, k\in \ZZ$, and $a_1 \neq 0$. By Bertini's theorem, the polynomial $f_{a,0}$ is still absolutely irreducible for generic choices of $a_i$. By using Noether forms as in~\cite[Lem.\,4.3.7]{CCDN-dgc}, see \Cref{thm:noether.forms}, we can find $a_1, \ldots, a_n$ with $|a_i|\leq d^2(d-1)$ for which $f_{a,0}$ is absolutely irreducible. Let $F$ be a Noether form as in \Cref{thm:noether.forms}
 witnessing that $f_{a,0}$ is absolutely irreducible, and consider $F$ as a polynomial in $k$ by plugging in the coefficients from $f_{a,k}$. This is a non-zero polynomial of degree at most $d^2(d-1)$ in $k$, and so setting $\ell_a(x)=\sum_i a_i x_i$, there are at most $d^2(d-1)$ values of $k$ for which the variety $
X\cap \{ \ell_a(x)=k\}
$
is reducible over $\overline{\QQ}$.
Using \Cref{prop:schwarz--zippel} if the intersection $X\cap \{ \ell_a(x)=k\}$ is reducible, and the induction assumption otherwise, we get
\begin{align*}
N_{\rm aff}(X,B)&\leq
\sum_{|k|\leq d^2(d-1)B} N_{\rm aff}\left( X\cap \{ \ell_a(x)=k\}, B\right) \\
 &\ll_n  d^3(d-1) B^{n-2} + d^2(d-1)B d^{3(n-2)}B^{n-3+1/d}(\log B + d).
\end{align*}
This proves the result.
\end{proof}

A projection argument allows us to establish the following.

\begin{prop}\label{prop:pila}
Let $X\subset \AA^n_{\QQ}$ be an irreducible variety of dimension $m$ and degree $d$. Then
\[
N_{\rm aff}(X, B) \ll_n d^{3m + 2(n-m-1)^2(m-1)} B^{m-1 + 1/d} (\log B + d).
\]
\end{prop}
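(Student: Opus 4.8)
\textbf{Proof plan for Proposition \ref{prop:pila}.}
The plan is to reduce the case of a general $m$-dimensional irreducible variety $X\subset\AA^n_\QQ$ of degree $d$ to the hypersurface case already treated in \Cref{prop:pila.hypersurface}, by a generic linear projection. First I would choose a linear projection $\pi\colon\AA^n_\QQ\to\AA^{m+1}_\QQ$ which is finite onto its image when restricted to $X$; the image $X'=\overline{\pi(X)}$ is then an irreducible hypersurface in $\AA^{m+1}_\QQ$. One must control two things along the way: the degree of $X'$, and the size of the coefficients of $\pi$ (so that the preimage of a box of size $B$ is contained in a box of size $O_n(B)$, with the implied constant absorbing the coefficients of $\pi$). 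By the basic theory of projections of projective/affine varieties (as used for \cite[Propositions 4.3.1, 4.3.2]{CCDN-dgc}), a generic such projection has $\deg X' \le \deg X = d$, and by a Noether/Bertini-type argument one can take $\pi$ with integer coefficients bounded polynomially in $d$ (with exponent depending on $n-m$), which is where the factor $d^{2(n-m-1)^2(m-1)}$ will be accounted for — each of the $n-m-1$ successive one-dimensional projections contributes, and the exponents compound.

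The core estimate is then the fiber bound: for a generic projection the map $\pi|_X\colon X\to X'$ is generically one-to-one, but on a proper closed subset $Z'\subset X'$ of dimension $\le m-1$ the fibers can be larger, of size at most $\deg X = d$. So I would split
\[
N_{\rm aff}(X,B)\le N_{\rm aff}\bigl(\pi^{-1}(X'\setminus Z')\cap X,\,O_n(B)\bigr) + N_{\rm aff}\bigl(\pi^{-1}(Z')\cap X,\,O_n(B)\bigr).
\]
The first term is bounded by $N_{\rm aff}(X',O_n(B))$, to which \Cref{prop:pila.hypersurface} applies directly in $\AA^{m+1}$, giving $\ll_n d^{3m}B^{m-1+1/d}(\log B + d)$. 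For the second term, $\pi^{-1}(Z')\cap X$ is a variety of dimension $\le m-1$ and degree $O_n(d^{?})$; here I would either recurse on $m$ (the dimension drops) or, more cheaply, cover $Z'$ by hyperplane sections and apply Schwartz--Zippel (\Cref{prop:schwarz--zippel}) together with the already-proven lower-dimensional cases, the point being that the exponent of $B$ drops to $m-2+1/d \le m-1+1/d$ so this term is lower order and only affects the power of $d$.

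The main obstacle I anticipate is bookkeeping the degree of $d$: one must verify that the degree of the projected hypersurface $X'$ stays $\le d$ (not something like $d^2$), that the coefficients of the generic projection can be bounded by $d^{O_n(1)}$ with the precise exponent $2(n-m-1)^2(m-1)$, and that the exceptional locus $Z'$ and its preimage have degree polynomial in $d$ so that the second term does not dominate. Getting the exact exponent $2(n-m-1)^2(m-1)$ requires carefully iterating the one-variable projection step $n-m-1$ times and tracking how the coefficient bound $|a_i|\le d^{O(1)}$ from the Noether-form argument (as in \cite[Lem.\,4.3.7]{CCDN-dgc}) propagates through each stage; the quadratic term $(n-m-1)^2$ suggests that at stage $j$ the relevant bound is roughly $d^{2j}$ and one sums $\sum_{j} 2j(m-1)$-type contributions. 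Everything else — the finiteness of the generic projection, the fiber bound $\le d$, and the application of \Cref{prop:pila.hypersurface} — is standard.
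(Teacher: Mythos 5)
Your plan is essentially the paper's proof: the paper simply invokes \cite[Prop.\,4.3.1]{CCDN-dgc}, which packages the generic linear projection onto a degree-$d$ hypersurface $X'\subset \AA^{m+1}_\QQ$ birational to $X$ with coefficient control, yielding $N_{\rm aff}(X,B)\le d\, N_{\rm aff}(X', O_n(1)\, d^{2(n-m-1)^2} B)$, and then applies \Cref{prop:pila.hypersurface} to $X'$. The only difference is bookkeeping: rather than your split along the exceptional locus $Z'$, the paper absorbs all fibers at once through the global factor $d$ in that inequality, so no separate treatment of $\pi^{-1}(Z')\cap X$ is required.
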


\begin{proof}
We apply~\cite[Prop.\,4.3.1]{CCDN-dgc} to obtain a hypersurface $X'\subset \AA^{m+1}_\QQ$ of degree $d$ birational to $X$ and satisfying
\[
N_{\rm aff}(X, B)\leq d N_{\rm aff}(X', O_n(1) d^{2(n-m-1)^2} B).
\]
The result then follows from the previous proposition.
\end{proof}

\section{Affine dimension growth}\label{sec:proof}

The goal of this section is to prove \Cref{thm:fd:Q,thm:fd:Q:conj}.

\subsection{Base of the induction for \Cref{thm:fd:Q}}\label{sec:base-case}

The following is the base for the induction argument for showing the affine dimension growth result of \Cref{thm:fd:Q}; it provides a more precise variant of \Cref{thm:fd:Q} when $n=3$.

\begin{prop}\label{prop:0.4strong}
There exist constants $c$ and $\kappa$ such that for all NCC polynomials $f$ in $\ZZ[y_1,y_2,y_3]$ of degree $d\geq 3$ which are irreducible over $\QQ$, and whose homogeneous degree $d$ part $f_d$ is $2$-irreducible, and for every integer $B \ge 2$, one has
$$
N_{\rm aff}(f,B)\leq c d^{9/2} B, \quad \mbox{ when $d\ge 5$},
$$
$$
N_{\rm aff}(f,B)\leq c (\log B)^\kappa B, \quad  \mbox{ when $d=4$ },
$$
and,
$$
N_{\rm aff}(f,B)\leq c (\log B)^\kappa  B^{2/\sqrt{3}} \quad  \mbox{ when $d=3$}.
$$
If $f_d$ is only  $1$-irreducible (instead of $2$-irreducible), we still have that
\begin{align*}
N_{\rm aff}(f, B) \leq c d^{9/2} B (\log B)^\kappa, \quad \mbox{ when $d\geq 4$}, \\
N_{\rm aff}(f,B)\leq c (\log B)^\kappa  B^{2/\sqrt{3}} \quad  \mbox{ when $d=3$}.
\end{align*}
\end{prop}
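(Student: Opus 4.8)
The plan is to reduce the count of integral points on the affine surface $X = V(f) \subset \AA^3$ to a count on plane curves of controlled degree, by slicing with a well-chosen pencil of planes, and then to apply the curve-counting bounds already available (\Cref{cor:counting.affine.curve}, \Cref{thm:BCN-d}) to each slice. Concretely, I would fix a linear form $\ell_a(y) = a_1 y_1 + a_2 y_2 + a_3 y_3$ with small integer coefficients and stratify the box $\{|y_i| \le B\}$ according to the value $k = \ell_a(y)$, which ranges over $O(d^?\, B)$ integers (actually $O(B)$ once $a$ is fixed). For each $k$ the slice $X \cap \{\ell_a = k\}$ is a plane curve of degree $d$, and summing the curve count over the $O(B)$ values of $k$ produces the stated bounds: the dominant term is $\sum_k d^{O(1)} B^{2/d}(\log B + d)$, which for $d \ge 5$ gives roughly $d^{O(1)} B^{1 + 2/d} (\log B + d)$ — so one sees immediately that the naive slicing by a single pencil is \emph{not} strong enough, since $B^{1+2/d}$ exceeds the target $B$.

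The fix, which I expect to be the technical heart of the argument, is the $p$-adic determinant method à la Heath-Brown/Salberger, used in tandem with the auxiliary polynomial of \Cref{prop:aux.affine.poly}: one produces an auxiliary hypersurface $g = 0$ of degree $D \ll_n d^{4-1/n} B^{1/d^{1/n}} \log B$ (with $n = 3$ here, so $D \ll d^{11/3} B^{1/d^{1/3}}\log B$) that is coprime to $f$ and vanishes on every integral point of $X$ in the box. Then every integral point of $X$ in the box lies on the curve $X \cap V(g)$, which has degree $\le dD$. Intersecting further with the pencil of planes $\{\ell_a = k\}$ and bounding the resulting zero-dimensional or one-dimensional slices by Schwartz--Zippel (\Cref{prop:schwarz--zippel}) and by the projective curve bound (\Cref{thm:BCN-d}, to get the quadratic-in-degree dependence needed for the $d^{9/2}$ exponent), one has to balance the contribution of points lying on $X \cap V(g)$ against points where the geometry degenerates. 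The NCC hypothesis enters precisely to guarantee that for a generic choice of $a$ (with $|a_i|$ bounded polynomially in $d$ by an effective Noether/Bertini argument, \Cref{thm:noether.forms}) the generic slice $X \cap \{\ell_a = k\}$ is an \emph{irreducible} plane curve of degree $d$, so that \Cref{cor:counting.affine.curve}/\Cref{thm:BCN-d} applies; the $2$-irreducibility of $f_d$ is what forces the highest-degree part of each generic slice to have no linear or quadratic factor, hence (via Bertini's irreducibility theorem) the slice is geometrically irreducible, and in the $1$-irreducible case one loses only a $(\log B)^\kappa$ factor from handling the finitely many reducible slices separately.

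The bookkeeping then goes as follows. For $d \ge 5$: take the auxiliary polynomial with the box-parameter tuned so that $D$ is roughly $B^{1/d^{1/3}}$; the number of planes in the pencil meeting a positive-dimensional component is $O(B)$, the curve count on each is $\ll d^2 (dD)^{?} \cdots$, and optimizing the exponent of $d$ across the $\binom{n+D}{n}$-type estimates and the degree $\le dD$ curve yields $c\, d^{9/2} B$ after collecting the contributions $d^{1/2}$ from \Cref{thm:BCN-d}-type inputs and $d^{4}$ from the auxiliary-polynomial degree; the $\log B$ factors cancel against the $B^{1/d}$-type savings for $d$ large. For $d = 4$ the same scheme gives $B$ but the $B^{1/d^{1/3}}$ and $\log B$ terms no longer get absorbed, leaving a residual $(\log B)^\kappa$. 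For $d = 3$ one cannot afford the auxiliary polynomial of this degree (the exponent $1/d^{1/3}$ is too large), so instead I would slice directly and use the sharper Bombieri--Pila / curve bound with exponent $B^{2/d} = B^{2/3}$ per slice, but arrange the slicing so that only $O(B^{2/\sqrt 3 - 2/3})$-many effective "directions" contribute — this is where the exponent $2/\sqrt 3$ (rather than $1 + 2/3$) comes from, via a two-parameter family of planes and an averaging/pigeonhole argument choosing the better of two counts (the $B^{1+1/d}$ trivial-plus-curve bound versus the $B^{2/d}$-per-fiber bound), the crossover being exactly at $B^{2/\sqrt 3}$.

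The main obstacle I anticipate is getting the \emph{exponent of $d$} down to $9/2$ uniformly in the $d \ge 5$ case: this requires feeding the quadratic-in-$d$ projective curve bound of \Cref{thm:BCN-d} rather than the cubic one of \Cref{cor:counting.affine.curve} into the determinant-method estimate, while simultaneously keeping the number of auxiliary hypersurfaces (the number of congruence classes / primes used) from costing more than $d^{1/2}$ — this balancing is delicate and is presumably where the $9/2 = 4 + 1/2$ split is forced. A secondary obstacle is the effective control, polynomial in $d$, on the linear form $\ell_a$: one must invoke the explicit Noether/Ruppert forms (\Cref{thm:noether.forms}) to find $a$ with $|a_i| \le d^{O(1)}$ such that the generic slice is irreducible, and separately bound the number of "bad" $k$ (for which the slice degenerates) by the degree of a Noether form evaluated along the pencil, i.e.\ by $O(d^{O(1)})$, so that the bad slices contribute only $O(d^{O(1)} B)$ via Schwartz--Zippel and do not spoil the bound.
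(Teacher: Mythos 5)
There is a genuine gap: the dominant contribution in this proposition comes from integral points on \emph{lines contained in the surface}, and your plan never addresses it. After the determinant method produces $g$ coprime to $f$ vanishing on all integral points in the box, every such point lies on the curve $C=V(f)\cap V(g)$, whose degree is $\approx d^{9/2}B^{1/\sqrt d}$; among its irreducible components there may be as many as $\deg C$ lines, each carrying up to $\sim B$ integral points, so naive counting on them gives $\approx B^{1+1/\sqrt d}$, which already exceeds the target. The paper's proof rests on a separate statement (\Cref{prop1:Brow-Heath-Salb}) showing that the union of all lines on $X$ contributes only $O(d^4B)$ (resp.\ $O(d^4B\log B)$), and this is precisely where the hypotheses are consumed: directions of lines on $X$ are rational points on the projective curve $f_d=0$, which are sparse in height when every factor of $f_d$ has degree $\ge 3$ (resp.\ $\ge 2$) by \Cref{thm:BCN-d} plus summation by parts, and the NCC condition guarantees at most $d(d-1)$ lines on $X$ in any fixed direction. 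Your proposal instead assigns NCC and the $2$-irreducibility of $f_d$ the role of making generic plane sections irreducible (that is how these hypotheses are used later, in the induction step, not here), and a fiber-by-fiber slicing argument cannot substitute for the line count, so the main term of the estimate is unaccounted for.

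There are also quantitative errors that make the claimed exponents unreachable along your route. \Cref{prop:aux.affine.poly} must be applied with $n+1=3$, i.e.\ $n=2$, giving $\deg g\ll d^{7/2}B^{1/\sqrt d}\log B$, not $n=3$ and $d^{11/3}B^{1/d^{1/3}}$ as you wrote; the exponent $9/2$ arises simply as $\deg C\le d\cdot d^{7/2}$ (and for $d>(\log B)^2$ Schwartz--Zippel on $C$ already finishes), while the $d=3$ exponent $2/\sqrt3$ arises as $B^{2/\sqrt d}$ from applying the quadratic-in-degree bound of \Cref{thm:BCN-d} to the components of $C$ of degree $>\log B$, whose total degree is $\approx d^{9/2}B^{1/\sqrt d}$ --- not from a crossover or pigeonhole between $B^{1+1/d}$ and $B^{2/d}$ over a two-parameter family of planes, which does not produce $2/\sqrt3$. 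The actual structure is: split on whether $d>(\log B)^2$; in the remaining range treat the linear components by the line-counting proposition, the components of degree between $2$ and $\log B$ by \Cref{cor:counting.affine.curve}, and the components of degree $>\log B$ by \Cref{thm:BCN-d}. The pencil-of-planes slicing that forms the backbone of your plan is neither needed nor sufficient: combined with the auxiliary polynomial it still yields at best $\deg(C)\cdot B\approx B^{1+1/\sqrt d}$ unless the linear components are isolated and counted by the geometric argument above.
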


\Cref{prop:0.4strong} improves Proposition 4.3.4 of~\cite{CCDN-dgc} with a far lower exponent for $d$, and, with a factor $\log B$ instead of $B^\varepsilon$ when $d=3,4$. Precisely, the bound with a factor $d^{18}$ coming from the proof of Proposition~4.3.4 of \cite{CCDN-dgc} now becomes the factor $d^{9/2}$ (we hereby amend the statement of Proposition~4.3.4 of \cite{CCDN-dgc} that the $d^{14}$ should be $d^{18}$, coming from its proof; this correction is now obsolete given the strengthening of \Cref{prop:0.4strong}).

To prove~\Cref{prop:0.4strong}, we need to count integral points on lines contained in our surface. In fact, the integral points on lines will typically be the main contribution to counting all integral points. Counting on lines is done in \Cref{prop1:Brow-Heath-Salb} below, which improves Proposition 4.3.3 of \cite{CCDN-dgc} and Proposition 1 of \cite{Brow-Heath-Salb} (we amend Proposition 4.3.3 of \cite{CCDN-dgc}, Proposition 7.1 of \cite{Pared-Sas} and Theorem 0.4 of \cite{Salberger-dgc}
whose proofs use the condition that $f_d$ is absolutely irreducible, and thus this condition should be mentioned in their statements).
Note that we need to impose the NCC condition, otherwise our surface contains infinitely many parallel lines.

\begin{prop}\label{prop1:Brow-Heath-Salb}
There exists a constant $c$ such that the following claim holds for every NCC polynomial    $f\in \ZZ[x_1,x_2,x_3]$ of degree $d\geq 3$ whose highest degree part $f_d$ is $2$-irreducible.
Let $I$ be any finite set of affine lines lying on the hypersurface $X=V(f)$ and let $B\geq 1$ be an integer, then
\begin{equation}\label{bound:D}
N_{\rm aff}(X  \cap ( \cup_{L \in I}  L   )  ,B) \leq c d^4 B +(\# I).
\end{equation}

If $f_d$ is only $1$-irreducible (instead of $2$-irreducible), then for every integer $B \ge 2$,
\begin{equation}\label{bound:D2}
N_{\rm aff}(X  \cap ( \cup_{L \in I}  L   )  ,B) \leq c d^4 B \log B  + (\# I).
\end{equation}

\end{prop}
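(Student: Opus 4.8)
The plan is to bound the number of lines in $I$ through each rational point of bounded height and to control the number of lines in $I$ that can contain many integer points. First I would fix a point $x \in X(\ZZ)$ with $|x_i| \le B$ and count the lines $L \in I$ through $x$. Each such line is determined by a direction $v \in \PP^2$, and, since $L \subset X = V(f)$, the direction $v$ must satisfy that the polynomial $t \mapsto f(x+tv)$ vanishes identically; looking at the top-degree coefficient, $v$ must be a zero of the binary (after projectivizing) form $f_d$ on $\PP^2$. Since $f_d$ is $2$-irreducible (in particular has no linear factor over $\QQ$), the plane curve $V(f_d) \subset \PP^2$ has no line component, so it contains at most $d^2/4$-ish, and in any case $O(d^2)$, rational points only if... — no, that is false in general, a curve can have infinitely many rational points. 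The right move is: the lines on the \emph{surface} $X$ have directions lying in $V(f_d)$, but moreover two distinct lines on $X$ with the \emph{same} direction $v$ would force $f$ to be cylindrical in that direction; the NCC hypothesis forbids a $1$-parameter family of parallel lines. So for each direction $v \in V(f_d)(\QQ)$ there are only finitely many lines in $I$ with that direction, and I would need the stronger statement that the total contribution is controlled.

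The cleaner approach, following Proposition 1 of \cite{Brow-Heath-Salb} and Proposition 4.3.3 of \cite{CCDN-dgc}: partition $I$ into lines that contain "few" integer points of height $\le B$ (say $\le c d^4 B / \#I$ — no, rather: lines with at most some threshold) and lines that contain "many". A line $L$ with at least two integer points of height $\le B$ has a primitive direction vector of height $\ll B$, and the number of its integer points of height $\le B$ is $\ll B/(\text{height of direction}) + 1$. Summing the "$+1$" over all lines gives the $\#I$ term. For the main term, I would show that the directions of lines in $I$, viewed in $\PP^2(\QQ)$, lie on the curve $V(f_d)$; since $f_d$ is $2$-irreducible this curve is a union of irreducible curves each of degree $\ge 3$ (if $1$-irreducible, degree $\ge 2$), so by \Cref{thm:BCN-d} (or \Cref{cor:counting.affine.curve}) the number of directions of height $\le H$ is $\ll d^2 H^{2/3}\log(H)^\kappa$ when $f_d$ is $2$-irreducible, and $\ll d^2 H \log(H)^\kappa$ when merely $1$-irreducible. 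Then I would sum $\sum_{v} B/\mathrm{ht}(v)$ over directions $v$ on $V(f_d)$ with $\mathrm{ht}(v) \le B$: using the counting bound and partial summation (Abel summation) over dyadic ranges of $\mathrm{ht}(v)$, the sum $\sum_v 1/\mathrm{ht}(v)$ converges geometrically when the exponent in the direction count is $< 1$, giving $\ll d^2 \cdot (\text{const})$, hence the total is $\ll d^2 B$ — but the claimed bound has $d^4$, so there is slack, and I suspect the extra $d^2$ comes from the NCC argument bounding the number of parallel lines per direction, which should contribute a factor like $d^2$ (a Wronskian/resultant argument showing $\le O(d^2)$ lines on $X$ in a fixed direction). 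In the $1$-irreducible case the exponent in the direction count is $1$ (borderline), so $\sum_v 1/\mathrm{ht}(v)$ over $\mathrm{ht}(v) \le B$ is $\ll d^2 \log B$, producing the extra $\log B$ factor in \eqref{bound:D2}.

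Concretely the steps are: (1) reduce to counting, for each line $L \in I$ with $\ge 2$ relevant integer points, the quantity $\ll B/\mathrm{ht}(v_L) + 1$ where $v_L \in \PP^2(\ZZ)$ is the primitive direction; (2) show each direction $v_L$ lies on $C := V(f_d) \subset \PP^2$, which has no linear component by $2$-irreducibility (resp.\ no "too-low-degree" component by $1$-irreducibility) — here is where $f_d$'s irreducibility hypothesis enters, exactly the point the excerpt flags as missing from earlier papers; (3) use the NCC hypothesis to bound by $O(d^2)$ the number of lines in $I$ sharing a given direction — if there were more, a pencil argument (or the fact that the set of $k$ with $X \supset \{$line in direction $v$ at "level" $k\}$ is cut by a nonzero polynomial of degree $O(d^2)$ in $k$, else cylindrical) would be violated; (4) sum over directions on $C$ of height $\le B$ using \Cref{thm:BCN-d} and Abel summation, getting the geometric (resp.\ logarithmic) behaviour. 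The main obstacle I expect is step (3)/(4) interface: making the NCC-to-"boundedly-many-parallel-lines" argument quantitative with the right power of $d$, and handling directions $v_L$ that are themselves components-worth of $C$ (if $f_d$ is $1$-irreducible it could still factor, and a conic component could carry many rational directions) — one must be careful that no irreducible component of $C$ is a line, which is guaranteed precisely by the $2$- resp.\ $1$-irreducibility plus the observation that a repeated or low-degree factor of $f_d$ corresponding to infinitely many coplanar lines would again contradict NCC. Getting the clean $c d^4 B$ (rather than $d^2 B$ times a spurious extra factor) will require tracking these constants honestly through \Cref{thm:BCN-d}.
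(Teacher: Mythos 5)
Your final plan --- splitting off lines with at most one integral point (giving the $\#I$ term), bounding each remaining line's contribution by $\ll B/H(v)+1$ with $v$ the primitive direction, noting that $v$ lies on $V(f_d)\subset\PP^2$, using NCC to show that at most $O(d^2)$ lines of $I$ share a fixed direction (the paper does this via the coefficient $c_1(a,v_0)=\sum v_{0,i}\,\partial f/\partial x_i(a)$, which is a nonzero polynomial of degree $d-1$ by NCC, so parallel lines lie on a curve of degree $d(d-1)$), and then summing $\sum_v B/H(v)$ over the rational points of each irreducible factor of $f_d$ via \Cref{thm:BCN-d} and partial summation, with convergence when $d_\ell\ge 3$ and a logarithmic loss when $d_\ell=2$ --- is exactly the paper's argument, including the bookkeeping $d^2\cdot d^2=d^4$ and the source of the $\log B$ in the $1$-irreducible case. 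So the proposal is correct and takes essentially the same approach as the paper.
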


\begin{proof}[Proof of \Cref{prop1:Brow-Heath-Salb}]
Write $I=I_{1}\cup I_2$ where $I_1=\{L\in I\mid  N_{\rm aff}(L,B)\leq 1 
\}$ and $I_2=\{L\in I\mid N_{\rm aff}(L,B)>1\}$.
It is clear that $N_{\rm aff}(X\cap\cup_{L\in I_1}L)\leq \#I_1$. If $L\in I_2$, then there exist $a=(a_1,a_2,a_3), v=(v_1,v_2,v_3)\in \ZZ^{3}$ such that $H(a):=\max_i |a_i|\leq B$, $v$ is primitive and $L(\QQ)=\{a+t v\mid t\in\QQ\}$.
Since $v$ is primitive we deduce that
$$
L(\ZZ)\cap [-B,B]^{3}=\{a+t v\mid t\in \ZZ,\  H(a+t v)\leq B\}
$$
and thus since $L\in I_2$ it follows that $H(v)\leq 2B$, so we have
\[
N_{\rm aff}(L  ,B)=\#(L(\ZZ)\cap [-B,B]^{3})
\leq 1+\dfrac{2B}{H(v)}
\le \dfrac{4B}{H(v)}.
\]
We also have $f_{|L} = 0$, therefore when we write
 $f(a+tv)=\sum\limits_{i=0}^d c_i(a,v) t^i$, it follows that
$c_i(a,v)=0$ for every $0\le i \le d$. Explicitly,
\[
f(a+t v) = t^d f_d(v)
+ t^{d-1}\underbrace{\left(\sum a_i \frac{df_d}{dx_i}(v)+f_{d-1}(v)\right)}_{:=c_{d-1}(a,v)}
+\ldots + t \underbrace{\sum v_i \frac{df}{dx_i}(a)}_{:=c_{1}(a,v)}+ f(a)\equiv 0.
\]
Let $C$ denote the curve in  $\mathbb{P}^2_{\QQ}$ cut out by $f_d$ and fix $v_0 \in C(\QQ)$. We count the number of lines in $I$ in the direction of $v_0$. Because of the NCC condition, $c_1(a,v_0)$ is a non-zero polynomial in $a$. Indeed, otherwise there exists a linear change of variables such that $f$ only depends  on $2$ variables. Hence, there are at most $d(d-1)$ lines on $X$ in the direction of $v_0$, since they must all lie on the intersection of $X$ with the surface cut out by $c_1(a,v_0) = 0$, which is a $1$-dimensional variety of degree $d(d-1)$.
Factor $f_d = \prod_\ell g_\ell$ over $\QQ$, where $g_\ell$ is irreducible of degree $d_\ell$ and set
\[
A_{\ell, i} = \{v\in \PP^{2}_\QQ(\QQ)\mid g_\ell(v) = 0,\ \text{ and } H(v) = i\},
\]
and $n_{\ell,i} = \# A_{\ell,i}$. Then by~\Cref{thm:BCN-d}, we have that
\begin{equation}\label{eq:projecurve}
\sum_{1\leq i\leq k}n_{\ell,i}\ll d_{\ell}^{2}k^{\frac{2}{d_\ell}}(\log k)^{O(1)}.
\end{equation}
At the same time, for each $i\ge 1$ the Schwartz--Zippel bound gives us
\begin{equation}\label{bound:S-Zn-i}
n_{\ell,i} \ll d_\ell i.
\end{equation}
By our discussion,
$$
N_{\rm aff}(X  \cap ( \cup_{L \in I}  L   )  ,B) \leq (\#I_1)  +d(d-1)\sum_\ell \sum_{i=1}^{2B}n_{\ell,i}\dfrac{4B}{i}.
$$
It is now left to bound $\sum_{i=1}^{2B} n_{\ell,i}\frac{B}{i}$, for each $\ell$.
Let us first assume that $f_d$ is  $2$-irreducible, so that $d_\ell\geq 3$ for every $\ell$. From Inequality (\ref{bound:S-Zn-i}) we find,
$$
\sum_{i=1}^{d_\ell}n_{\ell,i}\dfrac{4B}{i}
\ll \sum_{i=1}^{d_\ell}d_\ell i\dfrac{4B}{i}
\ll  \sum_{i=1}^{d_\ell} d_\ell B   \le  d_\ell^2 B.
$$
On the other hand, summation by parts
yields:
\begin{align}
\sum_{i=d_\ell}^{2B}n_{\ell,i} \dfrac{4B}{i}&=\sum_{k=d_\ell}^{2B-1}\left(\sum_{i=d_\ell}^{k}n_{\ell,i} \right) \left(\dfrac{4B}{k}-\dfrac{4B}{k+1}\right)+\left(\sum_{i=d_\ell}^{2B}n_{\ell,i} \right)\dfrac{4B}{2B}\\
 &\ll d_\ell^2\left( \left( \sum_{k=d_\ell}^{2B-1}k^{\frac{2}{d_\ell}} \log(k)^{O(1)} \dfrac{B}{k(k+1)} \right ) +B^{\frac{2}{d_\ell}} \log(B)^{O(1)} \right). \label{eq:parts2}
\end{align}
Now since $d_\ell\geq 3$, one has (note that $d_\ell^{1/d_\ell}$ is universally bounded):
 \begin{equation}\label{eq:sumg}
 \sum_{k\geq d_\ell} \frac{ k^{\frac{2}{d_\ell}} \log(k)^{O(1)} }{k(k+1)}\ll  \sum_{k\geq d_\ell}  \frac{ \log(k)^{O(1)}}{{k^{2-2/d_\ell}} } \ll  \frac{d_\ell^{2/d_\ell} \log(d_\ell)^{O(1)}} { d_\ell }
  \ll   d_\ell^{-1} \log(d_\ell)^{O(1)},
\end{equation} 
and hence one finds
$$
\sum_{i=d_\ell}^{2B}n_{\ell,i}\dfrac{4B}{i} \ll d_\ell\log(d_\ell)^{O(1)}B + d_\ell^2B^{2/d_\ell}\log(B)^{O(1)} \ll d_\ell^2B.
$$
Thus we conclude the correct upper bound in this case, proving the proposition when $f_d$ is $2$-irreducible.
When $f_d$ is only $1$-irreducible (instead of $2$-irreducible), one estimates (\ref{eq:parts2}) by using
$$
  \sum_{k = 1 }^{2B-1} \frac{\log(k)^{O(1)}}{k+1}\ll    \log (B)^{O(1)}
$$
instead of (\ref{eq:sumg}).
The proposition is now fully proved.
\end{proof}

\begin{remark}
\label{rem: smooth surfaces have a small amount of lines}
Let $f \in \ZZ[x_1,x_2,x_3]$ be a polynomial of degree $d \ge 3$. If the surface defined in $\mathbb{P}^3$ by the homogenization of $f$ is smooth, 
then there can be at most $cd^2$ lines lying on $X$, where $c$ is a universal constant (see~\cite{Se43,BS07}). One therefore gets an improved bound of  $cd^2 B$ in Proposition \ref{prop1:Brow-Heath-Salb} under that assumption.
\end{remark}

We can now prove \Cref{prop:0.4strong}.

\begin{proof}[Proof of Proposition \ref{prop:0.4strong}] 
We start by using \Cref{prop:aux.affine.poly} on $f$ to find an auxiliary polynomial $g$ of degree at most $O(1) d^{5/2} B^{1/\sqrt{d}}(d+\log(B))$ which is
not divisible by $f$, and which vanishes on all integral points of $V(f)$ of height at most $B$.
We now consider two cases, depending on whether $d$ or $B$ is large.

Assume first that $d > \log(B)^2$.  Then $B^{1/\sqrt{d}} = O(1)$ is universally bounded, hence $g = f = 0$ is a (possibly reducible) curve of degree at most
\[
\ll d^{9/2}.
\]
The Schwartz--Zippel bound then gives that
\[
N_{\rm aff}(f,B) \leq N_{\rm aff}(C, B)\ll d^{9/2} B.
\]

Now assume that $d \leq \log (B)^2$. Hence, $g=f=0$ is a (possibly reducible) curve $C$ of degree at most
\[
\deg(C) \ll d\cdot d^{5/2} B^{1/\sqrt{d}}(d+\log(B))= B^{1/\sqrt{d}}\log(B)^{O(1)}.
\]
By Proposition~\ref{prop1:Brow-Heath-Salb}, the contribution of the union of all linear irreducible components of $C$ is at most $O(1)d^{4}B$ if $f_d$ is $2$-irreducible, and at most $O(1)d^4 B\log (B)^{O(1)}$ if $f_d$ is $1$-irreducible (instead of $2$-irreducible). By Corollary \ref{cor:counting.affine.curve}, the total contribution of components $C_i$ of degree $ 2 \leq \deg(C_i) \leq\log B$, is bounded by
\begin{align*}
\ll \deg(C)(\log B)^3 B^{1/2}(\log (B)+\log B) &\ll   B^{1/\sqrt{d}}B^{1/2}(\log B)^{O(1)}.
\end{align*}
Distinguishing between $d\geq 5$, $d = 4$, and $d=3$, this quantity is bounded by
\begin{align*}
&\ll B, & \text{ if } d \geq 5, \\
&\ll B \log(B)^{O(1)}, & \text{ if } d =4, \\
&\ll B^{1/2 + 1/\sqrt{3}} \log(B)^{O(1)}, & \text{ if } d =3. \\
\end{align*}
Finally, for each irreducible component $C_i$ of $C$ with $d_i:=\deg(C_i) > \log(B)$, we have by~\Cref{thm:BCN-d} that
\[
N_{\rm aff}(C_i, B) \ll d_i^2 B^{1/d_i}(\log B)^{O(1)} \ll d_i^2 B^{1/\log(B)} \log(B)^{O(1)} \ll d_i^2 \log(B)^{O(1)}.
\]
Summing over all irreducible components $C_i$ of $C$ with $d_i:=\deg(C_i) > \log(B)$,
\begin{flalign*}
\sum_i N_{\rm aff}(C_i, B)
&\ll \sum_i d_i^2  \log(B)^{O(1)}
\le \log(B)^{O(1)}\Big(\sum_i d_i\Big)^2 \\
&\le \log(B)^{O(1)}\deg(C)^2
\ll  B^{2/\sqrt{d}} \log(B)^{O(1)}.
\end{flalign*}
Again, this is $O(B)$ for $d\geq 5$, while it is also bounded as desired for $d=3$ and $d=4$. This concludes the proof.
\end{proof}


\subsection{Counting on linear spaces}\label{sec:linear:sub}

In this section we prove a higher dimensional variant of \Cref{prop1:Brow-Heath-Salb} for counting integral points on $(n-2)$-spaces lying on an affine hypersurface $X\subset \AA^n$. This will be important for  obtaining an exponent of $d$ in \Cref{thm:fd:Q} which is independent of $n$. We denote by $X_\sing$ the singular locus of $X$, and by $X_\ns = X\setminus X_\sing$ the smooth locus of $X$.

\begin{prop}\label{prop:count.on.n-2.planes}
Let $n\geq 4$ and let $f\in \ZZ[x_1, \ldots, x_n]$ be an NCC polynomial of degree $d\geq 2$ which is irreducible over $\QQ$. Let $I$ be a finite collection of $(n-2)$-planes on $X=V(f)$. Then for each  integer $B \ge 2$ we have
\begin{align*}
N_{\rm aff}(X  \cap ( \cup_{L \in I}  L   )  ,B) \ll_n d^7 B^{n-2} + \#I B^{n-3}, \quad &\text{ if $f_d$ is $2$-irreducible}, \\
N_{\rm aff}(X  \cap ( \cup_{L \in I}  L   )  ,B) \ll_n d^7 B^{n-2}\log(B) + \#I B^{n-3}, \quad &\text{ if $f_d$ is $1$-irreducible}.
\end{align*}
\end{prop}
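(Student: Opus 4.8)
The plan is to reduce the higher-dimensional count to the surface case \Cref{prop1:Brow-Heath-Salb} by an iterated projection/slicing argument, exploiting that $(n-2)$-planes in $\AA^n$ are, generically, graphs over $(n-2)$-dimensional coordinate subspaces and become lines after fixing $n-3$ of the free coordinates. First I would set up coordinates: after a $\QQ$-linear change (which only costs a constant in $B$ depending on $n$, and preserves the NCC and $r$-irreducibility hypotheses on $f_d$, since these are invariant under $\GL_n(\QQ)$) one may assume that "most" of the $(n-2)$-planes $L\in I$ project isomorphically onto the coordinate plane spanned by the last two coordinates, say $\{x_1=\dots=x_{n-2}=0\}$-complement --- more precisely, project isomorphically under the coordinate projection $\pi\colon \AA^n\to\AA^{n-2}$ onto $(x_1,\ldots,x_{n-2})$. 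The planes on which $\pi$ is not an isomorphism lie in a proper subvariety and can be absorbed into the $\#I\,B^{n-3}$ error term via the trivial bound (Schwartz--Zippel, \Cref{prop:schwarz--zippel}), since each such plane then meets each fiber of $\pi$ in at most an affine line, contributing $O_n(B^{n-3})$ lattice points.

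Next, the main step: for the "good" planes, I would slice. Fix integer values $x_1=t_1,\ldots,x_{n-3}=t_{n-3}$ with $|t_i|\le B$; this leaves $O_n(B^{n-3})$ choices. For each such choice, the hypersurface $X$ restricts to a surface (or lower) $X_t=V(f_t)$ in the remaining three variables $\AA^3$, and each good $(n-2)$-plane $L$ restricts to an affine line $L_t$ on $X_t$. The key points to check are: (i) for a \emph{generic} choice of the slicing directions (again arranged by an initial linear change of coordinates, via an effective Bertini / Noether-form argument as in \Cref{thm:noether.forms} and the proof of \Cref{prop:pila.hypersurface}), the restricted polynomial $f_t$ is still NCC and its top-degree part $(f_t)_{\deg}$ is still $2$-irreducible (resp.\ $1$-irreducible); the exceptional slices where this fails are cut out by a Noether-type form of degree $O_n(d^{O(1)})$ in the $t_i$, so there are at most $O_n(d^{O(1)}B^{n-4})$ bad slices, each contributing at most $O_n(d\,B^2)$ points by Schwartz--Zippel --- this is where one must be careful that the exponent of $d$ stays $\le 7$; (ii) the degree of $f_t$ is still $\le d$ and its top part is non-vanishing for good slices. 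Then apply \Cref{prop1:Brow-Heath-Salb} to $X_t$ and the finite set $I_t=\{L_t : L\in I \text{ good}\}$: we get $N_{\rm aff}(X_t\cap\bigcup_{L_t} L_t, B)\le c\,d^4 B + \#I_t$ in the $2$-irreducible case (and with an extra $\log B$ in the $1$-irreducible case). Summing over the $O_n(B^{n-3})$ slices yields $O_n(d^4 B^{n-2}) + B^{n-3}\#I$ (plus $\log B$), which is even better than the claimed $d^7$; the weaker $d^7$ leaves room for the various exceptional-locus contributions.

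The subtle point --- and the main obstacle --- is controlling how the NCC and $r$-irreducibility conditions on the top-degree part behave under restriction to a generic slice, \emph{with explicit polynomial control in $d$ on the number of bad slices}. For NCC: $f$ NCC means $f$ genuinely depends on $\ge 3$ variables in every coordinate system; after slicing one needs the restriction to still depend on $\ge 3$ variables, equivalently that the three "first partials" $c_1$-type coefficients appearing in the expansion $f(a+tv)=\sum c_i(a,v)t^i$ do not all degenerate --- this is a non-vanishing of a determinant of linear forms in the slicing parameters, hence fails on a subvariety of degree $O_n(d)$. For $2$-irreducibility of $(f_t)_d$: by Bertini-type irreducibility (cf.\ \Cref{thm:HIT.v2:Q} / \Cref{thm:noether.forms}), a generic hyperplane section of the projective curve $C=V(f_d)\subset\PP^{n-1}$ keeps each irreducible component of degree $\ge 3$ (resp.\ $\ge 2$) irreducible and of the same degree; the bad sections form a hypersurface $W$ of degree $O_n(d^{O(1)})$ in the dual space, and one slices away from $W$. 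One has to track that the combined count of excluded slices times the trivial per-slice bound $O_n(d B^2)$ stays $\ll_n d^7 B^{n-2}$, i.e.\ the bad locus has degree $\ll_n d^{6}$ in the slicing parameters --- consistent with the $O_n(d^3)$ bound of \Cref{thm:HIT.v2:Q}, squared or so. Assembling these estimates and the bookkeeping of the at most $O_n(B^{n-3})$ good slices, each handled by \Cref{prop1:Brow-Heath-Salb}, gives the stated bounds.
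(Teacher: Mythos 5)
Your slicing strategy is genuinely different from the paper's argument (which never slices: it bounds the number of \emph{isolated} $(n-2)$-planes by $O(d^7)$ via \Cref{lem:number.of.n-2.planes}, treats each of those trivially, and counts the planes belonging to rulings as rational points on one-dimensional components of Fano schemes of $X_\infty$, whose degrees are pinched between $\delta$ and $\delta^2$, and then sums $B^{n-2}/H(L)$ exactly as in \Cref{prop1:Brow-Heath-Salb}). Unfortunately, as written your reduction cannot produce the stated bound. The decisive problem is the height of the initial linear change of coordinates. To guarantee that $f_d$ restricted to the common $3$-dimensional direction space of your slices stays $2$-irreducible (resp.\ $1$-irreducible), and that the NCC-bad locus in the translation parameters is proper, you must avoid hypersurfaces in the dual space of degree polynomial in $d$ (cf.\ \Cref{thm:HIT.v2:Q}, \Cref{thm:noether.forms}); such hypersurfaces can contain \emph{all} points of bounded height, so the change of variables has height $H\gg_n d^{O(1)}$, not $O_n(1)$. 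After the change, the box becomes of size $\asymp HB$, there are $\asymp (HB)^{n-3}$ slices, and each slice contributes $\ll d^4 HB + \#I_t$, so the total is $\ll_n d^4 H^{n-2}B^{n-2} + \#I\,(HB)^{n-3}$. With $H\asymp d^{3}$ this is $d^{3n-2}B^{n-2} + d^{3(n-3)}\#I\,B^{n-3}$: the exponent of $d$ now grows with $n$, and the $\#I$-term acquires $d$-powers, whereas the statement requires the $n$-independent exponent $7$ and a $d$-free term $\#I\,B^{n-3}$. This is exactly why, in the paper, hyperplane slicing is only used in the proof of \Cref{thm:fd:Q} in the regime $d=O_n(1)$, while \Cref{prop:count.on.n-2.planes} is proved by a different mechanism.

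Two further gaps, one repairable and one not addressed. First, your claim that planes not projecting isomorphically under $\pi$ contribute only $O_n(B^{n-3})$ each is false: for $L=\{x_1=x_2=0\}$ the projection to $(x_1,\ldots,x_{n-2})$ is degenerate and $L$ still carries $\asymp B^{n-2}$ lattice points. (This particular slip can be repaired: on a slice where the top-degree part of $f_t$ remains $1$-irreducible over $\QQ$, no rational $(n-2)$-plane can meet the slice in a $2$-plane, so all restrictions are lines and \Cref{prop1:Brow-Heath-Salb} applies; bad slices are absorbed by Schwartz--Zippel.) Second, and more seriously, the statement you rely on --- that for a suitable frame the restriction of an NCC polynomial to all but $O_n(d^{O(1)})\cdot B^{n-4}$ integral affine $3$-plane translates is again NCC, with the bad locus cut out in the $(t_1,\ldots,t_{n-3})$-space by a form of degree polynomial in $d$ --- is nowhere proved. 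The paper's \Cref{prop:preservation.H.cond} only controls one hyperplane step at a time, and the admissible linear form at step $i+1$ depends on the translate chosen at step $i$ (a priori forcing its height to grow with $B$ when one quantifies over all $\sim B$ translates), so iterating it does not give the uniform multi-parameter version you need; in characteristic zero one could try to argue directly with the rank criterion of \Cref{lem:H.cond.partial.derivatives}, but showing that some $3\times 3$ minor is a nonzero polynomial in $t$ for a frame of controlled height is a genuine lemma, not a formality.
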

We first need several lemmas.
\begin{lem}\label{lem:number.of.planes.through.point}
Let $n \ge 3$.
Let $X\subset \PP^n$ be a geometrically irreducible hypersurface of degree $d\geq 2$ and let $x\in X$. Assume that $X$ is not a cone with cone point $x$. Then $X$ contains at most $d^2$ linear spaces of dimension $(n-2)$ through $x$. If $x$ is furthermore a smooth point of $X$, this number is at most $d$.
\end{lem}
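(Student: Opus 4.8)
The plan is to pass to an algebraic closure of the base field and to \emph{linearize} the problem by projecting from $x$. Choose homogeneous coordinates on $\PP^n$ so that $x=(1:0:\cdots:0)$, write $X=V(F)$ with $F$ homogeneous of degree $d$, and expand
$F=\sum_{i=0}^{d}X_0^{\,d-i}\Phi_i(X_1,\ldots,X_n)$ with $\Phi_i$ homogeneous of degree $i$. Then $\Phi_0=F(x)=0$ since $x\in X$, and $\Phi_d\neq 0$ since otherwise $X_0\mid F$, contradicting geometric irreducibility ($d\ge 2$). Recall that $X$ is a cone with cone point $x$ precisely when $\Phi_i=0$ for all $i<d$; hence our hypothesis forces $\Pi:=\min\{\,i\ge 1:\Phi_i\neq 0\,\}$ to satisfy $1\le\Pi\le d-1$. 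Projection from $x$ identifies the linear $(n-2)$-planes $L\subset\PP^n$ through $x$ with the codimension $2$ linear subspaces $M\subset\PP^{n-1}$ (coordinates $X_1,\ldots,X_n$), by $M=\overline{\pi_x(L\setminus\{x\})}$ and $L=\overline{\langle x,M\rangle}$; and since $F(1,tq)=\sum_i t^i\Phi_i(q)$, one has $L\subseteq X$ if and only if $M\subseteq Y_x:=V(\Phi_1,\ldots,\Phi_d)\subset\PP^{n-1}$. So it is enough to bound the number of codimension $2$ linear subspaces of $Y_x$ by $d\Pi$ (which is $\le d^2$, and equals $d$ when $x$ is smooth on $X$, because then $\Phi_1\neq 0$, i.e.\ $\Pi=1$).

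The crucial step is the claim that \emph{no irreducible component of the hypersurface $V(\Phi_\Pi)\subset\PP^{n-1}$ is contained in $Y_x$}. If $W=V(p)$ were such a component, with $p$ irreducible of degree $e=\deg W\le\deg\Phi_\Pi=\Pi$, then $W\subseteq V(\Phi_i)$ for every $i$, so $p\mid\Phi_i$ whenever $\Phi_i\neq 0$, and therefore $p\mid F=\sum_i X_0^{\,d-i}\Phi_i$. But $p$ is a nonconstant polynomial in $X_1,\ldots,X_n$ of degree $e\le\Pi\le d-1<d=\deg F$, contradicting the geometric irreducibility of $F$. This is the only point at which the cone hypothesis is used: it is exactly what guarantees $\Pi<d$, and hence $e<d$.

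Finally, given a codimension $2$ linear subspace $M\subseteq Y_x$, it lies in $V(\Phi_\Pi)$, hence (being irreducible) in a single component $W$ of $V(\Phi_\Pi)$. By the claim there is an index $i$ with $W\not\subseteq V(\Phi_i)$, so $Z:=W\cap V(\Phi_i)$ is cut out in the irreducible $(n-2)$-dimensional variety $W$ by the nonzero function $\Phi_i|_W$, whence $Z$ is pure of dimension $n-3$ with $\deg Z\le\deg W\cdot\deg\Phi_i\le d\deg W$ by Bézout. As $M\subseteq Z$ is irreducible of dimension $n-3=\dim Z$, it equals one of the at most $\deg Z$ irreducible components of $Z$. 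Summing over the components $W$ of $V(\Phi_\Pi)$ bounds the number of such $M$ by $\sum_W d\deg W\le d\deg\Phi_\Pi=d\Pi\le d^2$, and by $d$ when $x$ is smooth. The main obstacle here is conceptual rather than computational: seeing that projecting from $x$ reduces everything to counting linear subspaces inside the explicit scheme $Y_x$, and that the cone hypothesis is precisely what makes the divisibility argument forbidding components of $V(\Phi_\Pi)$ inside $Y_x$ work; after that, the count is a routine use of Bézout together with the remark that a linear space of the same dimension as a variety containing it must be a component of that variety.
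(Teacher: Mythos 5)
Your proof is correct and is essentially the paper's argument carried out in explicit coordinates: $V(\Phi_\Pi)$ is exactly the (projectivized) tangent cone of $X$ at $x$, the non-cone hypothesis is what makes the relevant intersection proper so that B\'ezout applies, and each candidate $(n-2)$-plane is then a component of a variety of controlled degree. The paper compresses this to: every such plane lies in the tangent cone $T$, and $T\cap X$ has pure dimension $n-2$ and degree at most $d^2$ (at most $d$ when $x$ is smooth); your coordinate version even yields the marginally sharper bound $d\cdot\mathrm{mult}_x(X)\le d(d-1)$.
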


\begin{proof}
Any such plane must be contained in the tangent cone $T_x$ of $X$ at $x$. If $X$ is not a cone with cone point $x$, then the intersection $T_x\cap X$ is of dimension $n-2$ and degree at most $d^2$. If $x$ is a smooth point, then the tangent cone is simply the tangent space, and the intersection $T_x\cap X$ is of degree $d$.
\end{proof}

In the situation of the proposition, we say that $X = V(f)$ is \emph{$(n-2)$-ruled} if $X$ is the union of all $(n-2)$-planes contained in it. Equivalently, $X$ contains infinitely many $(n-2)$-planes. We first bound the number of $(n-2)$-planes on $X$ which do not come from an $(n-2)$-ruling of $X$. Let $X\subset \PP^n$ be a hypersurface and consider the Fano variety
\[
F_{n-2}(X) := \{L\in \Gr(n-1,n+1)\mid L\subset X\}
\]
of $(n-2)$-planes in $\PP^n$ contained in $X$. If $X$ is irreducible of degree $d\geq 2$ then the dimension of $F_{n-2}(X)$ is either $0$ or $1$, as a standard incidence correspondence shows. If $C$ is a one-dimensional irreducible component of $F_{n-2}(X)$, then the union of all $(n-2)$-planes from $C$ will cover $X$.

We first bound the number of $0$-dimensional irreducible components of $F_{n-2}(X)$, which are the $(n-2)$-planes on $X$ which do not belong to an $(n-2)$-ruling of $X$. We will call such a plane an \emph{isolated $(n-2)$-plane}. For this, we need some height theory over function fields, see e.g.\ \cite{lang-dioph}. Let $K$ be a function field of a smooth, projective, geometrically integral curve $C$ over $\overline{\QQ}$. For $P$ in $C(\overline{\QQ})$ let $v_P: K^\times\to \ZZ$ be the valuation corresponding to $P$. Using these valuations, we define a height function as follows. If $\tilde{f} = (f_0 : \ldots : f_n)$ is an element of $\PP^n(K)$, then let
\[
h_K(\tilde{f}) = \sum_{P\in C(\overline{\QQ})} \max_{i=0, \ldots, n}\{-v_P(f_i)\}.
\]
As usual, the product formula shows that this is well-defined. If $L/K$ is a finite extension, and $\tilde{f}\in \PP^n(L)$, then $L$ corresponds to a finite cover $D\to C$ of $C$, which we may assume to also be smooth and projective, and we define $h_L(\tilde{f})$ the same way. To ensure compatibility, we normalize this by
\[
h(\tilde{f}):=\frac{1}{[L:K]}h_L(\tilde{f}).
\]
For $f\in \overline{K}$ we let $h(f) = h(1:f)$. Note that if $f\in K$ has height $e$, then $f$ has at most $e$ zeroes on $C$. In general, if $C'$ is a geometrically integral curve, then we use the projective smooth model of $C'$ to define the height function on the function field of $C'$. Such a model exists by e.g.~\cite[Sec.\,I.6, Cor.\,6.11]{hartshorne}.

For a polynomial $F = \sum_i a_i X^i$ over $K$ in any number of variables, let $v_P(F) = \min_i v_P(a_i)$. We define $h_K(F)$ to be the height of the point consisting of the coordinates of $F$ in projective space. Gauss' lemma shows that $v_P(FG) = v_P(F) + v_P(G)$ and hence we obtain also that $h_K(FG) = h_K(F) + h_K(G)$. Also, if $(f_i)_i$ are elements of $K$ and if $S\subset C(\overline{\QQ})$ is a finite set such that all poles of the $f_i$ are contained in $S$, then $h_K(\sum_i f_i) \leq \#S \cdot \max_i \{h_K(f_i)\}$.

\begin{lem}\label{lem:number.of.n-2.planes}
Let $n \ge 3$, and let $X\subset \PP^n$ be a geometrically integral hypersurface of degree $d\geq 2$. Then $X$ contains at most $2d^7$ isolated $(n-2)$-planes.
\end{lem}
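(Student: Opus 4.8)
The plan is to reduce everything to the surface case $n = 3$, where the Fano scheme $F_{n-2}(X) = F_1(X)$ parametrizes lines on a surface $X \subset \mathbb P^3$, and then to produce the bound $O(d^7)$ on the number of isolated lines by a degree-of-incidence-variety argument combined with the height machinery just set up. First I would handle the reduction in dimension: given a geometrically integral hypersurface $X \subset \mathbb P^n$ of degree $d$ with $n \ge 4$, a generic linear subspace $\Lambda \cong \mathbb P^3$ meets $X$ in a geometrically integral surface $S$ of degree $d$ (by Bertini), and an isolated $(n-2)$-plane $L \subset X$ meets $\Lambda$ in a line contained in $S$; one checks that distinct isolated $(n-2)$-planes give distinct lines on $S$ for generic $\Lambda$, and that a line on $S$ which moves in a $1$-dimensional family on $S$ comes from an $(n-2)$-ruling of $X$ (so isolated planes map to isolated lines). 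Hence it suffices to bound by $O(d^7)$ the number of isolated lines on a geometrically integral surface $S\subset\mathbb P^3$ of degree $d$.

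For the surface case, consider the incidence correspondence inside $\mathbb G := \mathrm{Gr}(2,4)$: the condition that a line $\ell$ lie on $S = V(g)$, $\deg g = d$, is expressed by the vanishing of the $d+1$ coefficients of $g$ restricted to $\ell$, a section of $\mathrm{Sym}^d(\mathcal U^\vee)$ over $\mathbb G$ (with $\mathcal U$ the tautological rank-$2$ bundle). The zero locus $F_1(S)$ is cut out in the $4$-dimensional $\mathbb G$ by this section of a rank-$(d+1)$ bundle, so each isolated (i.e.\ $0$-dimensional) component contributes at least $1$ to the relevant intersection number, and the total number of isolated points is bounded by a Chern-class computation: $\deg F_1(S) \le$ (top Chern number of $\mathrm{Sym}^d(\mathcal U^\vee)$ up to the excess dimension), which is a polynomial in $d$. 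The crude Schubert-calculus estimate gives something like $O(d^4)$ or so for the number of isolated lines when $F_1(S)$ is everywhere $0$-dimensional; the genuine difficulty is that $F_1(S)$ may have a $1$-dimensional component (an $(n-2)$-ruling, i.e.\ $S$ contains a $1$-parameter family of lines) alongside isolated points, so the naive intersection number is absorbed by the positive-dimensional part and does not directly bound the isolated points.

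To get around this I would use the height argument sketched in the paragraph preceding the lemma. Parametrize lines on $S$ by pairs of points or by Plücker coordinates; restricting to a generic pencil of hyperplane sections of $\mathbb G$ (a curve $C'$ in $\mathbb G$) through the isolated points, an isolated line corresponds to a $K$-rational point of the Fano scheme over the function field $K$ of $C'$, and the defining equations (coefficients of $g|_\ell$, of degree $d$ in the Plücker/line coordinates) become polynomials over $K$ of controlled height $O(d)$ in the sense of $h_K$. Since such a polynomial of height $e$ has at most $e$ zeroes on the curve, and one needs to iterate through the $n$-or-so coordinates and intersect, each intersection multiplying heights, one accumulates a bound of the shape $d^{O(1)}$ — and a careful bookkeeping of how many times the degree-$d$ equations get composed (roughly: $d$ from each of a bounded number of elimination steps, with one factor $d^2$ from a tangent-cone/discriminant estimate as in Lemma on planes through a point) yields exactly $O(d^7)$. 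The main obstacle, and where I would spend the most care, is precisely this separation of the isolated locus from the ruled locus: showing that the height/degree of the cycle carved out after removing (or avoiding) the $1$-dimensional component of $F_{n-2}(X)$ is still $O(d^7)$, which is where the function-field height theory recalled before the lemma does the real work.
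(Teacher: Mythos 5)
There is a genuine gap, in fact two. First, your reduction to the surface case is not justified at the key point: you assert that if $L$ is an \emph{isolated} $(n-2)$-plane on $X$, then the line $L\cap\Lambda$ is an isolated point of $F_1(S)$ for a generic $3$-plane $\Lambda$, because ``a line on $S$ which moves in a $1$-dimensional family on $S$ comes from an $(n-2)$-ruling of $X$.'' But a one-parameter family of lines on $S=X\cap\Lambda$ need not consist of lines of the form (plane on $X$)$\,\cap\,\Lambda$ at all; a linear section of $X$ can be swept by lines that do not extend to $(n-2)$-planes of $X$, so $L\cap\Lambda$ could a priori sit inside such a family even though $L$ is isolated on $X$. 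Without ruling this out, the count of isolated lines on $S$ does not bound the count of isolated $(n-2)$-planes on $X$, and the reduction collapses.

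Second, and more seriously, the part of your plan that is supposed to produce the bound $O(d^7)$ is only a promissory note. You correctly identify that the Chern-class/excess-intersection count is absorbed by any one-dimensional component of the Fano scheme, and you then say the function-field height theory ``does the real work,'' but you never say how: the sketch with pencils of hyperplane sections of the Grassmannian and ``each intersection multiplying heights'' is not an argument, and the separation of the isolated locus from the ruled locus --- which you yourself name as the main obstacle --- is exactly what is left unproved. The paper's mechanism is concretely different: after discarding the at most $d^2$ planes inside $X_{\sing}$, one cuts $X$ by a generic $2$-plane $P$ so that every remaining isolated plane meets the geometrically integral curve $C=P\cap X$ in a single smooth point; over the function field $K$ of $C$ one restricts $f$ to the tangent space at the moving point, obtaining a polynomial $g\in\overline{K}[a_1,\dots,a_n]$ whose coefficients have height $\le d^2(d-1)$; one factors $g$ over $\overline{K}$, discards the linear factors (these are exactly the rulings, and this is how the isolated/ruled separation is achieved), and applies Noether forms to each non-linear factor $g_i$, bounding the height of the resulting element of $K$ by $\delta_i d^5$ and hence the number of points of $C$ at which a linear factor can appear by $d^6$; multiplying by the at most $d$ planes through a smooth point and adding the $d^2$ singular-locus planes gives $d^7+d^2$. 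None of these steps (tangent-space polynomial, factorization over the function field, Noether forms detecting the appearance of a linear factor, the explicit height bookkeeping) appears in your proposal, so as written it does not establish the lemma.
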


The idea for this lemma is as follows. Take a generic $2$-plane $P$ in $\PP^n$, let $C$ be the intersection $X\cap P$, and  assume that $C$ is geometrically integral. Any $(n-2)$-plane $H$ on $X$ intersects $C$ in a unique point $x$, and if this point is smooth then $H$ is contained in the tangent space $T_x X$. The idea is now that for a point $x\in C$, the intersection $X\cap T_xX$ may be described via a polynomial $g\in \overline{\QQ}(C)[a_1, \ldots, a_n]$. We control how this polynomial factors at the points of $C$ by using Noether forms, from which the result follows.

\begin{proof}[Proof of \Cref{lem:number.of.n-2.planes}]
The singular locus $X_\sing$ is contained in a subvariety of $\PP^n$ of (pure) dimension $n-2$ and degree $d^2$, and hence contains at most $d^2$ linear spaces of dimension $(n-2)$. Let $Z$ be the union of all isolated $(n-2)$-planes on $X$ which are not contained in $X_\sing$, and denote by $Y$ the closed set $Z\cap X_\sing$. This is an algebraic set of dimension at most $n-3$. We now take a generic $2$-plane $P\subset \PP^n$ such that $P\cap Y = \emptyset$, such that $P\cap X$ is geometrically integral, such that $P$ is not contained in the hyperplane $\{x_0 = 0\}$, and such  that the intersection of $\{x_0 = 0\}$ and $P$ does not contain any singular points of $X$. If $L$ is an isolated $(n-2)$-plane on $X$ which is not contained in $X_\sing$, then by construction $L$ intersects $P$ in a unique point in $X_\ns$. So it suffices to count the number of $x\in X_\ns\cap P$ for which there exists an isolated $(n-2)$-plane on $X$ through $x$. Indeed, \Cref{lem:number.of.planes.through.point} shows that the total number of isolated $(n-2)$-planes contained in $X$ is at most $d$ times this quantity plus $d^2$ (coming from $(n-2)$-planes in $X_\sing$).

Now take $x\in X_\ns\cap P$ and assume that there is some $(n-2)$-plane $L$ on $X$ through $x$. Then $L$ must also be contained in the tangent space $T_x X$. Let $f(x_0, \ldots, x_n)\in \overline{\QQ}[x_0, \ldots, x_n]$ be a defining equation for $X$, and assume without loss of generality that $\frac{\partial f}{\partial x_0} (x) \neq 0$. Define the polynomial
\[
g(x,a) := f\left(-\frac{\partial f}{\partial x_1}(x) a_1 - \ldots -\frac{\partial f}{\partial x_n}(x) a_n, \frac{\partial f}{\partial x_0}(x) a_1, \ldots, \frac{\partial f}{\partial x_0}(x) a_n\right)
\]
in $\overline{\QQ}[x_0, \ldots, x_n][a_1, \ldots, a_n]$. Then $X$ contains an $(n-2)$-plane through $x$ if and only if the polynomial $g(x,a)\in \overline{\QQ}[a_1, \ldots, a_n]$ has a linear factor in $\overline{\QQ}[a_1, \ldots, a_n]$. Let $K$ be the function field of the geometrically integral curve $C=P\cap X_{\ns}$ (over $\overline{\QQ}$). By multiplying with some negative power of $x_0$, we can consider $g$ as an element of $\overline{K}[a_1, \ldots, a_n]$. Each coefficient of $g$ is of degree at most $d(d-1)$ in $\overline{\QQ}[x]$, and hence has height at most $d^2(d-1)$ in $K$. We factor $g$ in this ring into irreducible polynomials $g_i\in \overline{K}[a_1, \ldots, a_n]$. The number of $g_i$ which are linear is equal to the number of one-dimensional irreducible components of $F_{n-2}(X)$. Since we are only considering isolated $(n-2)$-planes, we can disregard these and assume that $\deg g_i = \delta_i \geq 2$ for all $i$. We fix $i$ for the moment, and let $\{F_\ell\}_\ell$ be a collection of Noether forms for polynomials of degree $\delta_i$ from \Cref{thm:noether.forms}, for which we have that $\deg F_\ell\leq \delta_i^2$. Since $g_i$ is irreducible over $\overline{K}$, not all $F_\ell$ vanish when applied to $g_i$. So let $F_\ell$ be non-vanishing on $g_i$ and denote by $\tilde{g}\in K$ the product of all conjugates of $F_\ell(g_i)$. We bound the height of this $\tilde{g}$, which will give us an upper bound on its number of zeroes. By the discussion above, we have that $\sum_i h(g_i) \leq h(g)$ and hence $h(g_i)\leq h(g)$. Now note that all poles of all coefficients of $g$ are contained in the intersection of $C$ with the line defined by $x_0 = 0$ in $P$, which consists of at most $d$ points. By Gauss' lemma, all poles of all coefficients of the $g_i$ therefore lie above these points, and so we obtain that $h(F_\ell(g_i))\leq d \delta_i^2 d^2(d-1)$. Hence $h(\tilde{g})\leq \delta_i d^5$. Now sum over all $i$ to conclude that there are at most $d^6$ points $x$ in $C$ for which $g$ has a linear factor. For each such point, there can be at most $d$ isolated $(n-2)$-planes on $X$ containing this point, and hence we conclude that $X$ contains at most $d^7 + d^2$ isolated $(n-2)$-planes.
\end{proof}

If $X\subset \AA^n$ is an affine hypersurface of degree $d$ defined by a polynomial $f\in \QQ[x_1, \ldots, x_n]$, then we denote by $X_\infty\subset \PP^{n-1}$ the intersection of $X$ with the hyperplane at infinity. This is a projective variety of degree $d$, defined by $f_d=0$.

\begin{lem}
Let $X\subset \AA^n$ be a geometrically irreducible affine hypersurface.
 If $X$ is $(n-2)$-ruled then every irreducible component of $X_\infty$ is $(n-3)$-ruled.
\end{lem}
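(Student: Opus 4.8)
The plan is to work with the geometry at infinity. Write $X = V(f) \subset \AA^n$ with $f$ of degree $d$ and $f_d$ the $1$-irreducible top-degree part, so $X_\infty = V(f_d) \subset \PP^{n-1}$. Suppose $X$ is $(n-2)$-ruled; by the incidence-correspondence remark preceding \Cref{lem:number.of.n-2.planes}, the Fano variety $F_{n-2}(\overline{X})$ (where $\overline{X} \subset \PP^n$ is the projective closure) has a one-dimensional component, and the union of the corresponding $(n-2)$-planes covers $\overline{X}$. Each such $(n-2)$-plane $L \subset \overline{X}$ meets the hyperplane at infinity $H_\infty = \PP^{n-1}$ in an $(n-3)$-plane $L_\infty \subset X_\infty$ (it cannot be contained in $H_\infty$ since $\overline{X}$ is not a hyperplane, as $d \ge 2$). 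So $X_\infty$ is covered by the family of $(n-3)$-planes $\{L_\infty\}$; that is, $X_\infty$ is itself $(n-3)$-ruled as a (possibly reducible) variety of dimension $n-2$.

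First I would record the elementary fact that $1$-irreducibility of $f_d$ forces every irreducible component of $X_\infty = V(f_d)$ to have dimension exactly $n-2$: a $1$-irreducible polynomial has no linear factors, but more to the point, the hypersurface $V(f_d)$ in $\PP^{n-1}$ is pure of dimension $n-2$ simply because $f_d$ is a single nonzero homogeneous polynomial. (The role of $1$-irreducibility is rather to guarantee that $f_d$ is not divisible by a linear form, which matters below.) Now decompose $f_d = \prod_j h_j$ into irreducible factors over $\overline{\QQ}$, giving components $X_\infty^{(j)} = V(h_j)$, each of dimension $n-2$ and degree $\deg h_j \ge 2$ (here is where $1$-irreducibility enters: no $h_j$ is linear). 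The family of $(n-3)$-planes $\{L_\infty\}$ covering $X_\infty$ is irreducible (it is parametrized by the one-dimensional component $C$ of the Fano variety, itself irreducible, or at worst we pass to one irreducible component of it and note its planes still cover a component of $X_\infty$). A general member of an irreducible covering family of $(n-3)$-planes of $X_\infty$ meets $X_\infty$ in pure dimension $n-3$, hence lies in a single component $X_\infty^{(j_0)}$; by irreducibility of the parameter space, \emph{every} member of this family lies in that same component $X_\infty^{(j_0)}$. Therefore the union of all these planes is an irreducible closed subset of dimension $n-2$ contained in $X_\infty^{(j_0)}$, and since $X_\infty^{(j_0)}$ is also irreducible of dimension $n-2$, the union equals $X_\infty^{(j_0)}$: that component is $(n-3)$-ruled.

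The remaining — and main — point is to pass from \emph{one} component being ruled to \emph{all} of them being ruled. The key input is that $X$ is geometrically irreducible, while $X_\infty$ typically is not; one must transport the ruling across components using the ambient irreducibility of $X$. The mechanism is the following: the one-parameter family of $(n-2)$-planes ruling $\overline X$ sweeps out all of $\overline X$, and as the plane $L$ varies, its trace $L_\infty$ at infinity varies in a one-parameter family inside $X_\infty$; since $\overline X$ is irreducible and the planes $L$ are not contained in $H_\infty$, the closure of $\bigcup_L L_\infty$ must be all of $X_\infty$ (not just one component) — indeed $\bigcup_L L_\infty = \overline X \cap H_\infty = X_\infty$ set-theoretically because every point of $\overline X$ lies on some $L$. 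Hence the covering family $\{L_\infty\}$ dominates every component $X_\infty^{(j)}$. Running the dimension-and-irreducibility argument of the previous paragraph for each $j$ separately (a general $L_\infty$ meeting $X_\infty^{(j)}$ does so in dimension $n-3$, so those $L_\infty$ fill out $X_\infty^{(j)}$), we conclude that each $X_\infty^{(j)}$ is $(n-3)$-ruled. The main obstacle is precisely this last step: ensuring that the family of traces $L_\infty$ genuinely covers each component and not merely the union — this requires knowing that no component of $X_\infty$ is "missed," which follows from $\bigcup_L L_\infty = X_\infty$, itself a consequence of $\overline X$ being $(n-2)$-ruled and not equal to $H_\infty$. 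I would also double-check the degenerate possibility that some $L \subset H_\infty$; this is excluded because $\overline X$ is a hypersurface of degree $d \ge 2$, hence contains no hyperplane as a component, so no $(n-2)$-plane of the ruling can lie in $H_\infty$.
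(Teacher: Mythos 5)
Your overall route is the same as the paper's: every point of $X_\infty$ lies on an $(n-2)$-plane of the ruling of the projective closure $\overline{X}$ (via a one-dimensional component of the Fano variety and the incidence correspondence), such a plane is not contained in the hyperplane at infinity $H_\infty$, and its trace at infinity is an $(n-3)$-plane lying in the component through a suitably general point. However, there is a genuine flaw at the one step where the hypothesis on $f_d$ must enter: your justification that no plane $L$ of the ruling can lie inside $H_\infty$. You argue this ``since $\overline{X}$ is not a hyperplane, as $d\ge 2$'' and again ``because $\overline{X}$ contains no hyperplane as a component.'' This is a non sequitur: an $(n-2)$-plane $L\subset H_\infty$ has codimension $2$ in $\PP^n$, so it is neither a hyperplane of $\PP^n$ nor a candidate component of $\overline{X}$. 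What $L\subset \overline{X}\cap H_\infty = V(f_d)$ actually forces is that $L$ is a hyperplane of $H_\infty\cong\PP^{n-1}$ contained in $V(f_d)$, i.e.\ that a linear form divides $f_d$. For instance, if $f_d=x_1\,g$, then $V(x_1)\subset H_\infty$ is such an $(n-2)$-plane contained in $\overline{X}$, even though $\overline{X}$ has degree $d\ge 2$ and no hyperplane component. So the exclusion is exactly where $1$-irreducibility of $f_d$ is used (as in the paper's proof); you mention its role in passing (``$f_d$ is not divisible by a linear form, which matters below'') but never actually invoke it at this point, and as written your argument would purport to prove the statement with no hypothesis on $f_d$, with this step unsupported.

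A secondary problem is that your second and third paragraphs are in tension: in the second you conclude (by closedness of the containment loci and irreducibility of the parameter curve) that all traces $L_\infty$ of the family lie in a single component $X_\infty^{(j_0)}$, while in the third you use that these same traces cover all of $X_\infty$ and ``fill out'' every component; both cannot hold when $X_\infty$ is reducible with no component a plane at infinity, and you do not resolve this. The paper's argument avoids the factorization of $f_d$ and the parameter-space detour entirely: for each component $Y$ of $X_\infty$, pick $y\in Y$ not on any other component; the ruling of $\overline{X}$ gives an $(n-2)$-plane $L\subset\overline{X}$ through $y$; $1$-irreducibility of $f_d$ forces $L\not\subset H_\infty$; then $L\cap H_\infty$ is an $(n-3)$-plane contained in $X_\infty$, hence (being irreducible and containing $y$) contained in $Y$, so $Y$ is $(n-3)$-ruled. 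With your exclusion step repaired by invoking the hypothesis, your paragraph-three argument reduces to exactly this.
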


\begin{proof}
If $X$ is $(n-2)$-ruled, then so is its projective closure $\overline{X}$ in $\PP^n$ as it contains infinitely many $(n-2)$-planes. It follows $X_\infty$ is $(n-3)$-ruled. Let $Y$ be an irreducible component of $X_\infty$ and take $y \in Y$ which does not lie on any other irreducible components of $X_\infty$. There exists an $(n-3)$-plane contained in $X_\infty$ and passing through $y$, which must then lie on $Y$ by irreducibility. The claim now follows as $Y$ must contain infinitely many $(n-3)$-planes.
\end{proof}

Whenever we talk about the degree of a subvariety of a Grassmanian, we mean its degree after composing with the Pl\"ucker embedding.

\begin{proof}[Proof of \Cref{prop:count.on.n-2.planes}]
Recall that $X=V(f) \subset \AA^n_{\QQ}$.
By \Cref{lem:number.of.n-2.planes}, the number of isolated $(n-2)$-planes on $X$ is at most $O(d^7)$. These have at most $O_n(d^7B^{n-2})$ integral points up to height $B$, and so we can focus on the one-dimensional components of $F_{n-2}(X)$. In particular, we can assume that $X$ is $(n-2)$-ruled.

Let $Y$ be an irreducible component of $X_\infty$ of degree $\delta$, which is $(n-3)$-ruled by the previous lemma. So the Fano scheme $F_{n-3}(Y)$ of $(n-3)$-planes on $Y$ is of dimension one. Let $C\subset F_{n-3}(Y)$ be an irreducible component of $F_{n-3}(Y)$ of dimension one which $(n-3)$-rules $Y$. We claim that
\[
\delta \leq \deg C.
\]
To prove this, let $\ell$ be a generic line in $\PP^{n-1}$ for which $\ell\cap Y$ consists of $\delta$ points. Then
\[
H := \{L\in \Gr(n-2, n)\mid L\cap \ell \neq \emptyset \}
\]
is a hyperplane in $\Gr(n-2,n)$. We claim that $C$ is not contained in $H$. Suppose towards a contradiction that $C\subset H$. Then the regular map
\[
C\to \ell: L\mapsto L\cap \ell
\]
is well-defined, and the image is the finite set $\ell\cap Y$ as $C$ rules $Y$. But $C$ is irreducible, and so the image is a single point, contradicting that $\ell \cap Y$ consists of $\delta\geq 2$ points. So $C$ is not contained in $H$ and $\#H\cap C\leq \deg C$. Therefore
\[
\delta = \#\ell\cap Y\leq \#H \cap C \leq \deg C,
\]
proving the lower bound.

Conversely, we claim that the sum of the degrees of the one-dimensional components of $F_{n-3}(Y)$ is at most $\delta^2$. Denote this union of all one-dimensional components of $F_{n-3}(Y)$ by $D$. To prove the claim, we take a generic hyperplane $H$ in $\Gr(n-2,n)$, which corresponds to a line $\ell$ in $\PP^{n-1}$ as above. Then $\ell$ is also generic (it is not contained in $Y$ as $H \cap C$ is finite, and $Y$ is ruled), and so we can assume that $\ell$ intersects $Y$ in $\delta$ smooth points. If $L\in H\cap C$ then we obtain an $(n-3)$-plane on $Y$ through a point on $\ell\cap Y$. By \Cref{lem:number.of.planes.through.point} there are at most $\delta^2$ such planes, so that
\[
\#H\cap D  = \deg D\leq \delta^2.
\]
We have now proven the claim.

If $L\subset X$ is an $(n-2)$-plane, then (see e.g.~\cite[Lemma 4.5]{Browning-Q})
\[
N_{\rm aff}(L, B)\ll_n \frac{B^{n-2}}{H(L)} + B^{n-3},
\]
where $H(L)$ is the height of the corresponding point in projective space coming from the Pl\"ucker embedding of $L\cap \PP^{n-1}\in \Gr(n-2, n)$. If $H(L)> B$ then $N_{\rm aff}(L,B)\ll_n B^{n-3}$, and so it suffices to treat those $L$ with $H(L)\leq B$. Each of these gives a rational point on $F_{n-3}(Y)$ of height at most $B$, for some component $Y$ of $X_\infty$. So using \Cref{lem:number.of.planes.through.point} the count is
\begin{align*}
N_{\rm aff}(X  \cap ( \cup_{L \in I}  L   )  ,B) &\ll_n \#I B^{n-3} + d^2 \sum_{C} \sum_{\substack{L\in C(\QQ)\\ H(L)\leq B}} \frac{B^{n-2}}{H(L)},
\end{align*}
where the sum is over all $C$ which are one-dimensional components of $F_{n-3}(Y)$ for some component $Y$ of $X_\infty$. Now one proceeds exactly as in the proof of \Cref{prop1:Brow-Heath-Salb} to conclude.
\end{proof}

\subsection{Proofs of our main results}

When cutting with hyperplanes to induct on the dimension in the proof of \Cref{thm:fd:Q}, we need a way to preserve the NCC condition and $2$-irreducibility in the case where $d$ is small. This is provided by the following corollary. We postpone the proof to \Cref{sec:H.condition}, where we immediately prove it over global fields (see \Cref{cor:preserve.H.cond.with.HIT}).

\begin{cor}\label{cor:preserve.H.cond.with.HIT.Q}
Let $f \in \ZZ[x_1, \ldots, x_n]$ be of degree  $d\geq 2$ with $n\geq 4$. Assume that $f$ is NCC and that $f_d$ is  $r$-irreducible over $\QQ$ for some integer $r \ge 1$. Then there exist linearly independent linear forms $\ell, \ell'$ of height at most $O_n(d^3)$ and an integer $t$ with $|t|\leq O_n(d^3)$ such that $f_d|_{V(\ell- t\ell')}$ is $r$-irreducible, and such that there are at most $O_n(d)$ values $b\in \QQ$ for which $f|_{V(\ell- t\ell'- b)}$ is not NCC.
\end{cor}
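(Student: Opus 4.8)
The goal is to find two linear forms $\ell, \ell'$ and a small integer $t$ so that simultaneously (i) the hyperplane section $f_d|_{V(\ell - t\ell')}$ remains $r$-irreducible, and (ii) the affine hypersurface $f|_{V(\ell - t\ell' - b)}$ stays NCC for all but $O_n(d)$ values of $b$. The natural strategy is to treat the two requirements separately using two different auxiliary hypersurfaces in the dual projective space, and then choose a point avoiding both.

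First I would handle (i). Since $f_d$ is homogeneous in $n \ge 4$ variables and $r$-irreducible, \Cref{thm:HIT.v2:Q} directly gives a hypersurface $W_1 \subset (\PP^{n-1}_\QQ)^*$ of degree $O_n(d^3)$ such that any $\QQ$-linear form $\ell'' \notin W_1$ has $f_d|_{V(\ell'')}$ again $r$-irreducible over $\QQ$. So condition (i) is equivalent to $[\ell - t\ell'] \notin W_1$ as a point of the dual space.

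Next I would handle (ii), which is the real work. I want to understand for which hyperplanes $V(\ell'')$ the restriction $f|_{V(\ell'')}$ fails to be NCC, i.e. becomes cylindrical over a curve. Being cylindrical over a curve is (essentially by the remark after \Cref{defn:H-cond}) a condition that can be detected on the top-degree part together with finitely many lower-strata conditions; more robustly, it forces the rank of the "essential variable" data of $f|_{V(\ell'')}$ to drop to $2$. The key point is that "$f|_{V(\ell'')}$ is cylindrical over a curve" should cut out a proper Zariski-closed subset $W_2$ of the dual space $(\PP^{n-1}_\QQ)^*$, again of degree bounded by some $O_n(d^{O(1)})$ — indeed $O_n(d^3)$ can be arranged since the obstruction to non-cylindricity lives among conditions on partial derivatives of $f$ of bounded degree, and one shows $W_2 \neq (\PP^{n-1}_\QQ)^*$ precisely because $f$ itself is NCC (a generic hyperplane section of an NCC hypersurface in $\ge 4$ variables is again NCC — this is where $n\ge 4$ matters, so that the section still lives in $\ge 3$ variables). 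Here one can adapt the argument of \Cref{prop1:Brow-Heath-Salb} / the NCC discussion, or invoke a Bertini-type statement: the locus of bad hyperplanes is contained in a hypersurface of controlled degree. Let $W = W_1 \cup W_2$, of degree $O_n(d^3)$.

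Now I would produce the point. Pick any two linearly independent linear forms $\ell, \ell'$ with small integer coefficients (height $O_n(1)$) so that the line $\{[\ell - t\ell'] : t \in \PP^1\}$ in $(\PP^{n-1}_\QQ)^*$ is not contained in $W$ — such a line exists and can be found with bounded coefficients since $W$ is a proper subvariety and $n \ge 4$ gives us room; if a first generic choice of the pencil lies in $W$ one perturbs among $O_n(1)$ candidates. The intersection of this line with $W$ consists of at most $\deg W = O_n(d^3)$ points, so there is an integer $t$ with $|t| \le O_n(d^3)$ with $[\ell - t\ell'] \notin W$; this gives (i) immediately, and also ensures $f|_{V(\ell - t\ell')}$ is NCC. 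Finally, once $f|_{V(\ell - t\ell' )}$ is NCC, one needs that among its further affine translates $V(\ell - t\ell' - b)$, only $O_n(d)$ are non-NCC: this is a one-parameter Bertini/Noether-form argument in the single variable $b$, exactly parallel to the $c_1(a,v_0)$ argument in the proof of \Cref{prop1:Brow-Heath-Salb} and to the counting of bad $k$ via Noether forms in the proof of \Cref{prop:pila.hypersurface} — the relevant obstruction polynomial has degree $O_n(d)$ in $b$.

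\medskip

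The main obstacle is step (ii): showing that "the hyperplane section is cylindrical over a curve" is a Zariski-closed condition on the dual space of controlled degree, and that it is a \emph{proper} condition, i.e. that NCC is preserved under a generic hyperplane section when $n \ge 4$. One has to phrase cylindricity intrinsically (rank of essential variables $\le 2$) rather than through a fixed coordinate system, and argue that this rank condition is closed and bounded in degree by combining the defining data of $f$ with the Grassmannian-type incidence, then verify genericity by contradiction — if every hyperplane section of $f$ were cylindrical over a curve, $f$ itself would be, contradicting the hypothesis. All other steps are direct applications of \Cref{thm:HIT.v2:Q}, \Cref{thm:noether.forms}, and elementary intersection-theoretic degree bounds.
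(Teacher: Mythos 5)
Your overall architecture matches the paper's, which proves this corollary via its global-field version \Cref{cor:preserve.H.cond.with.HIT}: a Bertini-type bad hypersurface for $r$-irreducibility from \Cref{thm:HIT.v2:Q}, a second bad hypersurface for preservation of NCC under hyperplane sections through the origin, an intersection of a line in $(\PP^{n-1}_\QQ)^*$ with the union to produce the small integer $t$, and a one-parameter argument of degree $O_n(d)$ in $b$ for the affine translates (in the paper this last step is \Cref{prop:preservation.H.cond}(3), proved in characteristic zero via the partial-derivative criterion of \Cref{lem:H.cond.partial.derivatives}, essentially the minor argument you sketch). However, the step you yourself flag as ``the real work'' is a genuine gap: you assert, without proof, that the locus of linear forms $\ell$ for which $f|_{V(\ell)}$ fails to be NCC is contained in a hypersurface $W_2$ of degree $O_n(d^3)$, and you propose to obtain properness ``by contradiction --- if every hyperplane section of $f$ were cylindrical over a curve, $f$ itself would be.'' That implication is not at all obvious; it is essentially the content of \Cref{prop:preservation.H.cond}(1), which the paper establishes by first recasting failure of NCC as the statement that $W(f)$ is a union of hyperplanes parametrized by points of a single line in the dual space (\Cref{prop:equiv.H.cond}), and then running a case analysis: a geometrically irreducible component of $W(f)$ of degree $\ge 2$ is handled by Bertini via Noether forms (\Cref{lem:Bertini}, which is also where the exponent $3$ in $O_n(d^3)$ actually comes from), while the case of three linearly independent hyperplanes in $W(f)$ is handled directly. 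Your ``rank of essential variables $\le 2$'' reformulation is legitimate in characteristic zero, but converting it into a Zariski-closed condition on $\ell$ of controlled degree, and proving that this condition is proper, requires a genuine argument that the proposal does not supply.

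There is also a quantitative slip in the selection of the pencil: you insist that $\ell,\ell'$ have height $O_n(1)$ and propose to ``perturb among $O_n(1)$ candidates'' if the chosen line lies in $W=W_1\cup W_2$. Since $\deg W = O_n(d^3)$ can vastly exceed the number of lines spanned by forms of bounded height, nothing prevents $W$ from containing all such lines (for instance as a union of hyperplane components), so this selection can fail. The fix is what the paper does and what the statement permits: choose $\ell$ of height $O_n(d^3)$ avoiding $W$ by the Schwartz--Zippel bound; then for \emph{any} $\ell'$ linearly independent of $\ell$, the line through $\ell$ and $\ell'$ is automatically not contained in $W$ and meets it in at most $\deg W = O_n(d^3)$ points, which yields the integer $t$ with $|t|\le O_n(d^3)$ such that $\ell-t\ell'\notin W$, after which the count of bad $b$ follows from \Cref{prop:preservation.H.cond}(3).
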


We can now prove \Cref{thm:fd:Q} and \Cref{thm:fd:Q:conj}.

\begin{proof}[Proof of \Cref{thm:fd:Q}]
Recall that we are given $f\in \ZZ[x_1, \ldots, x_n]$ of degree $d\geq 3$, and we wish to bound the number of integral points $x$ of height at most $B$ satisfying the equation $f(x)=0$. We use induction on $n$, where the base case $n=3$ is given by Proposition~\ref{prop:0.4strong}. We therefore assume that $n \ge 4$, and let $\kappa = 5(n-2) = O_n(1)$, where $5n-12=5(n-2)-2$ is the constant from~\Cref{prop:pila} when $m=n-2$.

First assume that $d>\kappa^{n-1}$. Let $g$ be the auxiliary polynomial from~\Cref{prop:aux.affine.poly} applied to $f$, it has degree at most
\[
\ll_n d^{4-\frac{1}{n-1}} B^{\frac{1}{d^{1/(n-1)}}} \log B,
\]
is coprime to $f$ and vanishes on all integral points of $f=0$ of height at most $B$. Let $C$ be the variety cut by $f=g=0$, it is of dimension $n-2$ and has degree at most $O_n(1) d^{5-\frac{1}{n-1}} B^{\frac{1}{d^{1/(n-1)}}} \log B$. Let $C_i$ be the irreducible components of $C$, and denote by $d_i$ the degree of $C_i$.

Assume that $d < (\log B)^{n-1}$. By \Cref{prop:count.on.n-2.planes} the contribution to $N_{\rm aff}(f,B)$ coming from the components $C_i$ with $d_i = 1$ is at most $O_n(1) d^7 B^{n-2}$ if $f_d$ is $2$-irreducible and at most $O_n(1) d^7 B^{n-2} \log (B)$ if $f_d$ is $1$-irreducible (instead of $2$-irreducible). Using~\Cref{prop:pila}, the contribution to $N_{\rm aff}(f, B)$ coming from components $C_i$ with $2\leq \delta_i\leq \log B$ is at most
\[
O_n(1) d^{5-\frac{1}{n-1}}(\log B)^\kappa B^{n-3 + \frac{1}{2} + \frac{1}{d^{1/(n-1)}}}.
\]
By our assumption that $d> \kappa^{n-1}\geq 3^{n-1}$ this is indeed bounded by $O_n(1) B^{n-2}$. On the other hand, if $\delta_i > \log B$ then $B^{\frac{1}{\log B}} = O(1)$ and so using~\Cref{prop:pila}, the contribution from components $C_i$ with $\delta_i > \log B$ is at most
\[
O_n(1) B^{n-3} (d^{5-\frac{1}{n-1}})^\kappa B^{\frac{\kappa}{d^{1/(n-1)}}}.
\]
By our assumption that $d > \kappa^{n-1}$, this quantity is also bounded by $O_n(1) B^{n-2}$.

Now assume that $d > (\log B)^{n-1}$, but still that $d > \kappa^{n-1}$. Then the Schwartz--Zippel bound immediately gives that
\[
N_{\rm aff}(C, B) \ll_n d^{5-\frac{1}{n-1}} B^{\frac{1}{d^{1/(n-1)}}}B^{n-2} \ll_n d^{5 - \frac{1}{n-1}} B^{n-2}
\]
since $B^{1/d^{1/(n-1)}}$ is bounded by a universal constant.

Finally, assume that $d \leq \kappa^{n-1} = O_n(1)$.
By \Cref{cor:preserve.H.cond.with.HIT.Q}, we may find linearly independent $\QQ$-linear forms $\ell,\ell'$ such that after a change of coordinates  $x_n:=\ell(x)$, $x_{n-1}:=\ell'(x)$ of height $O_n(d^3) = O_n(1)$, there exists an integer $t$ of height at most $O_n(d^3)=O_n(1)$ such that the polynomial $f_d(x_1, \ldots, x_{n-1}, tx_{n-1})$ is $2$-irreducible (or $1$-irreducible if $f_d$ was) and such that there are at most $O_n(d) = O_n(1)$ values $b\in \QQ$ for which $f(x_1, \ldots, x_{n-1}, tx_{n-1} + b)$ is not NCC. Now we use the induction hypothesis for the good values of $b$ and the Schwartz--Zippel bound for the bad values of $b$ to get
\begin{align*}
N_{\rm aff}(f, B)&\leq \sum_{|b|\leq (|t|+1)B} N_{\rm aff}\left( \{f=0\}\cap \{x_n = tx_{n-1} + b\}, B\right) \\
	&\ll_n  d^3B(d^5B^{n-3}) + d^2B^{n-2} \ll_n B^{n-2}.
\end{align*}
If $f_d$ is $1$-irreducible, the induction hypothesis introduces an additional factor of the form $\log(B)^{O_n(1)}$ in the upper bound above.
\end{proof}

\begin{proof}[Proof of \Cref{thm:fd:Q:conj}] 
Let $f\in \ZZ[x_1, \ldots, x_n]$ be of degree $d$ such that $f_d$ is $1$-irreducible. We may assume that $f$ is irreducible. We induct on $n$, where the base case $n=2$ is~Conjecture \ref{conj:uniform.curve.conjecture}. So assume that $n\geq 3$. First, if $\lVert f\rVert$ is larger than $O_n\left(B^{d\binom{d+n-1}{n-1}})\right)$, then we can use~\cite[Lem.\,5]{Brow-Heath-Salb} to bound $N_{\rm aff}(f, B)$ as desired. So we may assume that $\log \lVert f\rVert \ll_n d^{O_n(1)}\log B$. By~\Cref{cor:eff:Hil} with $r=m=s=1$, we can find $t\in \ZZ$ with $|t| \leq \mathrm{poly}_{n,d}(\log \lVert f\rVert) \le \mathrm{poly}_{n,d}(\log B)$ such that $f_d(x_1, \ldots, x_{n-1}, tx_{n-1})$ is still $1$-irreducible. By induction, we have
\begin{align*}
N_{\rm aff}(f, B)&\leq \sum_{|b|\leq (|t|+1)B} N_{\rm aff}\left( \{f=0\}\cap \{x_n = tx_{n-1} + b\}, B\right) \ll_{n,d,\varepsilon} B^{n-2+\varepsilon}. \qedhere
\end{align*}
\end{proof}

The projective form of \Cref{thm:dgcdegree:proj} follows, as usual, from the affine case given by \Cref{thm:fd:Q}, 
where we sharpen the exponent of $d$, compared to Theorem 4 and Remark 4.3.8 of \cite{CCDN-dgc} (we amend Remark 4.3.8 of \cite{CCDN-dgc}: the expressions in $n$ are inaccurate, according to the amendments to Proposition~4.3.4 and Lemma 4.3.7 of \cite{CCDN-dgc}; this correction is now obsolete by our improvements provided by Theorems \ref{thm:dgcdegree:proj} and \ref{thm:fd:Q}). 



\begin{proof}[Proof of \Cref{thm:dgcdegree:proj}]
Consider the affine cone
 $X_{\rm aff}$ over $X$, which is  an affine hypersurface inside $\AA^{n+1}_{\QQ}$ (given by the same equation as $X$), and use \Cref{thm:fd:Q} for $X_{\rm aff}$. We are done since clearly
$
N(X,B)\le N_{\rm aff}(X_{\rm aff},B).
$ 
\end{proof}

\section{Global fields}\label{sec:global}

\subsection{Definitions and main results}

In this final section we show variants of most of the results from \Cref{sec:intro} for all global fields $K$ (including the positive characteristic case), with the results of the previous sections being for $K=\QQ$. In the context of global fields, uniform upper bounds on rational and integral points on varieties were first obtained over $K = \FF_q(t)$ for curves by Sedunova~\cite{Sedunova} by adapting the Bombieri--Pila method~\cite{bombieri-pila}.  For large characteristics, Cluckers, Forey and Loeser~\cite{CFL} improve upon this using model theoretic tools. In higher dimensions, the first results were obtained by Vermeulen~\cite{Vermeulen:p}, who proved the dimension growth conjecture for hypersurfaces of large degree over $\FF_q(t)$. Paredes and Sasyk~\cite{Pared-Sas} generalized this to obtain dimension growth for arbitrary projective varieties of degree $d\geq 4$ over global fields.
As in the above sections, our work improves the dependence on $d$ (see \Cref{thm:dgcdegree:proj:K}), and, more importantly, generalizes the affine situation (see \Cref{thm:fd:K,thm:final}).

We fix a global field $K$, i.e.\ a finite extension of $\QQ$ or $\FF_q(t)$ for some prime power $q$. If $K$ is a finite extension of $\FF_q(t)$ we moreover assume that $K$ is separable\footnote{Note that any global field of positive characteristic is a finite separable extension of a field isomorphic to $\FF_q(t)$ for some prime power $q$, by the existence of separating transcendental bases.} over $\FF_q(t)$ and that $\FF_q$ is the full field of constants of $K$. We denote by $k$ either $\QQ$ or $\FF_q(t)$ depending on whether $K$ is an extension of $\QQ$ or of $\FF_q(t)$. Let $d_K = [K:k]$.

Let us recall the theory of heights on $K$. We follow the same normalization as in~\cite{Pared-Sas}. Assume first that $K$ is a number field. The infinite places on $K$ come from embeddings $\sigma: K\to \CC$ which give a place $v$ via
\[
|x|_v :=  |\sigma(x)|^{n_v/d_K},
\]
where $|\cdot |$ is the usual absolute value on $\RR$ or $\CC$, and $n_v = 1$ if $\sigma(K)\subset \RR$ and $n_v = 2$ otherwise. Denote by $M_{K, \infty}$ the set of infinite places of $K$. The finite places of $K$ correspond to non-zero prime ideals of the ring of integers $\cO_K$. For such a prime ideal $\frak{p}$ we obtain a place $v$ via
\[
|x|_v := N_K(\frak{p})^{-\ord_{\frak{p}}(x) / d_K},
\]
where $N_K(\frak{p}) = \# \cO_K / \frak{p}$ is the norm of $\frak{p}$. Denote by $M_{K,\mathrm{fin}}$ the set of finite places of $K$.

Now assume that $K$ is a function field. Any place of $K$ then corresponds to a discrete valuation ring $\cO$ of $K$ containing $\FF_q$ and whose fraction field is $K$. Let $\frak{p}$ be the maximal ideal of $\cO$. Then we define a place $v$
\[
|x|_v := N_K(\frak{p})^{-\ord_{\frak{p}}(x) / d_K},
\]
where as before $N_K(\frak{p}) = \# \cO/\frak{p}$. Denote by $M_K$ the set of places of $K$. We also fix a place $v_\infty$ above the place in $\FF_q(t)$ defined by $|f|_\infty := q^{\deg f}$. The corresponding prime is denoted by $\frak{p}_\infty$. Let $M_{K, \infty} = \{v_\infty\}$ and let $M_{K, \mathrm{fin}} = M_K \setminus \{v_\infty\}$. The ring of integers of $K$ is the set of $x\in K$ for which $|x|_v\leq 1$ for all places $v\neq v_\infty$.

Let $K$ be an arbitrary global field again. With the above definitions, recall that we have a product formula, which states that
\[
\prod_{v\in M_K} |x|_v = 1,
\]
for all $x\in K$. For a point $(x_0 : \ldots : x_n)\in \PP^n(K)$ we define the height to be
\begin{flalign}
\label{eq: relative height}
H(x) := \prod_{v\in M_K} \max_i |x_i|_v,
\end{flalign}
which is well-defined by of the product formula. If $x\in K$ let $H(x) := H(1:x)$. We also consider the relative height defined as
\[
H_K(x) := H(x)^{d_K}.
\]
If $X\subset \PP^n$ is a projective variety we define
\[
N(X,B)
\]
to be the number of points $x$ on $X(K)$ with $H_K(x)\leq B$. In the affine setting, for an integer $B\geq 1$ let $[B]_{\cO_K}$ be the elements $x\in \cO_K$ such that
\[
\begin{cases}
\max\limits_{\sigma: K\hookrightarrow \CC} |\sigma(x) | \leq B^{1/d_K}, & \text{ if $K$ is a number field, or} \\
|x|_{v_\infty} \leq B^{1/d_K}, & \text{ if $K$ is a function field}.
\end{cases}
\]
We note that $\#[B]_{\cO_K} \ll_K B$.
If $X\subset \AA^n$ is an affine variety then we define
\[
N_{\rm aff}(X,B)
\]
to be the number of points in $X(K)\cap [B]_{\cO_K}^n$. These definitions ensure that if $x\in \PP^n_K(K)$ with $H_K(x)\leq B$, then there exists a point $y = (y_1, \ldots, y_{n+1})\in\AA^{n+1}_K(K)$ for which $(y_1 : \ldots : y_{n+1}) = x$ and such that $y\in [O_K(1) B]^{n+1}_{\cO_K}$, see \cite[Prop.\,2.2]{Pared-Sas}.

Our main result on dimension growth for affine hypersurfaces generalizes to the global field setting. Let $r\ge 1$ be an integer. We say that a polynomial $f$ in $K[y_1, \ldots, y_n]$ is \emph{$r$-irreducible over $K$} (abbreviated by $r$-irreducible if $K$ is clear), if $f$ does not have any factors of degree $\leq r$ over $K$. We say that $f$ is \emph{NCC over $K$} if there does not exist a $K$-linear map $\ell:\AA_K^n\to \AA_K^2$ and a curve $C$ in $\AA_K^2$ such that $V(f)=\ell^{-1}(C)$. Our main theorem in the generality of global fields is the following.

\begin{thm}\label{thm:fd:K}
Let $K$ be a global field with ring of integers $\cO_K$. Let $e = 7$ if $\charac K = 0$ and $e = 11$ if $\charac K > 0$. Given an integer $n\geq 3$, there exist constants $c = c(K,n)$ and $\kappa= \kappa(n)$ such that for all polynomials $f$ in $\cO_K[y_1, \ldots, y_n]$ of degree $d\geq 3$ such that $f$ is irreducible over $K$ and NCC, the following holds. If $f_d$ is $2$-irreducible over $K$, and its absolutely irreducible factors can be defined over the separable closure $K^{\mathrm{sep}}$ of $K$, one has for all $B\geq 2$ that
\begin{align*}
N_{\rm aff}(f, B)&\leq cd^e B^{n-2}, &\text{ if } d\geq 5, \\
N_{\rm aff}(f, B)&\leq c B^{n-2} (\log B)^\kappa, &\text{ if } d = 4, \\
N_{\rm aff}(f, B)&\leq c B^{n-3 + 2/\sqrt{3}} (\log B)^\kappa, &\text{ if } d = 3.
\end{align*}
If $f_d$ is only $1$-irreducible over $K$ (instead of $2$-irreducible), then we have that
\begin{align*}
N_{\rm aff}(f, B)&\leq cd^e B^{n-2} (\log B)^\kappa, &\text{ if } d \geq 4, \\
N_{\rm aff}(f, B)&\leq c B^{n-3 + 2/\sqrt{3}} (\log B)^\kappa, &\text{ if } d = 3.
\end{align*}
\end{thm}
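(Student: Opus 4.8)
The plan is to reduce \Cref{thm:fd:K} to the case $K = \QQ$ already handled by \Cref{thm:fd:Q}, or more precisely, to re-run the same induction on $n$ but now with the global-field versions of all the intermediate tools. The structure of the argument is identical: the base case $n = 3$ should be a global-field analogue of \Cref{prop:0.4strong}, and the induction step proceeds by cutting $X = V(f)$ with well-chosen hyperplanes of the form $x_n = t x_{n-1} + b$, applying the induction hypothesis on the good slices and the Schwartz--Zippel bound (\Cref{prop:schwarz--zippel}, which is geometric and hence insensitive to the base field) on the finitely many bad slices.

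First I would set up the global-field versions of the three key inputs. (i) An auxiliary polynomial statement generalizing \Cref{prop:aux.affine.poly}: this is already available over global fields through the work of Vermeulen~\cite{Vermeulen:p} and Paredes--Sasyk~\cite{Pared-Sas} (cf.\ the parenthetical remark in the proof of \Cref{prop:aux.affine.poly}), and the only new point is to track the dependence on $d$, which is the same as over $\QQ$. (ii) The curve-counting input, i.e.\ a global-field version of \Cref{cor:counting.affine.curve} and of \Cref{thm:BCN-d} on projective curves; again these are available (\cite{Pared-Sas}, \cite{BCN-d}) and give the same exponents in $d$. (iii) A global-field effective Hilbert irreducibility statement replacing \Cref{cor:preserve.H.cond.with.HIT.Q}: this is exactly what \Cref{cor:preserve.H.cond.with.HIT} (to be proved in \Cref{sec:H.condition} over global fields) provides, producing linear forms $\ell, \ell'$ and an integer $t$ of height $O_n(d^3)$ such that $f_d|_{V(\ell - t\ell')}$ stays $r$-irreducible over $K$ and only $O_n(d)$ values of $b$ make $f|_{V(\ell - t\ell' - b)}$ fail NCC. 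With these in hand, the proof of the induction step is a verbatim copy of the proof of \Cref{thm:fd:Q}: split into the ranges $d > \kappa^{n-1}$ (large degree: use the auxiliary polynomial and either \Cref{prop:count.on.n-2.planes} plus a global-field \Cref{prop:pila}, or Schwartz--Zippel, exactly as before) and $d \le \kappa^{n-1} = O_n(1)$ (small degree: cut with the hyperplane produced by effective HIT and use induction plus Schwartz--Zippel). The only visible change is the exponent $e$: over a positive-characteristic global field the Noether-form bound in \Cref{thm:noether.forms} has $C_K = 6$ rather than $2$, which is why $e = 11$ appears instead of $e = 7$; one simply propagates this through \Cref{lem:number.of.n-2.planes} and \Cref{prop:count.on.n-2.planes}.

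For the base case $n = 3$ I would prove the global-field analogue of \Cref{prop:0.4strong} by running the proof of \Cref{prop:0.4strong} over $K$: produce the auxiliary polynomial $g$ of degree $O(1)d^{7/2}B^{1/\sqrt d}$ coprime to $f$, intersect to get a (possibly reducible) curve $C$ of controlled degree, separate the cases $d$ large versus $B$ large, handle linear components of $C$ via the global-field version of \Cref{prop1:Brow-Heath-Salb} (which counts integral points on lines inside the NCC surface; the NCC-over-$K$ hypothesis guarantees $c_1(a, v_0)$ is a nonzero polynomial and hence only $d(d-1)$ lines in each direction, exactly as over $\QQ$), and handle the higher-degree components via \Cref{thm:BCN-d} and \Cref{cor:counting.affine.curve} over $K$. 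The summation-by-parts estimates are purely numerical and carry over unchanged, giving the stated bounds $cd^{9/2}B$, $c(\log B)^\kappa B$, $c(\log B)^\kappa B^{2/\sqrt 3}$ according to $d \ge 5$, $d = 4$, $d = 3$; the NCC-ruled-surface obstruction (a non-NCC surface may contain infinitely many parallel lines) is precisely why the hypothesis is needed and is the reason the analogous planar statement requires the NCC assumption.

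The main obstacle I anticipate is not in the counting itself but in the hypothesis ``the absolutely irreducible factors of $f_d$ can be defined over $K^{\mathrm{sep}}$'': in positive characteristic an irreducible polynomial can fail to have its absolutely irreducible factors defined over any separable extension (inseparability phenomena), and the curve-counting and Noether-form machinery is genuinely about separable behaviour. So the work is to verify that, under this separability hypothesis on $f_d$, the factorization $f_d = \prod_\ell g_\ell$ over $K$ into $K$-irreducible pieces behaves well enough that \Cref{thm:BCN-d} applies to each $g_\ell$ and that the $r$-irreducibility is preserved under the hyperplane cut coming from effective HIT; this is where one must be careful about the difference between $r$-irreducible over $K$ and over $K^{\mathrm{sep}}$, and it is the only place where positive characteristic forces a genuine change in the argument beyond bookkeeping the constant $C_K$. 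Everything else is a faithful transcription of the $\QQ$-proof with $|\cdot|$ replaced by the height $H_K$ and $\ZZ$ by $\cO_K$, using the comparison between projective and affine heights recorded after the definition of $N_{\rm aff}(X,B)$.
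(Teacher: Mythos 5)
Your proposal is essentially the paper's own proof: the base case $n=3$ is handled exactly as in \Cref{prop:1} (the global-field analogue of \Cref{prop:0.4strong}, including the remark that lines with non-$K$-rational direction have at most one $K$-point), the induction step mirrors the proof of \Cref{thm:fd:Q} using the Paredes--Sasyk auxiliary polynomial, \Cref{prop:count.on.n-2.planes}, \Cref{cor:preserve.H.cond.with.HIT} together with \Cref{prop:preservation.H.cond}, and Schwartz--Zippel, and you correctly trace $e=11$ in positive characteristic to the Noether-form degrees entering \Cref{lem:number.of.n-2.planes}. The only (harmless) inaccuracies are that over a global field the auxiliary polynomial of \cite[Thm.\,5.14]{Pared-Sas} carries $d$-exponent $4$ (resp.\ $7$ in positive characteristic) rather than the $\QQ$-exponent $7/2$, so the base case delivers $d^{7}$ (resp.\ $d^{11}$) as in \Cref{prop:1} rather than your claimed $d^{9/2}$ --- still amply sufficient for the stated theorem --- and the projective curve count \cite{BCN-d} must be replaced by its global-field analogue \cite{BinCluKat}.
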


\begin{remark}
In fact, in the final case of Theorem \ref{thm:fd:K} (with $f$ being NCC and irreducible over $K$ and $f_d$ being $1$-irreducible over $K$), one easily derives   that there is $\kappa=\kappa(n)$ and $c=c(K,n)$ such that
\begin{equation}\label{eqd2-4.1}
N_{\rm aff}(f, B)\leq cd^2 B^{n-2} (\log B)^\kappa, \text{ if } d \geq 4,   \\
\end{equation}
by furthermore applying  Theorem 6  \cite{BinCluKat}.
Indeed, by Theorem \ref{thm:fd:K} we may focus on the case with $d>\log B$, which follows immediately from Theorem 6 of \cite{BinCluKat} since $B^{1/d}$ is bounded by a constant in this case. Note that the quadratic dependence on $d$ in (\ref{eqd2-4.1}) is optimal, by \cite[Section 6]{CCDN-dgc}.
\end{remark}

\begin{remark}
Let us mention that in principle it is possible to make the dependence of $c$ on $K$ in the above result effective and explicit. If $\charac K > 0$, then this follows from the Riemann hypothesis for the function field $K$, and the resulting $c$ will only depend on the genus, degree, and field of constants of $K$. In characteristic zero, one can make the dependence explicit if one assumes the generalized Riemann hypothesis. We refer to~\cite[Rem.\,2.7]{Pared-Sas} for more information.
\end{remark}

\subsection{Effective Hilbert irreducibility}\label{sec:eff.hilbert}



In this section we prove versions of Theorems \ref{thm:eff.Hil.lemma:Q} and \ref{thm:HIT.v2:Q}  in the more general settings of global fields.
If $P$ is a polynomial, $H(P)$ is defined to be the height of the tuple of its coefficients, considered in projective space. We similarly define $H_K(P)$ as in Equation \ref{eq: relative height}.

\begin{thm}\label{thm:eff.Hil.lemma}
Let $r,n,d$ be positive integers,
 let $ \underline{T}=(T_1, \ldots, T_r),\underline{Y}=(Y_1, \ldots, Y_n)$ be tuples of variables, and let
$F\in \cO_K[\underline{T},\underline{Y}]$ be an irreducible polynomial of degree $d$ in  $K[\underline{T},\underline{Y}]$ such that $\deg_{\underline{Y}} F \geq 1$. For an integer $B \geq 1$ denote by $S_T(F, B)$ the number of $\underline{t}=(t_1,\ldots,t_r)\in [B]_{\cO_K}^r$ for which $F(\underline{t},\underline{Y})$ is reducible in $K[\underline{Y}]$. Then
\[
S_T(F, B) \ll_{K,r,n} 2^{c(r)(d+1)^n}  (\log H_K(F) + 1)^{10} B^{r- 1/2}\log(B)^{10(r-1)},
\]
where $c(r)=118+10(r-1)$.
\end{thm}
%
\begin{proof} We first prove the case $r=1$.  Set $T_1=T$, put $a = 1+\max_i \deg_{Y_i} F$ and consider the Kronecker transform of $F$
\[
\Kr(F) = F(T, Y, Y^a, \ldots, Y^{a^{n-1}})\in \cO_K[T,Y].
\]
Note that $H_K(\Kr(F)) = H_K(F)$, that $\deg_T(\Kr(F)) = \deg_T(F)$, and that
\[
\deg_{\underline{Y}} (\Kr(F)) \leq (a-1) + (a-1)a + \ldots (a-1)a^{n-1} = a^n - 1 < (d+1)^n.
\]
Let
\[
\Kr(F) = \prod_{i=1}^m Q_i(T,Y)
\]
be an irreducible factorization of $\Kr(F)$ in $K[T,Y]$. 
We have that $m\leq \deg_{Y} (\Kr (F))$ and for each $i$ that
\[
\deg_T (Q_i) \leq \deg_T (\Kr (F)), \quad \deg_{Y} (Q_i) \leq \deg_{Y} (\Kr (F)).
\]
Also, by combining \cite[Ch.3,\,Prop.\,2.4]{lang-dioph} and \cite[Lemma 2.1]{Pared-Sas:Hilbert}
we have
\[
H_K(Q_i) \ll_{K,n} H_K(Q_i) H_K(\prod_{j\not=i} Q_j) \ll_{K,n} 4^{d_K(\deg (\Kr(F))+1)^2} H_K(\Kr(F)).
\]
We now follow the proof of \cite[Lem.\,13.1.3]{fried-jarden} (see the note right after \cite[Equation (13.2)]{fried-jarden}),
which gives for each non-empty proper subset $I\subset \{1, \ldots, m\}$ a non-zero polynomial $c_I(T)\in K[T]$ such that $\deg c_I \leq \deg_T F$
  with the following property: let $t_0\in K$ and assume that for each $i = 1, \ldots, m$, the polynomial $Q_i(t_0, Y)$ is irreducible in $K[Y]$, and for each non-empty proper subset $I\subset \{1, \ldots, m\}$, we have $c_I(t_0)\neq 0$, then $F(t_0, Y_1, \ldots, Y_n)$ is irreducible in $K[Y_1, \ldots, Y_n]$.
In other words, we have that
\begin{flalign}
\tag{$*$}
S_T(F, B) \leq \sum_{i=1}^m S_T(Q_i, B) + \sum_{I \subset \{1,\ldots,m \}} \deg c_I,
\end{flalign}
where $\deg_{Y} Q_i \le \deg_{Y} \Kr(F) \le (d+1)^n$ and $\deg_T Q_i,\deg c_I \le d$.
We conclude using~\cite[Thm.\,1.1]{Pared-Sas:Hilbert} to bound $S_T(Q_i, B)$, recalling $m \le (d+1)^n$:
\[
(*)
 \ll_{K,n}  (d+1)^n2^{36(d+1)^n}(d+1)^{35n} d^{26}(\log H_K(F)+1)^{10}  (d+1)^{20n}B^{1/2}
+2^{(d+1)^n}d.
\]
%
To prove the multivariate version we proceed by induction, where the base case is $r=1$. Assume that $r>1$, and consider  specializations of $T_r$. Using the $r=1$ case we get,
\begin{flalign*}
S_{T}(F,B) \ll_K
S_{T_r}(F,B)B^{r-1}
+ \sum_{t_r \in [B]_{\cO_K}} S_{(T_1,\ldots,T_{r-1})}(F|_{T_r=t_r},B).
\end{flalign*}
Note that for every $t_r \in [B]_{\cO_K}$ we may bound the height of the specialization $F|_{T_r=t_r}$,
\[
H_K(F|_{T_r=t_r})
\ll_K (1+B+\ldots+B^d) H_K(F)
\ll_K B^d H_K(F).
\]
Using the induction hypothesis, one  verifies
\begin{flalign*}
S_T(F, B) \ll_{K,n,r}& 2^{c(1)(d+1)^n}(\log H_K(F) + 1)^{10} B^{1/2}B^{r-1} \\
&+ B 2^{c(r) (d+1)^n} (\log H_K(F)+1)^{10} B^{(r-1)-1/2} \log(B)^{10(r-1)},
\end{flalign*}
where $c(r)=118+10(r-1)$, arriving at the required bounds.
\end{proof}

%
%

We now prove a more precise version of \Cref{cor:eff:Hil} for global fields.

%
\begin{cor}\label{cor:eff:Hil.genVersion}
Let $r,n,m,d$ be positive integers,
let $\underline{T}=(T_1, \ldots, T_r), \underline{Y}=(Y_1, \ldots, Y_n)$ be tuples of variables, and let $F_1, \ldots, F_m$ be irreducible polynomials in $K[\underline{T}, \underline{Y}]$ of degree at most $d$ such that $\deg_{\underline{Y}}(F_i) \ge 1$ for each $i$.
Then there exists a polynomial $G$ of degree at most $30$ in $m$ variables whose  coefficients depend on $r,n,m,d$ such that   the following holds for every $s\ge 1$. If $X \subset \AA^r_K$ is a (possibly reducible) hypersurface of degree at most $s$, then
we have a tuple $\underline{t}\in \cO_K^r$ not contained in $X(K)$
 of height \[
 \ll_{K,n,r} s^{2}\log(s)^{20(r-1)}+G(\log H_K(F_1), \ldots, \log H_K(F_m)),
\]
such that  for each $1 \le i \le m$
 the polynomial $F_i(\underline{t}, \underline{Y})$ is irreducible over $K$.
\end{cor}
\begin{proof}
By the previous theorem, the number of $\underline{t} \in [B]_{\cO_K}^r$ for a given $B \ge 1$ for which $F_i(\underline{t}, \underline{Y})$ is reducible over $K$ is at most
\[
S_T(F_i, B) \ll_{K,n,r} 2^{O_{n,r}(d^n)}
 (\log H_K(F_i) +1)^{10}B^{r-1/2}\log(B)^{10(r-1)}.
\]
Recall that $\# [B]_{\cO_K} \ll_K B$.
Therefore, the number of $\underline{t}\in [B]_{\cO_K}^r$ for which all $F_i(\underline{t}, \underline{Y})$ are irreducible over $K$ is bounded from below  by
\[
\#[B]_{\cO_K}^r  - \sum_{i=1}^m S_T(F_i, B) \gg_{K,n,r} B^r - m B^{r-1/2} \log(B)^{10(r-1)}2^{O_{n,r}(d^n)}
\sum_{i=1}^m (\log H_K(F_i) +1)^{10}.
\]
In order to guarantee we have at least one
$\underline{t} \in [B]_{\cO_K}^r$, not lying on $X$, and such that  each specialization $F_i(\underline{t},\underline{Y})$ is
irreducible, it is enough to find $B$ satisfying the following inequality:
\begin{flalign}
\label{eq:Condition for irreducible specialization}
\tag{4.3}
\frac{B^{1/2}}{\log(B)^{10(r-1)}} &\gg_{K,n,r}  m 2^{O_{n,r}(d^n)}
\sum_{i=1}^m (\log H_K(F_i) +1)^{10}+ \frac{N_\mathrm{aff}(X,B)}{B^{r-1/2}\log(B)^{10(r-1)}}.
\end{flalign}
Using the Schwartz-Zippel bound, we have  $N_\mathrm{aff}(X,B) \ll_K s B^{r-1}$.
Note that $\frac{B^{1/2}}{\log(B)^{10(r-1)}}$ is an increasing function for $B \gg_n 1$ which is larger than a number $y$ as soon as $B=O_r(y^2 \log^{20(r-1)}(y))$.
Using Jensen's inequality on $g(y)=y^2 \log^{20(r-1)}(y)$ where $y$ is the RHS of (\ref{eq:Condition for irreducible specialization}), we conclude the desired quantity is positive, if we demand 
\begin{flalign*}
B \gg_{K,n,r} s^{2}\log(s)^{20(r-1)}+\left(m^2 2^{O_{n,r}(d^n)}
 \sum_{i=1}^m (\log H_K(F_i) +1)^{10} \right)^3. \qedhere
\end{flalign*}
\end{proof}
We now move to prove a version of Theorem \ref{thm:HIT.v2:Q} for global fields.
We need two well known lemmas that we record below.

\begin{lem}[{see e.g.~\cite[Theorem 11.2.7, Proposition 11.2.8]{Springer}}]
\label{lem:descent}
Let $K'/K$ be a Galois extension contained in $K^{\sepc}$, and let $Y$ be an affine $K'$-variety. Then $\sigma(Y) = Y$ for every $\sigma\in \Gal(K'/K)$ if and only if $Y$ can be defined over $K$.
\end{lem}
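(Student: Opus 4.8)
\textbf{Proof plan for \Cref{lem:descent}.}
The statement is a standard Galois-descent fact, and the plan is simply to assemble it from the two classical ingredients cited: the cocycle-descent criterion for varieties (Weil descent / the forms-of-a-variety principle) and the behaviour of fields of definition under Galois action. First I would dispose of the easy direction: if $Y$ is defined over $K$, i.e.\ $Y = Y_0 \times_{\Spec K} \Spec K'$ for some $K$-variety $Y_0$, then each $\sigma \in \Gal(K'/K)$ acts on $Y$ only through the second factor $\Spec K'$, so $\sigma(Y) = Y$ as a subvariety of the ambient affine space (the defining ideal has coefficients in $K$, hence is $\sigma$-stable). For the converse, which is the substantive direction, I would invoke \cite[Theorem 11.2.7]{Springer}: the hypothesis $\sigma(Y) = Y$ for all $\sigma \in \Gal(K'/K)$ says precisely that the tautological family of isomorphisms $\varphi_\sigma : \sigma(Y) \xrightarrow{\sim} Y$ (here all equal to the identity on the ambient space) satisfies the cocycle condition $\varphi_{\sigma\tau} = \varphi_\sigma \circ \sigma(\varphi_\tau)$, which is trivially true here since all the $\varphi_\sigma$ are identities. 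Then Proposition 11.2.8 (effectivity of descent for quasi-projective, in particular affine, varieties) produces a $K$-form $Y_0$ of $Y$ together with a $K'$-isomorphism $Y_0 \times_K K' \cong Y$ compatible with the descent datum; unravelling the compatibility with the trivial descent datum shows this isomorphism is the identity on the ambient affine space, so in fact $Y$ is cut out by equations over $K$.

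The only point requiring a little care is that $Y$ is presented as a subvariety of affine space, not merely an abstract variety, so I want the conclusion "$Y$ can be defined over $K$" to mean its ideal $I(Y)$ in $K'[x_1,\dots,x_n]$ is generated by polynomials over $K$; this is why I emphasize tracking that the descent isomorphism is the identity on the ambient space rather than just an abstract $K$-isomorphism. Concretely one can also argue directly: $I(Y) \subseteq K'[x_1,\dots,x_n]$ is stable under the induced $\Gal(K'/K)$-action on coefficients by the hypothesis, and for a finite Galois (or, in the infinite separable-closure case, pro-finite) extension $K'/K$ a $\Gal(K'/K)$-stable $K'$-subspace of a $K'$-vector space obtained by base change from $K$ is spanned by its $K$-rational points (Galois descent for vector spaces, $H^1(\Gal, \GL) = 1$); applying this to each graded piece of the ideal gives generators over $K$. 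I would likely include this direct linear-algebra argument as the actual proof, since it is self-contained and avoids quoting the variety-level descent machinery, mentioning Springer only as a reference for readers who prefer the geometric formulation.

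The main (and really only) obstacle is a bookkeeping one: ensuring the Galois action referred to in the hypothesis "$\sigma(Y) = Y$" is the one induced on closed subschemes of $\AA^n_{K'}$ by functoriality (equivalently, acting on coefficients of defining equations), and that "can be defined over $K$" is interpreted so that the two statements are genuinely equivalent rather than one being strictly stronger. Once the definitions are pinned down, the proof is immediate from either the cited results or the one-line descent-for-vector-spaces argument applied degree by degree to the homogeneous components of $I(Y)$. No serious estimates or constructions are involved.
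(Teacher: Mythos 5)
Your proposal is correct. Note that the paper does not prove this lemma at all: it is quoted as a standard Galois-descent fact with a pointer to Springer's book, so there is no "paper proof" to match against. Your plan covers both routes: the citation-based one (trivial descent datum given by the identity maps, effectivity for affine/quasi-projective varieties) and a self-contained argument via descent for vector spaces applied to the defining ideal. The latter is the right thing to make explicit here, since in the paper's applications (lines in $(\PP^{n-1}_K)^*$, absolutely irreducible components $Z_i$ of $V(f_d)$) "defined over $K$" really does mean that $I(Y)\subseteq K'[x_1,\dots,x_n]$ is generated by polynomials with coefficients in $K$, and your ideal-level bookkeeping delivers exactly that, whereas the abstract-form statement alone would require the extra unravelling you describe. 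Two minor points you have essentially already handled but should state cleanly if you write this up: (i) $K'/K$ may be infinite, but each finite-dimensional truncation $I(Y)_{\le d}$ is spanned by elements whose coefficients lie in finite Galois subextensions, and stability passes to $I(Y)_{\le d}\cap K''[x]_{\le d}$ for each finite Galois $K''/K$, so the finite-extension descent suffices; (ii) since $K'\subseteq K^{\sepc}$, there is no inseparability issue, so "the ideal is generated over $K$" and "$Y$ is a base change of a $K$-variety" agree, which is why the lemma is stated for the separable closure in the first place.
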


\begin{lem}
\label{lem:Bertini}
Let $n\ge 3$ and let $X \subset \PP^n$ be a geometrically irreducible hypersurface of degree $d$.
Furthermore set $C_K:=3$ if $\charac(K)=0$ and $C_K:=7$ if $\charac(K)>0$.
There exists a non-zero homogeneous form $ F
\in \ZZ[x_0,
\ldots,x_n]$ of degree at most $12(n+1)d^{C_K}$ such that if $V(\ell) \cap X$ is not geometrically irreducible for a linear form $\ell \in (\PP^{n})^*$, then
$F(\ell) =0$, where we reduce $F$ modulo $\charac(K)$ in the case of positive characteristic.
\end{lem}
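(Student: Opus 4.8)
The plan is to turn Bertini's irreducibility theorem into an effective statement by detecting absolute irreducibility of the restriction of $f$ to a hyperplane with a Noether form from \Cref{thm:noether.forms}, and then reading off the degree. First I would dispose of $d=1$: a hyperplane section of a hyperplane is a linear space, hence always geometrically irreducible, so any nonzero linear form (say $F=x_0$) works. From now on assume $d\ge 2$ and, replacing $f$ by its reduced equation, that $f$ is an absolutely irreducible homogeneous polynomial of degree $d$ in $x_0,\dots,x_n$, so $\dim X=n-1\ge 2$ and, since $\ell\nmid f$ for any linear form $\ell$, one has $\dim\big(X\cap V(\ell)\big)=n-2$ for every $\ell$.

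Next I would introduce the coefficient-extraction map. On the standard chart $\{a_0\ne 0\}$ of $(\PP^n)^*$, with coordinates $a=(a_0,\dots,a_n)$ recording the coefficients of $\ell=\sum_i a_i x_i$, set
\[
g(a;x_1,\dots,x_n)\ :=\ a_0^{\,d}\,f\!\left(-a_0^{-1}(a_1x_1+\dots+a_nx_n),\,x_1,\dots,x_n\right).
\]
A direct expansion shows that $g$ is homogeneous of degree $d$ in the $x_i$ and homogeneous of degree $d$ in $a$, its $x$-coefficients being given by polynomials over $\ZZ$ in the $a_i$ and in the coefficients of $f$; and for $a_0\ne 0$ one has $V\big(g(a;-)\big)=X\cap V(\ell)$ inside $V(\ell)\cong\PP^{n-1}$. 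Then I would take the Noether forms $\{N_j\}_j$ from \Cref{thm:noether.forms} attached to degree‑$d$ homogeneous polynomials in $n$ variables (for $n\ge4$ this is the theorem with parameter $n-1$; for $n=3$ it is the plane‑curve case of \cite{RuppertCrelle,KALTOFEN1995}), reduced mod $\charac K$ when $\charac K>0$; these have degree $O_n(d^2)$ in characteristic $0$ and $O_n(d^6)$ in characteristic $p$. By Bertini's irreducibility theorem (valid in all characteristics since $\dim X\ge2$) a generic hyperplane section of $X$ is geometrically irreducible; since $f$ is absolutely irreducible, $X$ is reduced and a generic hyperplane meets $X_\sing$ (of dimension $\le n-2$) in dimension $\le n-3<n-2$, so a generic section is generically smooth, hence reduced, hence — being also geometrically irreducible — cut out by an absolutely irreducible polynomial. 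Pick such an $\ell^\ast$ with $a_0^\ast\ne 0$; then some $N_{j_0}$ does not vanish on the tuple of $x$-coefficients of $g(a^\ast;-)$. I would then set $F(a):=N_{j_0}\big(\text{coefficients of }g(a;-)\big)$, a nonzero homogeneous polynomial in $a$ of degree $\le d\cdot O_n(d^{C_K-1})=O_n(d^{C_K})$, with coefficients that are integral polynomial expressions in those of $f$ (this is the content of ``$F\in\ZZ[x_0,\dots,x_n]$ reduced mod $\charac K$'').

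Finally I would verify that $F$ kills every bad $\ell$. If $a_0\ne0$ and $X\cap V(\ell)=V\big(g(a;-)\big)$ is not geometrically irreducible, then $g(a;-)$ cannot be absolutely irreducible (an absolutely irreducible polynomial cuts out a geometrically irreducible hypersurface), so every $N_j$ vanishes on its coefficients, in particular $F(a)=0$. If $a_0=0$, then putting $a_0=0$ in $g$ kills every monomial of $f$ except $x_0^d$, so $g(a;-)\big|_{a_0=0}=c\,(a_1x_1+\dots+a_nx_n)^d$ with $c$ the coefficient of $x_0^d$ in $f$ — a $d$-th power or zero, hence not absolutely irreducible — so again all $N_j$ vanish and $F(a)=0$; equivalently $a_0\mid F$. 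Thus $F(\ell)=0$ whenever $X\cap V(\ell)$ is not geometrically irreducible.

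The hard part will not be the logical skeleton above but two quantitative/characteristic-sensitive points: (i) the positive-characteristic Bertini input, i.e.\ that a generic hyperplane section of a reduced geometrically irreducible hypersurface of dimension $\ge2$ is again cut out by an \emph{absolutely irreducible} (in particular squarefree) polynomial, which is what lets us pick a nonvanishing Noether form; and (ii) upgrading the qualitative bound $O_n(d^{C_K})$ to the explicit constant $12(n+1)d^{C_K}$, which amounts to inserting the explicit Noether/Ruppert form degree bounds from \cite{RuppertCrelle} (characteristic $0$) and \cite{KALTOFEN1995} (characteristic $p$) into the degree count and keeping track of the extra factor of $d$ introduced by the coefficient-extraction map $g$.
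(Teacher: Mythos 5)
Your proposal is correct and takes essentially the same route as the paper: the paper's proof of \Cref{lem:Bertini} simply cites \cite[Lemma 4.3.7]{CCDN-dgc} in characteristic $0$ and \cite[Lemma 4.6]{Vermeulen:p} in positive characteristic, and those lemmas are proved exactly by your argument (substitute the hyperplane into $f$ on a chart of $(\PP^n)^*$, apply the Noether forms of \Cref{thm:noether.forms} to the coefficients of the restricted form, and use Bertini irreducibility plus generic reducedness to guarantee a nonvanishing choice), with the extra factor of $d$ from the substitution matching the corrected degree $(n-1)d(d^2-1)$ the paper records in characteristic $0$. The one item you defer — the explicit constant $12(n+1)$ — is likewise not rederived in the paper, which just quotes the explicit degree bounds of \cite{RuppertCrelle} and \cite{KALTOFEN1995}.
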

\begin{proof}
We use Noether forms as in \Cref{thm:noether.forms}.
In characteristic $0$, we refer to \cite[Lemma 4.3.7]{CCDN-dgc} noting that the statement and proof in \cite{CCDN-dgc} should be amended by correctly concluding the degree of the form $F$ to be at most $(n-1)d(d^2-1)$, instead of the written expression $(n-1)(d^2-1)$. A similar correction should be made in \cite[Lemma 7.5]{Pared-Sas}. See \cite[Lemma 4.6]{Vermeulen:p} for $\charac(K)>0$. 
\end{proof}
We can now prove \Cref{thm:HIT.v2:Q}.

\begin{thm}
\label{thm:HIT.v2}
Let $n \ge 3$, and let $f \in \cO_K[x_0,\ldots,x_n]$ be an $r$-irreducible homogeneous degree $d$ polynomial for an integer $r \ge 1$.
Assume furthermore that all absolutely irreducible factors of $f$ can be defined over the separable closure of $K$.
 Then there exists a hypersurface $W \subset (\mathbb{P}^{n})^*$ defined over $K$ of degree at most $O_n(d^{C_K})$ where $C_K=3$ if $\mathrm{char}(K)=0$ and $C_K=7$ if  $\mathrm{char}(K)>0$, such that if $\ell \notin W$
 is a $K$-linear form,
 then $f|_{V(\ell)}$ is $r$-irreducible over $K$.
\end{thm}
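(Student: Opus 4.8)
The plan is to factor $f$ over $K$, reduce to the case that $f$ is irreducible over $K$, and then treat separately the absolutely irreducible case and the geometrically reducible one. Write $f = c\prod_{i=1}^{s} f_i^{m_i}$ with each $f_i$ irreducible over $K$; since $f$ is $r$-irreducible, $d_i := \deg f_i \ge r+1$ for all $i$. It is enough to produce, for each $i$, a hypersurface $W_i \subset (\PP^n)^*$ defined over $K$ of degree $O_n(d_i^{C_K})$ such that $f_i|_{V(\ell)}$ is irreducible over $K$ whenever $\ell \notin W_i$. Indeed, $W := \bigcup_i W_i$ then has degree $\sum_i O_n(d_i^{C_K}) \le O_n((\sum_i d_i)^{C_K}) = O_n(d^{C_K})$ (as $C_K \ge 1$), and for a $K$-linear form $\ell \notin W$ one has $\ell \nmid f_i$ (the latter being $K$-irreducible of degree $\ge 2$), so in $f|_{V(\ell)} = c\prod_i (f_i|_{V(\ell)})^{m_i}$ each $f_i|_{V(\ell)}$ is irreducible over $K$ of degree exactly $d_i \ge r+1$; as degree is additive over products, every factor of $f|_{V(\ell)}$ has degree $\ge r+1$, i.e.\ $f|_{V(\ell)}$ is $r$-irreducible over $K$. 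So fix $i$ and assume $f = f_i$ is irreducible over $K$ of degree $d' := d_i \ge 2$. If $f$ is moreover absolutely irreducible, then $V(f) \subset \PP^n$ is a geometrically irreducible hypersurface of dimension $n-1 \ge 2$, and \Cref{lem:Bertini} produces a non-zero form $F$ over $K$ in the coefficients of $\ell$, of degree at most $12(n+1)(d')^{C_K}$, such that $F(\ell) \ne 0$ forces $f|_{V(\ell)}$ to be absolutely irreducible; take $W_i := V(F)$.

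Now suppose $f$ is irreducible over $K$ but not absolutely irreducible. By the separability hypothesis its absolutely irreducible factors lie in $K^{\mathrm{sep}}[x_0,\ldots,x_n]$; choose a finite Galois extension $L/K$ inside $K^{\mathrm{sep}}$ over which they are all defined and write $f = c\prod_{j=1}^{m} g_j$ with each $g_j$ absolutely irreducible of degree $\delta = d'/m$. Then $\Gal(L/K)$ acts transitively on $\{g_1,\ldots,g_m\}$: for any orbit $O$ the form $\prod_{g \in O} g$ is $\Gal(L/K)$-invariant, hence lies in $K[x_0,\ldots,x_n]$ and divides $f$ over $K$, so irreducibility of $f$ forces $O = \{g_1,\ldots,g_m\}$; moreover the $g_j$ are pairwise non-proportional, since a repeated absolutely irreducible factor would likewise make $f$ a scalar times a proper power. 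Apply \Cref{lem:Bertini} to each $g_j$ over $L$ to get non-zero forms $F_j$ of degree at most $12(n+1)\delta^{C_K}$ with $F_j(\ell) \ne 0 \Rightarrow g_j|_{V(\ell)}$ absolutely irreducible. Since the underlying Noether forms (\Cref{thm:noether.forms}) are defined over $\ZZ$, this construction is Galois-equivariant, so $\sigma(F_j) = F_{\sigma(j)}$ and $G := \prod_j F_j$ is $\Gal(L/K)$-invariant, hence defined over $K$ by \Cref{lem:descent}, of degree at most $m \cdot 12(n+1)\delta^{C_K} \le 12(n+1)(d')^{C_K}$ (using $m\delta = d'$, $C_K \ge 1$). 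To also keep the restrictions pairwise distinct, note that for each pair $j < k$ a suitable $2 \times 2$ minor $\Delta_{jk}$ of the coefficient vectors of $g_j|_{V(\ell)}$ and $g_k|_{V(\ell)}$ --- a form of degree $O(\delta)$ in the coefficients of $\ell$ --- vanishes whenever $g_j|_{V(\ell)} \parallel g_k|_{V(\ell)}$, and $\Delta_{jk}$ is not identically zero because a hyperplane through a point of $V(g_j) \setminus V(g_k)$ separates the two sections; so $\Delta := \prod_{j < k} \Delta_{jk}$ is $\Gal(L/K)$-invariant, hence $K$-rational, of degree $O((d')^2)$. Put $W_i := V(G \cdot \Delta)$, which has degree $O_n((d')^{C_K})$ since $C_K \ge 2$. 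For a $K$-linear form $\ell \notin W_i$ the forms $g_j|_{V(\ell)}$ are pairwise non-proportional and absolutely irreducible, hence are exactly the geometric irreducible components of $V(f) \cap V(\ell)$, on which $\Gal(L/K)$ still acts transitively; therefore $f|_{V(\ell)}$ equals, up to a scalar, the product over a single $\Gal(L/K)$-orbit, and is thus irreducible over $K$ of degree $d'$, as required.

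The step I expect to be the main obstacle is this last one. One must ensure the hyperplane sections $g_j|_{V(\ell)}$ stay not merely geometrically irreducible but reduced and irreducible as polynomials --- which is precisely what the Noether-form version of Bertini in \Cref{lem:Bertini} supplies --- and one must exclude the accidental coincidences $g_j|_{V(\ell)} \parallel g_k|_{V(\ell)}$ by an auxiliary hypersurface of degree only $O((d')^2)$, a bound just small enough to be absorbed into $O_n(d^{C_K})$ because $C_K \ge 2$. The separability hypothesis on the absolutely irreducible factors of $f$ enters exactly here: it lets us take $L/K$ Galois, so that $\Gal(L/K)$-invariance of a form forces its coefficients into $K$ (whence \Cref{lem:descent} applies and $W$ is defined over $K$), and it also guarantees that $f$ is squarefree over $\overline{K}$, so that the $g_j$ are genuinely distinct.
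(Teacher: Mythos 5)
Your proposal is correct and follows essentially the same strategy as the paper's proof: reduce to the $K$-irreducible factors of $f$, use the Noether-form Bertini statement (\Cref{lem:Bertini}) to keep each absolutely irreducible factor's hyperplane section absolutely irreducible, and use Galois transitivity on the conjugate factors (valid by the separability hypothesis) to recover irreducibility over $K$ of each restricted $K$-factor, adding an auxiliary locus to keep the restricted conjugate factors coprime. The only real difference is how that last locus is built: you exclude proportional restrictions via $2\times 2$ minors of coefficient vectors, a $K$-rational form of degree $O(d^2)$ absorbed since $C_K\ge 2$, whereas the paper notes that the hyperplanes containing $Z_i\cap Z_j$ form at most a point or a line in $(\PP^{n})^*$ and covers the resulting $O(d^2)$ points/lines by a hypersurface of degree $O_n(d^{C_K})$ --- both mechanisms work and give the same bound.
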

\begin{remark}
In particular, if $f$ is homogeneous and  irreducible over $K$, we can deduce a version (up to a change of variables) of Hilbert's irreducibilty theorem for $n
\ge 4$ with $S_T(F,B)
\le O_{n}(d^{C_K})$.
\end{remark}
\begin{proof}
Factor $f=h_1 \cdot \ldots \cdot h_{m}$ into absolutely irreducible (homogeneous) factors and let $Z_i$ be the variety cut by $h_i$ in $\PP^n$. 
By Lemma \ref{lem:Bertini}, for each $i$ there exists a hypersurface $Y_i \subset (\PP^{n})^*$ of degree at most  $O_n(\deg(h_i)^{C_K})$
such that if $\ell \notin Y_i$, then $Z_i \cap H_\ell$
remains geometrically irreducible. Setting $Y=\bigcup Y_i$, then $Y$ is of degree at most  $O_n(d^{C_K})$.

Let $g$ be an irreducible component of $f$ over $K$ and write $g=h_{i_1} \cdot \ldots \cdot h_{i_s}$ and $I_g:=\{i_1,\ldots,i_s\}$.
Since $g$ is irreducible over $K$, and by our separability assumption, it follows that $\{h_{i}\}_{i \in I_g}$  are mutually co-prime, and for each $i,j \in I_g$ there exists $\sigma_{i,j} \in \mathrm{Gal}(\overline{K}/K)$ such that $\sigma_{i,j}(Z_i)=Z_j$ (see Lemma \ref{lem:descent}).
Set  $Z_{i,j}:= Z_i \cap Z_j$ and note that $\dim Z_{i,j} = n-2$.
Consider the variety  $Y’_{i,j}:= \{ \ell \in (\mathbb{P}^{n})^* : Z_{i,j}\subseteq H_{\ell}\}$ which  parameterizes hyperplanes $H_\ell$ containing $Z_{i,j}$.
Let $V_{i,j}=\mathrm{span}(Z_{i,j})$ be the  $\overline{K}$-vector space spanned by vectors in $Z_{i,j}$.
If $Z_{i,j} \subseteq H_\ell$, then $V_{i,j} \subseteq H_\ell$. It follows that for each $i,j\in I_g$, the variety $Y’_{i,j}$ can be either empty, a point, or a line according to the value of $\dim (V_{i,j})$.
Therefore, $Y_g':= \bigcup_{i,j \in I_g} Y’_{i,j}$  contains at most $\frac{\deg(g)(\deg(g)-1)}{2}$ irreducible components, which can be lines or points. If $\ell \not \in Y’_{i,j} \cup Y_i \cup Y_j$, then $\dim (Z_{i,j} \cap H_\ell)=n-3$, and therefore $h’_{i}|_{H_{\ell}}$ and  $h’_{j}|_{H_{\ell}}$ are co-prime. It follows that if $\ell \notin Y'_g \cup Y$ is a $K$-linear form, then $g|_{H_{\ell}}$ is  irreducible over $K$, as $\sigma_{i,j}(Z_{i}\cap H_\ell)=\sigma_{i,j}(Z_{i})\cap H_\ell=Z_{j}\cap H_\ell$ for every $i,j \in I_g$.

Write $f=g_1 \cdot \ldots \cdot g_{m'}$ with $d_i:=\deg(g_i)$ for a decomposition of $f$ into irreducible factors over $K$, and let $Y':=\bigcup Y'_{g_i}$. Then $Y'$ contains at most $ \sum \frac{d_i(d_i-1)}{2} \le \frac{d(d-1)}{2}$ irreducible components which are either points or lines.
We can thus find a hypersurface $W \supseteq Y \cup Y'$ of degree at most $O_n(d^{C_K})$ such that if $\ell \not \in W$ is a $K$-linear form,
then each $g_i|_{H_{\ell}}$ remains  irreducible over $K$, and therefore $f$ is $r$-irreducible.
\end{proof}
\subsection{The NCC condition}\label{sec:H.condition}

In this section we show that the condition of being NCC is preserved when cutting with a well-chosen hyperplane, and with most of its translates. This allows us to use induction in Section \ref{sec: Dimension growth for global fields}. In fact, using the theory of Hilbert schemes it is easy to prove such a non-effective result in characteristic zero, see e.g.\ \cite{Salb.upcoming}. However, proving an effective variant which also holds in positive characteristic is much more technical.

Fix an algebraic closure $\overline{K}$ of $K$, and a separable closure  $K^{\sepc} \subset \overline{K}$. Recall the condition of being NCC is defined as follows.

\begin{defn}\label{defn:H-cond.general}
Let $f\in K[x_1,...,x_n]$ for some $n\geq 3$. We say that $f$ is NCC over $K$ if there does not exist a $K$-linear map $\ell:\AA_K^n\to \AA_K^2$ and a curve $C$ in $\AA_K^2$ such that $V(f)=\ell^{-1}(C)$.
\end{defn}

Equivalently, $f\in K[x_1, \ldots, x_n]$ is NCC over $K$ if there do not exist $K$-linear forms $\ell_1(x), \ell_2(x)$ on $K^n$ and a polynomial $g\in K[y_1, y_2]$ such that $f(x) = g(\ell_1(x), \ell_2(x))$.

We denote by $(\PP^{n-1}_K)^*$ the dual space of $\PP^{n-1}_K$, which parametrizes hyperplanes in $\PP^{n-1}_K$, or in $\AA^{n}_K$ through the origin. We identify points of $(\PP^{n-1}_K)^*(K)$ and linear forms in $n$ variables over $K$.
If $S$ is a finite subset of $(\PP^{n-1}_K)^*(\overline{K})$, then we define
\[
H_S = \bigcup_{\ell\in S} V(\ell)\subset \AA_K^{n},
\]
which is the union of all hyperplanes in $\AA_K^{n}$ defined by a linear form in $S$ (we always take $H_S$ to be reduced).

We begin by restating the NCC condition, and show that being NCC over $K$ is equivalent to being NCC over $K^{\sepc}$.

\begin{notation}
Let $f \in K[x_1, \ldots, x_n]$ be a non-constant polynomial and let $f_i$ denote the homogenous degree $i$ part of $f$. Let $I:=\{ i: f_i \neq 0\}$. We set,
\[
W(f) := \bigcup_{i\in I} V(f_i)_{\redu} \subset \AA^{n}_K,
\]
where $V(f_i)_{\redu}$ denotes the reduced scheme cut by the radical of $(f_i)$ in $\AA_K^n$.
\end{notation}

Note that $W(f)$ is always a union of cones through the origin, since we consider $W(f)$ in affine space.

\begin{prop}\label{prop:equiv.H.cond}
Let $n\geq 3$, and let $f\in K[x_1, \ldots, x_n]$ be a non-constant polynomial.
Then the following conditions are equivalent:
\begin{enumerate}
\item $f$ is not NCC over $K$.
\item There exists a line $L\subset (\PP^{n-1}_K)^*$ defined over $K$ and a finite subset
$S \subset L(\overline{K})$
such that $W(f)= H_S$.
\end{enumerate}
\end{prop}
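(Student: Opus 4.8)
The plan is to prove the equivalence of the two stated conditions by unwinding the definition of NCC in terms of the homogeneous parts of $f$, and then recognizing the resulting geometric condition as one about a pencil of hyperplanes.

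\textbf{Direction (2) $\Rightarrow$ (1).} Suppose $L\subset(\PP^{n-1}_K)^*$ is a line defined over $K$ and $S\subset L(\overline K)$ is finite with $W(f)=H_S$. A line in the dual space $(\PP^{n-1}_K)^*$ is a pencil of hyperplanes through the origin in $\AA^n_K$, and since $L$ is defined over $K$ it is the projectivization of a two-dimensional $K$-subspace of the space of linear forms; pick a $K$-basis $\ell_1,\ell_2$ of that subspace. Every linear form appearing in $S$ is then (up to scalar) of the form $\lambda\ell_1+\mu\ell_2$. I would first show that each homogeneous part $f_i$ ($i\in I$) is a product of such forms: indeed $V(f_i)_{\redu}\subseteq H_S$ is a union of hyperplanes cut out by forms from $S$, so the radical of $(f_i)$ is generated by a product of forms in the span of $\ell_1,\ell_2$, and since $V(f_i)$ is already a hypersurface this forces $f_i$ itself to be a scalar times a product of forms $\lambda_j\ell_1+\mu_j\ell_2$. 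Hence $f_i\in K[\ell_1,\ell_2]$ for every $i$ (we should be a little careful that $f_i$, a priori only a product of forms over $\overline K$, actually lies in $K[\ell_1,\ell_2]$: the form $f_i$ has $K$-coefficients and, after the $K$-linear change of coordinates sending $(\ell_1,\ell_2,\dots)$ to standard coordinates, becomes a $K$-polynomial in two of the variables because its zero set is a union of hyperplanes $\{$coordinate forms$\}$ supported on those two variables — this is where one uses that $f_i$ is a product of $\overline K$-linear forms whose zero locus is $K$-rational in the Zariski topology, forcing the combinatorial multiset of forms to be Galois-stable, hence definable over $K$ up to $\overline K^\times$-scalars). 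Summing over $i\in I$ gives $f=g(\ell_1,\ell_2)$ for some $g\in K[y_1,y_2]$, i.e.\ $f$ is not NCC over $K$.

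\textbf{Direction (1) $\Rightarrow$ (2).} Conversely, suppose $f$ is not NCC over $K$, so $f(x)=g(\ell_1(x),\ell_2(x))$ for $K$-linear forms $\ell_1,\ell_2$ and $g\in K[y_1,y_2]$; after a $K$-linear change of coordinates we may assume $\ell_1=x_1,\ell_2=x_2$, so $f\in K[x_1,x_2]$. Then each $f_i$ is a homogeneous polynomial in $x_1,x_2$ alone, hence splits over $\overline K$ into linear forms $\lambda x_1+\mu x_2$; its reduced zero locus $V(f_i)_{\redu}$ is a finite union of hyperplanes $\{\lambda x_1+\mu x_2=0\}$, i.e.\ $V(f_i)_{\redu}=H_{S_i}$ for a finite $S_i\subset L_0(\overline K)$, where $L_0\subset(\PP^{n-1}_K)^*$ is the line corresponding to forms in $x_1,x_2$ (manifestly defined over $K$). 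Taking $S=\bigcup_{i\in I}S_i$ we get $W(f)=\bigcup_i V(f_i)_{\redu}=H_S$ with $S\subset L_0(\overline K)$ finite, which is exactly (2).

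\textbf{Expected main obstacle.} The routine parts are the coordinate changes and the bookkeeping of homogeneous parts. The one genuinely delicate point is the descent step in (2) $\Rightarrow$ (1): from knowing $W(f)=H_S$ as a \emph{reduced} variety (so $S$ is only the set of forms, forgetting multiplicities, and a priori lives over $\overline K$) one must recover that \emph{$f$ itself}, with its multiplicities, lies in $K[\ell_1,\ell_2]$. The clean way is to argue that once $V(f_i)_{\redu}\subseteq H_S$, the form $f_i$ — viewed in the coordinate system adapted to $\ell_1,\ell_2$ — has zero locus contained in the union of $\{$forms in $x_1,x_2\}$, hence $f_i$ is divisible only by such forms, hence $f_i\in \overline K[x_1,x_2]$; then $f_i\in K[x_1,x_2]$ because $f_i$ has $K$-coefficients and is a polynomial in $x_1,x_2$ over $\overline K$ (a $K$-polynomial that lies in $\overline K[x_1,x_2]$ already lies in $K[x_1,x_2]$, by comparing coefficients). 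Handling the purely inseparable subtleties in positive characteristic (where $V(f_i)_{\redu}$ discards $p$-th power information) needs a line of care, but since we are free to pass to the radical on both sides and the forms involved are linear, no inseparability actually intervenes at the level of the \emph{linear} forms cutting out $H_S$; I would make this explicit. The companion fact that NCC over $K$ is equivalent to NCC over $K^{\sepc}$ then follows formally, since condition (2) is visibly insensitive to replacing $K$ by $K^{\sepc}$ (the line $L$ is defined over $K$ iff it is, being cut out by linear equations with a Galois-stable solution set).
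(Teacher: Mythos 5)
Your proof is correct and follows essentially the same route as the paper: in (1) $\Rightarrow$ (2) you use that each homogeneous part is a binary form in $\ell_1,\ell_2$ and hence splits into linear forms over $\overline K$ lying on the $K$-line they span, and in (2) $\Rightarrow$ (1) you use that every irreducible component of $V(f_i)$ is a hyperplane from $S$, so $f_i$ is a product of forms in $\mathrm{span}_{\overline K}(\ell_1,\ell_2)$, followed by descent to $K$. Your descent (change coordinates so $\ell_1,\ell_2$ become $x_1,x_2$ and compare coefficients of a $K$-polynomial lying in $\overline K[x_1,x_2]$) is just a repackaging of the paper's expansion of $h_i$ over a $K$-basis of $\overline K$, and it correctly avoids any Galois/inseparability issues, so your "Galois-stable multiset" aside is unnecessary.
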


\begin{proof}
(1) $\Rightarrow$ (2): Suppose that $f$ is not NCC. Then there exist distinct elements $P = (a_1:\ldots: a_n), Q = (b_1:\ldots: b_n)$ in $(\PP^{n-1}_K)^*(K)$ corresponding to $K$-linear forms $\ell_1(x):=\sum_m a_m x_m$ and $\ell_2(x):=\sum_m b_mx_m$, and a polynomial $g$ in $K[y_1, y_2]$ such that
\[
f(x_1, \ldots, x_n) = g(\ell_1(x), \ell_2(x)).
\]
Let $L$ be the line in $(\PP^{n-1}_K)^*$ through $P$ and $Q$. Write $f = f_0 + f_1 + \ldots +f_d$ where $f_i$ is the homogeneous degree $i$ part of $f$, and similarly for $g_i$. Then $f_i(x) = g_i(\ell_1(x), \ell_2(x))$,
for every $i = 0, \ldots, d$. Since every
 homogeneous polynomial in two variables splits over $\overline{K}$, each $g_i$ splits over $\overline{K}$, and we have that if $f_i$ is non-constant then
\[
V(f_i)_{\redu} = V(g_i(\ell_1(x), \ell_2(x)))_{\redu} = H_{S_i},
\]
where $S_i$ is some finite subset of $L(\overline{K})$. Hence $W(f) = H_{\cup_i S_i}$.

(2) $\Rightarrow$ (1): For the converse, suppose that (2) holds with  $P = (a_1 : \ldots : a_n), Q = (b_1: \ldots: b_n) \in (\PP_K^{n-1})^*(K)$ on the line $L$, and $S \subset L(\overline{K})$ a finite set. Let $\ell_1(x):=\sum_m a_m x_m$ and $\ell_2(x):=\sum_m b_mx_m$ denote the corresponding $K$-linear forms.

Writing $f=\sum\limits_{i=0}^d f_i$ where $f_i$ is homogeneous of degree $i$ as before, it is enough to show that for each $i$, we have $f_i=g_i(\ell_1(x), \ell_2(x))$ where $g_i\in K[y_1,y_2]$.
Fix $i$ such that $f_i$ is non-constant. Each $\ell \in S$ can be written as a $\overline{K}$-linear combination of $\ell_1, \ell_2$. Therefore, since
$
V(f_i)_{\redu} \subseteq W(f)
=H_S=\bigcup\limits_{\ell\in S} V(\ell),
$ there exists a polynomial
$h_i \in \overline{K}[y_1,y_2]$ such that $f_i(x)=h_i(\ell_1(x), \ell_2(x))$.
Write
\[
f_i(x)=h_i\left( \ell_1(x), \ell_2(x)\right) = \sum_j \lambda_j h_{ij}\left( \ell_1(x), \ell_2(x)\right),
\]
where $\{\lambda_j\}_j$ forms a $K$-basis for $\overline{K}$, $\lambda_1=1$, and $h_{ij} \in K[y_1,y_2]$.
Since $f_i$ is defined over $K$, and  $h_{ij}\left( \ell_1(x), \ell_2(x)\right)\in K[x_1,\ldots, x_n]$, we get by the linear independence of $\{\lambda_j\}_j$ that $h_{ij}=0$ unless $j=1$.
Therefore $h_i(y_1,y_2)=h_{i1}(y_1,y_2)$ is defined over $K$ as required.
\end{proof}
We now move to showing that a hypersurface is NCC over $K$ if and only if it is NCC over the separable closure $K^{\sepc}$. We first prove a proposition which  characterizes the dimension of the smallest $K$-linear space containing a given $\overline{K}$-point $P$ in $\mathbb{P}^m_K(\overline{K})$.
\begin{prop}
\label{prop:characterization of delta(P)}
Let $P=(a_1:\ldots:a_{m+1}) \in \mathbb{P}^m(\overline{K})$ be a point, and define
$\delta_K(P):=\dim\mathrm{span}_{K}\{ a_1,\ldots ,a_{m+1}\}$. Then $P$ lies on a unique $K$-linear space of dimension $\delta_K(P)-1$, and is not contained in any $K$-linear space of smaller dimension.
\end{prop}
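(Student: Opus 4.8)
The plan is to work with a fixed $\overline{K}$-point $P=(a_1:\ldots:a_{m+1})$ and analyse the $K$-linear subspaces of $\PP^m$ containing it purely in terms of linear algebra over $K$, viewing $\overline{K}$ as a $K$-vector space. A $K$-linear subspace of $\PP^m$ of dimension $e-1$ is the projectivization of a $K$-subspace $U\subseteq K^{m+1}$ of dimension $e$, and $P$ lies on it exactly when the vector $a=(a_1,\ldots,a_{m+1})\in\overline{K}^{\,m+1}$ lies in $U\otimes_K\overline{K}$. So the statement translates to: there is a unique minimal $K$-subspace $U_0\subseteq K^{m+1}$ with $a\in U_0\otimes_K\overline{K}$, and $\dim_K U_0=\delta_K(P)$, where $\delta_K(P)=\dim_K\mathrm{span}_K\{a_1,\ldots,a_{m+1}\}$.

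First I would establish existence of a $K$-subspace of dimension $\delta_K(P)$ whose extension to $\overline{K}$ contains $a$. Pick a $K$-basis $e_{i_1},\ldots,e_{i_\delta}$ of the coordinate span, so that each standard basis vector $e_j$ of $K^{m+1}$, when scaled appropriately, has its $a$-coordinate expressible via these; concretely, writing $a_j=\sum_s c_{js}a_{i_s}$ with $c_{js}\in K$, the vector $a$ equals $\sum_s a_{i_s}\,w_s$ where $w_s=\sum_j c_{js}e_j\in K^{m+1}$. Thus $a\in\mathrm{span}_K\{w_1,\ldots,w_\delta\}\otimes_K\overline{K}$, giving a $K$-subspace $U_0=\mathrm{span}_K\{w_1,\ldots,w_\delta\}$ of dimension at most $\delta$ (in fact exactly $\delta$, since projecting to the coordinates $i_1,\ldots,i_\delta$ shows the $w_s$ are $K$-independent). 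This gives a $K$-linear space of dimension $\delta-1$ through $P$.

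Next I would prove minimality and uniqueness together. Suppose $U\subseteq K^{m+1}$ is any $K$-subspace with $a\in U\otimes_K\overline{K}$. Choosing a $K$-basis $\{\lambda_t\}$ of $\overline{K}$ with $\lambda_1=1$ and writing $a=\sum_t\lambda_t u_t$ with $u_t\in U$ (a finite sum, since $U\otimes_K\overline{K}=\bigoplus_t\lambda_t U$), each coordinate $a_j=\sum_t\lambda_t(u_t)_j$ expresses $a_j$ as a $K$-linear combination of the coordinates of the finitely many vectors $u_t\in U$. Hence $\mathrm{span}_K\{a_1,\ldots,a_{m+1}\}$ is contained in the $K$-span of all coordinates of the $u_t$'s, which has dimension at most... — more cleanly: each $u_t$ lies in $U$, so its coordinates lie in the (finite-dimensional) $K$-span of coordinates of vectors in $U$; I would phrase this via the observation that the $K$-subspace $V_U\subseteq K$ generated by all entries of all vectors of $U$ satisfies $\mathrm{span}_K\{a_j\}\subseteq V_U$, but the cleanest route is: the coordinate functionals show $a_j\in\sum_t K\cdot(u_t)_j$. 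Therefore $\delta_K(P)\le\dim_K U$ once we note each $u_t\in U$ forces $(u_t)_j$ to contribute, and a dimension count via the projection argument below seals it. For uniqueness: if $U$ has dimension exactly $\delta$, then by the intersection-stable argument (if $a\in U_1\otimes\overline{K}$ and $a\in U_2\otimes\overline{K}$ then $a\in(U_1\cap U_2)\otimes\overline{K}$, using flatness of $\overline{K}$ over $K$ so that $(U_1\cap U_2)\otimes\overline{K}=(U_1\otimes\overline{K})\cap(U_2\otimes\overline{K})$), the collection of such $U$ is closed under intersection, hence has a unique minimal element, and it has dimension $\ge\delta$ by minimality of $\delta$, so the minimal one has dimension exactly $\delta$ and every $\delta$-dimensional such $U$ equals it.

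The main obstacle is getting the dimension inequality $\delta_K(P)\le\dim_K U$ cleanly; the key trick is the choice of a $K$-basis $\{\lambda_t\}$ of $\overline{K}$ to decompose $U\otimes_K\overline{K}=\bigoplus_t\lambda_t U$, so that the expansion $a=\sum_t\lambda_t u_t$ has coordinates lying in $U$'s coordinate span, combined with the flatness fact $(U_1\cap U_2)\otimes_K\overline{K}=(U_1\otimes_K\overline{K})\cap(U_2\otimes_K\overline{K})$ for the uniqueness. Everything else is routine linear algebra. I would conclude by translating back: the unique minimal $U_0$ of dimension $\delta_K(P)$ corresponds to the unique $K$-linear space of dimension $\delta_K(P)-1$ through $P$, and no smaller-dimensional $K$-linear space contains $P$.
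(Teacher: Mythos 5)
Your overall framing --- translating to $K$-subspaces $U\subseteq K^{m+1}$ with $a\in U\otimes_K\overline{K}$, building a $\delta$-dimensional $U_0$ from a $K$-basis of the coordinate span (your $w_s$ are exactly the paper's points $Q_i$), and deducing uniqueness from intersection-stability $(U_1\cap U_2)\otimes_K\overline{K}=(U_1\otimes_K\overline{K})\cap(U_2\otimes_K\overline{K})$ --- is sound. But the step you yourself flag as the main obstacle, namely the inequality $\delta_K(P)\le\dim_K U$ for every $K$-subspace $U$ with $a\in U\otimes_K\overline{K}$, is not actually proved, and the justifications offered are wrong. Expanding $a=\sum_t\lambda_t u_t$ over a $K$-basis $\{\lambda_t\}$ of $\overline{K}$ gives $a_j=\sum_t\lambda_t(u_t)_j$, an $\overline{K}$-linear combination (with possibly far more than $\dim_K U$ nonzero terms) of elements of $K$; the claims ``$\mathrm{span}_K\{a_j\}\subseteq V_U$'' and ``$a_j\in\sum_t K\cdot(u_t)_j$'' cannot be right, since $V_U$ and $\sum_t K\cdot(u_t)_j$ are $K$-subspaces of $K$ (hence at most one-dimensional), while the $a_j$ in general do not lie in $K$. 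Since both the assertion ``$P$ lies on no $K$-linear space of dimension $<\delta_K(P)-1$'' and the claim that your minimal $U_0$ has dimension $\ge\delta$ rest on exactly this inequality, the argument as written has a genuine gap.

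The fix is short and is essentially what the paper does (it is the computation in the final paragraph of its inductive proof): expand $a$ in a $K$-basis $v_1,\ldots,v_e$ of $U$, so $a=\sum_{i=1}^e\mu_i v_i$ with $\mu_i\in\overline{K}$. Then each coordinate satisfies $a_j=\sum_i\mu_i(v_i)_j\in\mathrm{span}_K\{\mu_1,\ldots,\mu_e\}$ because $(v_i)_j\in K$, hence $\mathrm{span}_K\{a_1,\ldots,a_{m+1}\}\subseteq\mathrm{span}_K\{\mu_1,\ldots,\mu_e\}$ and $\delta_K(P)\le e=\dim_K U$. With that lemma supplied, your route (existence via the $w_s$, uniqueness via intersection-stability and flatness) becomes a clean, non-inductive alternative to the paper's coordinate induction, which instead proves uniqueness by showing directly that any other $(\delta_K(P)-1)$-dimensional $K$-space through $P$ must contain the points $Q_i$.
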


\begin{proof}
First note that $\delta_K(P)$ is well defined, since for any non-zero $\lambda \in \overline{K}$ the collection $\{a_{i_j}\}_j$ is linearly independent if and only if $\{\lambda a_{i_j}\}_j$ is linearly independent.
For $\delta_K(P)=1$ the statement is clear; $\delta_K(P)=1$ if and only if $P$ is defined over $K$. Let $s>1$ and assume by induction that the statement holds for every  point $P'$ with $\delta_K(P')<s$.

If $\delta_K(P)=s$, we may assume without loss of generality that  $\{ a_1, \ldots,a_s\}$ forms a $K$-basis for $\mathrm{span}_K\{ a_1,
\ldots , a_{m+1}\}$, and write $a_j=\sum_{i=1}^{s}a_i b_{ij}$ for each $j$, and some $b_{ij}\in K$. We get $K$-points $Q_i=(b_{i1}:\ldots:b_{i\,m+1})\in \mathbb{P}^m(K)$, where $1 \le i \le s$.
Note that  $b_{ij}=1$ if $i=j$ and $b_{ij}=0$ if $i \neq j$ and $j \le s$. Therefore $\{Q_i\}_{i=1}^s$ are linearly independent over $K$, and the $K$-linear space $L\subseteq \PP^{m}$ spanned by $Q_1,\ldots,Q_s$ has dimension $s-1$, and contains $P$.
If $P$ is contained in a different $K$-linear space $L'$ of dimension $s-1$, then we have linearly independent points $Q'_i=(b'_{i1}:\ldots:b'_{i\,m+1})\in L'(K)$ and we may write $a_j=\sum_{i=1}^{s}\lambda_i b'_{ij}$
for each $j$ and some $\lambda_i \in \overline{K}$. Since $\{ a_1,\ldots,a_s\} \subset \mathrm{span}_K\{ \lambda_1,\ldots,\lambda_s\}$ is a $K$-basis, we may write $\lambda_j= \sum_{k=1}^s a_k\alpha_{jk}$ where $\alpha_{jk}\in K$. Rewriting, we get for each $j$,
\[
\sum_{i=1}^{s}a_i b_{ij}
=a_j
=\sum_{i=1}^{s}\lambda_i b'_{ij}
=a_1\left(\sum_{k=1}^s \alpha_{k1}b'_{kj}\right)
+\ldots+a_s\left(\sum_{k=1}^s \alpha_{ks}b'_{kj}\right).
\]
Since $\{ a_1,\ldots,a_s\}$ is a $K$-basis, we get
$b_{ij}=\sum_{k=1}^s \alpha_{ki}b'_{kj}$
 for all $i,j,k$, implying that $\{Q_1 ,\ldots,Q_s\} \subset L'$. It  follows that $L=L'$ is unique, and that $P$ is not contained in any $K$-linear space of dimension smaller than $s-1$, as that would contradict the uniqueness of $L$.

If $P$ is contained in a unique $K$-linear space $L$ of dimension $s-1$, then $\delta_K(P) \ge s$ since otherwise there exists by induction  a $K$-linear space of dimension smaller than $s-1$ containing $P$, and therefore there are infinitely many $K$-linear spaces of dimension $s-1$ on which $P$ lies. Since $L$ is defined over $K$,
there exist points $Q_1, \ldots ,Q_s\in L(K)$, with $Q_i=(b_{i1}:\ldots:b_{i\,m+1})$ where we may assume $b_{ij} \in K$ for all $i,j $, and scalars $\lambda_1, \ldots ,\lambda_s \in \overline{K}$ such that
\[
P=\lambda_1 Q_1 + \ldots +\lambda_s Q_s
.
\]
 We get  $a_j=\sum_{i=1}^{s}\lambda_i b_{ij}$ for each $j$, implying $\{ a_1,\ldots,a_{m+1}\} \subset \mathrm{span}_K\{\lambda_1, \ldots,\lambda_s\}$,
and therefore $\delta_K(P) \le s$, implying $\delta_K(P)=s$.
\end{proof}

\begin{prop}
\label{prop:H over k is equivalent to H over Ksep}
Let $n\geq 3$, and let $f\in K[x_1, \ldots, x_n]$ be a non-constant polynomial. Then $f$ is NCC over $K$ if and only if $f$ is NCC over $K^{\sepc}$.
\end{prop}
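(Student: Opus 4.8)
The plan is to use the characterizations developed just above, namely Proposition~\ref{prop:equiv.H.cond} and Proposition~\ref{prop:characterization of delta(P)}. One direction is immediate: if $f$ is NCC over $K^{\sepc}$ then it is certainly NCC over $K$, since any witnessing $K$-linear forms and curve would also be defined over $K^{\sepc}$. So the content is the converse: assuming $f$ is \emph{not} NCC over $K^{\sepc}$, we must show $f$ is not NCC over $K$. By Proposition~\ref{prop:equiv.H.cond} applied over $K^{\sepc}$, there is a line $L' \subset (\PP^{n-1}_{K^{\sepc}})^*$ defined over $K^{\sepc}$ and a finite set $S' \subset L'(\overline{K})$ with $W(f) = H_{S'}$. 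Since $W(f)$ is defined over $K$ (it is built from the $V(f_i)_{\redu}$, and $f \in K[x_1,\ldots,x_n]$), the plan is to descend the line $L'$ to $K$.

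First I would argue that the line $L'$ is actually unique: if $W(f) = H_{S'}$ with $S'$ spanning a line, then every irreducible component of $W(f)$ is a hyperplane $V(\ell)$ with $\ell \in L'(\overline{K})$, so $L'$ is the unique line through (the dual points of) any two non-proportional such $\ell$'s — here one uses that $W(f)$ genuinely involves at least two independent linear forms, which holds because $f$ is not a polynomial in one linear form (if it were, $W(f)$ would be a single hyperplane and $f$ would trivially fail NCC over $K$ as well, handled separately). Granting uniqueness, the Galois group $\Gal(K^{\sepc}/K)$ permutes the components of $W(f)$ (as $W(f)$ is $K$-rational) hence fixes the set of dual points, hence fixes $L'$ setwise; a line in $(\PP^{n-1})^*$ fixed by all of $\Gal(K^{\sepc}/K)$ is defined over $K$ by Galois descent (Lemma~\ref{lem:descent}, or directly). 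Call this $K$-line $L$. Now Proposition~\ref{prop:characterization of delta(P)} enters: each $\ell \in S'$, viewed as a point of $\PP^{n-1}(\overline{K})$ lying on the $K$-line $L$, has $\delta_K(\ell) \le 2$, so $\ell$ is a $\overline{K}$-linear combination of two $K$-linear forms spanning $L$. Choosing two $K$-points $P, Q$ spanning $L$ and writing $\ell_1, \ell_2$ for the corresponding $K$-linear forms, every $\ell \in S'$ is a $\overline K$-combination of $\ell_1,\ell_2$, so each homogeneous part $f_i$ has $V(f_i)_{\redu}$ a union of hyperplanes of the form $V(\alpha \ell_1 + \beta \ell_2)$. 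This forces $f_i = h_i(\ell_1,\ell_2)$ for some $h_i \in \overline{K}[y_1,y_2]$, and then the linear-independence-of-basis argument used at the end of the proof of Proposition~\ref{prop:equiv.H.cond} shows $h_i$ is in fact defined over $K$. Summing over $i$ gives $f = g(\ell_1,\ell_2)$ with $g \in K[y_1,y_2]$, so $f$ is not NCC over $K$.

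The main obstacle I anticipate is the uniqueness of the line $L'$ and the bookkeeping around degenerate cases: one must handle the possibility that $W(f)$ consists of a single hyperplane (so that no unique line is determined — but then $f$ depends on only one linear form over $\overline K$, and a short separate argument, again via Proposition~\ref{prop:characterization of delta(P)} with $\delta_K \le 1$ after noting $V(f_i)_{\redu}$ is $K$-rational, shows $f$ depends on one linear form over $K$, hence is non-NCC over $K$). Once uniqueness of $L'$ is in hand, the Galois-descent step is routine and the reconstruction of $g$ over $K$ is exactly the computation already carried out in Proposition~\ref{prop:equiv.H.cond}; the only genuinely new ingredient is the use of $\delta_K(P) \le 2 \iff P$ lies on a $K$-line, which is precisely Proposition~\ref{prop:characterization of delta(P)}.
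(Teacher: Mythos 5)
Your easy direction and your main case (where $W(f)$ contains two non-proportional hyperplane components) are essentially the paper's argument: the line is then unique, it is stable under $\Aut(\overline{K}/K)$ because $W(f)$ is, and Lemma \ref{lem:descent} descends it to $K$; at that point you can simply invoke the implication (2)$\Rightarrow$(1) of Proposition \ref{prop:equiv.H.cond} instead of redoing the reconstruction. The genuine gap is in your degenerate case, where $W(f)$ is a single hyperplane $V(\ell)$. You claim that $K$-rationality of $V(f_i)_{\redu}$ forces $\delta_K(\ell)\le 1$, i.e.\ that $\ell$ is proportional to a $K$-linear form, so that $f$ is a polynomial in one $K$-linear form. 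This fails in positive characteristic: for $K=\FF_p(t)$ and $f=x_1^p-t\,x_2^p\in K[x_1,x_2,x_3]$ one has $f=(x_1-t^{1/p}x_2)^p$ over $\overline{K}$, so $W(f)$ is the single hyperplane dual to $\ell=(1:-t^{1/p}:0)$, which has $\delta_K(\ell)=2$; moreover $f$ is not a polynomial in any single $K$-linear form (a homogeneous such $f$ would have to be $c\lambda^p$, forcing $t^{1/p}\in K$), even though it is of course non-NCC over $K$, being a polynomial in $x_1,x_2$. The point is that $V(f_i)_{\redu}$ is taken with the radical over $K$, so it need not be geometrically reduced, and the linear form cutting the geometric hyperplane need not descend; this singleton case is precisely where the positive-characteristic subtlety of the proposition lives (in characteristic zero your shortcut is fine, since a point fixed by $\Aut(\overline{K}/K)$ is then $K$-rational).

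What is true, and suffices, is weaker: one only needs a $K$-line through $\ell$, not that $\ell$ itself be a $K$-point. Since $\sigma(W(f))=W(f)$ for all $\sigma\in\Aut(\overline{K}/K)$ and $S=\{\ell\}$ is a singleton, the point $\ell$ is fixed by $\Aut(\overline{K}/K)$; and since by hypothesis $\ell$ lies on a $K^{\sepc}$-line, $\delta_{K^{\sepc}}(\ell)\le 2$. If $\delta_{K^{\sepc}}(\ell)=1$, then $\ell$ is a $K^{\sepc}$-point fixed by $\Gal(K^{\sepc}/K)$, hence a $K$-point, and any $K$-line through it realizes condition (2) of Proposition \ref{prop:equiv.H.cond}. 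If $\delta_{K^{\sepc}}(\ell)=2$, Proposition \ref{prop:characterization of delta(P)} applied over $K^{\sepc}$ gives a \emph{unique} $K^{\sepc}$-line through $\ell$; uniqueness together with $\sigma(\ell)=\ell$ makes this line $\Gal(K^{\sepc}/K)$-stable, so it is defined over $K$ by Lemma \ref{lem:descent}, and Proposition \ref{prop:equiv.H.cond} concludes that $f$ is not NCC over $K$. With the degenerate case repaired along these lines (which is exactly the paper's treatment), the remainder of your argument goes through.
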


\begin{proof}
If $f$ is not NCC over $K$, then clearly it is not NCC over $K^{\sepc}$.

If $f$ is not NCC over $K^{\sepc}$, by Proposition \ref{prop:equiv.H.cond} there exists a line $L \subset (\PP^{n-1})^*$ defined over $K^{\sepc}$
 and a finite set $S \subset L(\overline{K})$
such that $W(f)= H_S$.
Since $f$ is defined over $K$, we have that
$\sigma(W(f)) = W(f)$ for all $\sigma\in \Aut(\overline{K}/K)$ and therefore $\sigma(S) = S$ for all $\sigma\in \Aut(\overline{K}/K)$.

If $|S|\ge 2$, since $S$ consists of points on the line $L$, then the automorphism group $\Aut(\overline{K}/K)$ must preserve $L$.
 Since $L$ is defined over $K^{\sepc}$, this implies that $\sigma(L) = L$ for every $\sigma\in \Gal(K^{\sepc}/K)$. Hence \Cref{lem:descent} implies that $L$ can be defined over $K$, which proves the required statement using Proposition \ref{prop:equiv.H.cond}.

We may thus assume that $S = \{\ell \}$ is a singleton.  Then we have that $\sigma(\ell) = \ell$ for every $\sigma \in \Aut(\overline{K}/K)$. Write $\ell = (c_1 : \ldots : c_n)$, then without loss of generality we may assume that $c_1 = 1$. It follows that $\sigma(c_i) = c_i$ for every $\sigma\in \Aut(\overline{K}/K)$.

Consider now $\delta_{K^{\sepc}}(\ell)$ as in Proposition \ref{prop:characterization of delta(P)}. Since $\ell$ lies on the $K^{\sepc}$-line  $L$, then $\delta_{K^{\sepc}}(\ell) \le 2$. If $\delta_{K^{\sepc}}(\ell)=1$, then $\ell$ is defined over $K^{\sepc}$, and therefore $\ell$ is defined over $K$, since every element of $\Gal(K^{\sepc}/K)$ fixes $\ell$. We may thus take any line defined over $K$ which  passes through $\ell$ to deduce Condition (2) of Proposition \ref{prop:equiv.H.cond}.

Assume $\delta_{K^{\sepc}}(\ell)=2$. By Proposition \ref{prop:characterization of delta(P)}, there exists a unique line $L$ defined over $K^{\sepc}$ which contains $\ell$. Since for each  $\sigma\in \Aut(\overline{K}/K)$ we have
 $\sigma(\ell)=\ell$, the line
$\sigma(L)$ is defined over $K^{\sepc}$ and contains $\ell$. We get $\sigma(L)=L$ for every
 $\sigma\in \Aut(\overline{K}/K)$, and thus $L$ is defined over $K$ by Lemma \ref{lem:descent}.
%
%
\end{proof}

In characteristic zero
 the condition of being NCC may be reformulated  using the partial derivatives of $f$. This allows us to simplify the proof of  \Cref{prop:preservation.H.cond} in that case.

\begin{lem}\label{lem:H.cond.partial.derivatives}
Assume that $K$ has characteristic zero. Let $f\in K[x_1, \ldots, x_n]$ be non-constant, with $n\geq 3$. Then $f$ is NCC over $K$ if and only if the $K$-vector subspace $V$ of $K[x_1, \ldots, x_n]$ spanned by $\left\{\frac{\partial f}{\partial x_i}\right\}_{i=1}^n$ has dimension at least $3$.
\end{lem}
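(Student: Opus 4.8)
The plan is to prove both directions by translating the NCC condition into linear-algebraic language via the partial derivatives, using that in characteristic zero a polynomial is determined (up to a constant) by its gradient, and that homogeneous parts can be recovered by Euler's identity.

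\medskip

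\textbf{Proof.} First suppose $f$ is not NCC over $K$, so that $f(x) = g(\ell_1(x),\ell_2(x))$ for some $K$-linear forms $\ell_1,\ell_2$ and some $g\in K[y_1,y_2]$. By the chain rule,
\[
\frac{\partial f}{\partial x_i} = \frac{\partial g}{\partial y_1}(\ell_1,\ell_2)\,\frac{\partial \ell_1}{\partial x_i} + \frac{\partial g}{\partial y_2}(\ell_1,\ell_2)\,\frac{\partial \ell_2}{\partial x_i},
\]
and since the $\partial\ell_j/\partial x_i$ are scalars, every $\partial f/\partial x_i$ lies in the $K$-span of the two polynomials $(\partial g/\partial y_1)(\ell_1,\ell_2)$ and $(\partial g/\partial y_2)(\ell_1,\ell_2)$. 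Hence $\dim V \le 2$.

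\medskip

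Conversely, suppose $\dim V \le 2$. If $\dim V \le 1$, then all partials of $f$ are $K$-multiples of a single polynomial $h$; writing $\frac{\partial f}{\partial x_i} = a_i h$ with $a_i\in K$ and using that $\partial_i\partial_j f = \partial_j\partial_i f$ forces $a_i\partial_j h = a_j \partial_i h$, one deduces (since not all $a_i$ vanish, as $f$ is non-constant) that $h$ itself depends, up to an invertible linear change of coordinates, on a single linear form $\ell(x) = \sum a_i x_i / a_{i_0}$; integrating, $f$ is a polynomial in $\ell(x)$ alone, which is a fortiori not NCC (choose any second independent linear form). Now assume $\dim V = 2$ with basis $h_1, h_2$. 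Write $\frac{\partial f}{\partial x_i} = a_i h_1 + b_i h_2$ with $a_i, b_i \in K$, not all $a_i$ proportional to the corresponding $b_i$ (else $\dim V \le 1$). The symmetry relations $\partial_j(a_i h_1 + b_i h_2) = \partial_i(a_j h_1 + b_j h_2)$ read
\[
a_i\,\frac{\partial h_1}{\partial x_j} + b_i\,\frac{\partial h_2}{\partial x_j} = a_j\,\frac{\partial h_1}{\partial x_i} + b_j\,\frac{\partial h_2}{\partial x_i}.
\]
Pick two linear forms $\ell_1(x) = \sum_i a_i x_i$ and $\ell_2(x) = \sum_i b_i x_i$; these are linearly independent over $K$ by the above non-proportionality. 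After an invertible $K$-linear change of variables we may assume $\ell_1 = x_1$, $\ell_2 = x_2$, i.e. $a = (1,0,\ldots,0)$, $b = (0,1,0,\ldots,0)$; then the gradient of $f$ has the form $(h_1, h_2, 0,\ldots,0)$, so $f$ does not depend on $x_3,\ldots,x_n$, i.e. $f\in K[x_1,x_2]$, which exhibits $f$ as not NCC. The homogeneous-part statement (that $W(f)$ being a union of hyperplanes through a common line corresponds to the same condition) is exactly \Cref{prop:equiv.H.cond} and is not needed here.

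\medskip

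The step I expect to require the most care is the reduction ``after an invertible change of variables we may assume $\ell_1 = x_1,\ \ell_2 = x_2$''. One must check that the new partial-derivative space is still spanned by the images of $h_1, h_2$ (it transforms $K$-linearly, so its dimension is preserved) and that, having arranged the gradient to be supported on the first two coordinates, the mixed-partial vanishing $\partial_i\partial_j f = 0$ for $i\ge 3$ genuinely forces $f$ to be a polynomial in $x_1,x_2$ only — here characteristic zero is essential, since it lets us integrate and recover $f$ from its gradient up to a constant. An alternative, coordinate-free phrasing would use Euler's formula $i\,f_i = \sum_j x_j \,\partial_j f_i$ to recover each homogeneous piece $f_i$ from the partials and conclude each $f_i$ factors through $(\ell_1,\ell_2)$, matching \Cref{prop:equiv.H.cond}; I would present whichever of the two is shorter after filling in details. \qed
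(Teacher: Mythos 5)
Your proof is correct and follows essentially the same route as the paper: the easy direction via the chain rule, and the converse by an invertible $K$-linear change of variables after which all partials with respect to the last $n-2$ variables vanish, so that in characteristic zero $f$ becomes a polynomial in two linear forms. The paper streamlines this by taking the spanning set of $V$ to consist of two of the $\partial f/\partial x_i$ themselves (WLOG $\partial f/\partial x_1,\partial f/\partial x_2$) and writing the substitution explicitly, which makes your separate $\dim V\le 1$ case and the (unused) mixed-partial symmetry relations unnecessary.
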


\begin{proof}
If $f$ is not NCC, then clearly $\dim_K V \leq 2$.

Conversely, assume that $\dim_K V \leq 2$. Without loss of generality, we can assume that $\frac{\partial f}{\partial x_1}, \frac{\partial f}{\partial x_2}$ span $V$. For each $i$ there are $a_i, b_i$ in $K$ such that
\[
\frac{\partial f}{\partial x_i} = a_i \frac{\partial f}{\partial x_1} + b_i \frac{\partial f}{\partial x_2}.
\]
Consider the polynomial
\[
g(y_1, \ldots, y_n) = f\left( y_1 - \sum_{i\geq 3} a_i y_i, y_2 - \sum_{i\geq 3} b_i y_i, y_3, \ldots, y_n\right) \in K[y_1, \ldots, y_n].
\]
Then $\frac{\partial g}{\partial y_i} = 0$ whenever $i\geq 3$, and so $g(y)$ is a polynomial only involving $y_1, y_2$. Hence $f$ is not NCC.
\end{proof}
%

We can now prove that being NCC is preserved when intersecting with suitably chosen hyperplanes.
\begin{prop}\label{prop:preservation.H.cond}
Let $K$ be a global field, fix $n\geq 4$, and
let $f\in K[x_1, \ldots, x_n]$ be a polynomial of degree $d\geq 2$. Assume that $f$ is NCC over $K$.
\begin{enumerate}
\item
Set $C_K=3$ if $\mathrm{char} K =0$ and  $C_K=7$ if $\mathrm{char} K >0$.
There exists a hypersurface $W \subset (\PP^{n-1}_K)^*$ of degree at most $O_n(d^{C_K})$ such that
if $\ell \notin W$,
\begin{enumerate}
\item $f|_{V(\ell)}$ is NCC over $K$; and
\item $f_d|_{V(\ell)}$ is not identically zero.
\end{enumerate}
In particular, there exists a non-zero $K$-linear form $\ell$ of height at most  $O_n(d^{C_K})$ satisfying Conditions (a) and (b) above.
\item Set $C_K=1$ if $\charac K = 0$ and $C_K = 7$ if $\charac K > 0$. Let $\ell_1, \ell_2$ be $K$-linear forms. Assume that $f|_{V(\ell_1)}$ is NCC over $K$ and that $f_d|_{V(\ell_1)}$ is  not identically zero. There are at most $O_n(d^{C_K})$ values $a\in K$ such that one of the two following conditions hold
\begin{enumerate}
\item $f|_{V(\ell_1 + a\ell_2)}$ is not NCC over $K$; or
\item $f_d|_{V(\ell_1 + a\ell_2)}$ is identically zero.
\end{enumerate}
\item Set $C_K = 1$ if $\charac K = 0$ and $C_K = 2$ if $\charac K > 0$. Let $\ell$ be a $K$-linear form. Assume that $f|_{V(\ell)}$ is NCC over $K$ and that $f_d|_{V(\ell)}$ is not identically zero. If $\charac K > 0$ assume moreover that $f_d|_{V(\ell)}$ is not a power of a $K$-linear form, up to a constant. Then there are at most $O(d^{C_K})$ elements $b\in K$ such that $f|_{V(\ell - b)}$ is not NCC over $K$.
\end{enumerate}
\end{prop}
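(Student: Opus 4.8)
The plan is to reduce the question of preserving the NCC condition under the translation $\ell \mapsto \ell - b$ to a question about how the homogeneous parts $f_i$ of $f$ behave, using the characterization in Proposition \ref{prop:equiv.H.cond}. Fix coordinates so that $\ell = x_n$, and write $g := f|_{V(\ell-b)} = f(x_1,\dots,x_{n-1},b)$, a polynomial in $n-1 \ge 3$ variables. Its top-degree part is $g_d = f_d|_{V(\ell)} = f_d(x_1,\dots,x_{n-1},0)$, independent of $b$, which is nonzero by hypothesis. The key observation is that if $g$ is not NCC over $K$, then by Proposition \ref{prop:equiv.H.cond} (combined with Proposition \ref{prop:H over k is equivalent to H over Ksep}) there is a line $L \subset (\PP^{n-2}_K)^*$ and a finite set $S \subset L(\overline K)$ with $W(g) = H_S$; in particular every nonconstant homogeneous part $g_i$ of $g$ has its reduced zero locus a union of hyperplanes through the origin whose dual points lie on the single line $L$. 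Since $g_d$ is among these and is independent of $b$, the line $L$ is forced: it must be the line $L_0$ through the (at most $d$, hence finitely many) dual points of the hyperplane components of $V(g_d)_{\redu}$, provided $g_d$ is not itself (a constant times) a power of a single linear form. This is exactly why the extra hypothesis in positive characteristic is imposed; in characteristic zero, $V(g_d)_{\redu}$ being a union of hyperplanes already forces enough structure, or one argues directly. So once $g_d$ determines $L_0$, the constraint becomes: for $g$ to fail NCC, each nonconstant $g_i$ must have $V(g_i)_{\redu}$ a union of hyperplanes dual to points of $L_0$.

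The next step is to translate this into a polynomial condition on $b$. After a $K$-linear change of the first $n-1$ coordinates, we may assume $L_0$ is the line through $(1:0:\cdots:0)$ and $(0:1:0:\cdots:0)$, i.e.\ the condition "$V(g_i)_{\redu} = H_S$ for some $S \subset L_0(\overline K)$" is precisely the condition that $g_i$, viewed as a polynomial in $x_1,\dots,x_{n-1}$, depends only on $x_1,x_2$ up to radical — equivalently, after a suitable purely-inseparable adjustment in positive characteristic, that $g_i \in K[x_1,x_2]\cdot(\text{something})$, or more simply that the partial derivatives $\partial g_i/\partial x_j$ for $j \ge 3$ lie in the ideal-theoretic span forced by $g_i$ being "cylindrical." The cleanest route is: $g$ fails NCC $\iff$ $f(x_1,\dots,x_{n-1},b)$, as a polynomial in $x_1,\dots,x_{n-1}$, is cylindrical over a curve with the cylinder directions being (a translate-independent) $L_0$ $\iff$ certain coefficients, which are polynomial in $b$ of degree $\le d$ (each coefficient of $f$ in $x_n$ contributes, and $x_n=b$ is plugged in, so the coefficients of $g$ are polynomials in $b$ of degree $\le d$), all vanish. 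One of these polynomials in $b$ is not identically zero, precisely because $f|_{V(\ell)} = g|_{b=0}$ is NCC by hypothesis. Hence the set of bad $b$ is the zero set of a nonzero one-variable polynomial of degree $O(d^{C_K})$, giving at most $O(d^{C_K})$ values. (The exponent $C_K=1$ in characteristic zero reflects that the coefficients of $g$ are literally degree-$\le d$ polynomials in $b$ with no inseparable loss; the $C_K=2$ in positive characteristic absorbs the overhead from handling $p$-th power phenomena and from the reduction that excluded $g_d$ being a power of a linear form.)

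The main obstacle I anticipate is handling the positive-characteristic subtleties cleanly — specifically, ruling out the degenerate case where $g_d = f_d|_{V(\ell)}$ fails to pin down the line $L_0$, which is why the hypothesis "$f_d|_{V(\ell)}$ is not a power of a $K$-linear form up to a constant" appears. When $g_d$ is a power of a single linear form, the set $S_d$ for the top part is a single point, lying on infinitely many lines, so the argument above breaks; but this case is explicitly excluded. A secondary technical point is making rigorous, in characteristic $p$, the equivalence between "$W(g_i) = H_S$ with $S$ on a fixed line" and a clean polynomial condition, since $V(g_i)_{\redu}$ being a union of hyperplanes does not immediately mean $g_i$ itself is a product of linear forms (it could be a $p$-th power times such a product). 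The fix is to phrase everything in terms of $W(f) = W(g)$ directly (reduced loci), use Proposition \ref{prop:characterization of delta(P)} to control the line, and observe that the locus of hyperplanes appearing is cut out by the vanishing of finitely many coefficients of $f$ after the linear change of variables — coefficients which are visibly polynomial in $b$ of controlled degree. Once this is set up, the counting is immediate from the fact that a nonzero univariate polynomial of degree $O(d^{C_K})$ has at most that many roots, and non-vanishing of the relevant polynomial follows from the assumed NCC-ness at $b=0$.
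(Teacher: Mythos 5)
Your proposal only addresses part (3) of the proposition (parts (1) and (2) are not touched), so at best it is a partial argument; within part (3) there is a genuine gap in characteristic zero. Your whole strategy rests on the line $L_0$ being pinned down by the top-degree part $g_d=f_d|_{V(\ell)}$, but the hypothesis ``$f_d|_{V(\ell)}$ is not a constant times a power of a $K$-linear form'' is only imposed when $\charac K>0$. In characteristic zero the case $g_d=c\lambda^d$ with $\lambda$ a $K$-linear form is allowed, the set $S$ attached to $g_d$ is then a single $K$-rational point lying on infinitely many $K$-lines, and your coefficient-vanishing count (which needs a single fixed line, hence a fixed adapted coordinate system, for all bad $b$) does not apply; ``or one argues directly'' is not a proof. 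The paper avoids this entirely by treating characteristic zero with a different mechanism: the rank criterion of Lemma~\ref{lem:H.cond.partial.derivatives} applied to the family $\tilde f(x,b)$, exactly as in part (2), which yields the $O(d)$ bound with no hypothesis on $g_d$. A second, smaller error: in positive characteristic you assert that the singleton case ``is explicitly excluded.'' It is not --- only powers of \emph{$K$-rational} linear forms are excluded; $g_d$ can still be a power of a purely inseparable linear form $\lambda$ not defined over $K$ (cf.\ the example after Proposition~\ref{prop:preservation.H.cond}). In that case $S=\{[\lambda]\}$ is a single non-$K$-rational point, and one must argue, as the paper does via Proposition~\ref{prop:characterization of delta(P)}, that $\delta_K([\lambda])=2$ so that there is a \emph{unique} $K$-line through it, which restores the pinning; you gesture at this in your ``fix'' paragraph but your main argument as written treats this case as excluded.

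Setting aside these gaps, your counting step is genuinely different from the paper's and, once the line is pinned, it is sound and simpler: after a $K$-linear change of the $y$-coordinates adapted to $L_0$, failure of NCC at $b$ forces every nonzero homogeneous part $g_{ib}$ to have all its $\overline K$-irreducible factors equal to linear forms in $y_1,y_2$ (an irreducible polynomial over an algebraically closed field whose zero set is a hyperplane is that linear form up to a constant --- no inseparability caveat is needed here, contrary to your worry), hence $g_b\in K[y_1,y_2]$, i.e.\ the coefficients of all monomials involving $y_3,\ldots,y_{n-1}$, which are polynomials of degree $\le d$ in $b$, vanish; NCC of $g_0$ makes one of them nonzero, giving at most $d$ bad values. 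The paper instead chooses a $K(b)$-irreducible, $K[b]$-primitive factor $h_b$ of some homogeneous part and bounds the bad set via the incidence variety $\Gamma\subset\AA^1_K\times L$, obtaining $2d^2$ in positive characteristic; your route, carried out carefully (with the $\delta_K$ argument for the inseparable singleton case), would be shorter and would even improve the positive-characteristic exponent. But as submitted, the characteristic-zero case and the inseparable singleton case are not handled, so the proof of part (3) is incomplete, and parts (1)--(2) are missing altogether.
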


\begin{proof}
First note that $f_d|_{V(\ell)} = 0$ if and only if $\ell$ is a factor of $f_d$. There are thus at most $d$ linear forms $\ell$ such that $f_d|_{V(\ell)} = 0$, which we avoid in the proof below.\\

(1). We use \Cref{prop:equiv.H.cond}.
If $W(f)$ contains a geometrically  irreducible component $Z$ of degree at least $2$, by Lemma \ref{lem:Bertini}, there exists a form $F \in \ZZ[x_1,\ldots,x_n]$  in the coefficients of $\ell$ of degree at most $O_n(d^{C_K})$, such that if $F(\ell) \neq 0$, then $Z \cap V(\ell)$ is geometrically irreducible, and therefore  $f|_{V(\ell)}$ is NCC over $K$.

Assume now that $W(f)$ contains at least three hyperplanes whose corresponding linear forms are linearly independent, and denote the linear space they span in $(\PP^{n-1}_K)^*$ by $V$. Then $W(f|_{V(\ell)})$ contains three such hyperplanes for any $K$-linear form $\ell$ which  does not lie on $V$,  and for which $f_i|_{V(\ell)}\neq 0$ for indices  $i$ where $W(f_i)$ contains one of the hyperplanes as above. It follows by \Cref{prop:equiv.H.cond} that $f|_{V(\ell)}$ is NCC over $K$ in this case.
We may thus find a hypersurface
of degree at most $O(d)$
in $(\PP^{n-1}_K)^*$ such that $f|_{V(\ell)}$ is NCC over $K$ for every $\ell$ not lying on it.


Finally, assume that $W(f)=H_S$, where $S$ is a finite set contained in a $\overline{K}$-line $L \subseteq (\PP^{n-1}_K)^*$ which is not defined over $K$.
By Proposition \ref{prop:H over k is equivalent to H over Ksep}, we may assume $K$ is separably closed and therefore if $\charac(K)=0$ we are done by  \Cref{prop:equiv.H.cond}.
It follows by Proposition \ref{prop:characterization of delta(P)} that
there exists a hyperplane $V(\ell') \subset W(f)$, corresponding to a point $\ell'=(a_1:\ldots:a_n)\in (\PP^{n-1}_{K})^*$, satisfying $\delta_K(\ell')\ge 2$. If $\delta_K(\ell')\ge 3$, then without loss of generality we may assume that $a_1, a_2, a_3$ are linearly independent over $K$. We may thus choose a $K$-linear form $\ell(x)=\sum b_i x_i$ of height at most $O(d)$ such that $\delta_K(\ell'|_{V(\ell)})\geq 3$ by taking $b_i \neq 0$ for any $i \geq 4$. In particular, any $\ell$ not lying on the hypersurface cut in $(\PP^{n-1}_{K})^*$ by $\prod b_i =0$ is as desired.
We then get that $ V(\ell')\cap{V(\ell)} \subset W(f|_{V(\ell)})$, considered as a point in $(\PP_K^{n-2})^*$, does not lie on any line defined over $K$ by Proposition  \ref{prop:characterization of delta(P)}, and therefore $f|_{V(\ell)}$ is NCC.
We may thus assume that $\delta_K(\ell') \le 2$ for every hyperplane $V(\ell') \subset W(f)$.
Fix hyperplanes $H_{\ell_1}, H_{\ell_2} \subset W(f)$ with $\delta_K(\ell_1)=2$ and such that $\ell_2$ does not lie on the unique $K$-line  $L'\subset (\PP^{n-1}_K)^*$ containing $\ell_1$.
Note that  $ \ell_2 \notin L\subset (\PP^{n-1}_K)^*$ if and only if $V_{L'}:=\{\ell_1'(x)=\ell_2'(x)=0\} \not \subset H_{\ell_2}$, where $\ell_1'$, $ \ell_2'$ are two linearly independent $K$-linear forms lying on $L'$. We may thus choose any $\ell$ such that the conditions  $H_{\ell_2} \cap V_{L'} \not \subset V(\ell)$,
$\delta_K({\ell_1}|_{V(\ell)})=2$
and $f_d|_{V(\ell)} \neq 0$ hold.
 Therefore, noting that $\dim (H_{\ell_2} \cap V_{L'})=n-3$ and using arguments similar to the $\delta_K \ge 3$ case, we may find a hypersurface in $(\PP^{n-1}_K)^*$ of degree at most $O_n(d)$ such that every linear form not lying on it satisfies our desired condition. The final claim follows from the Schwartz--Zippel bound. \\


(2). If $W(f)$ contains a non-linear geometrically irreducible component or at least three hyperplanes whose corresponding linear forms are linearly independent, we may
follow the proof of (1) with the linear forms $\tilde{\ell}_a(x):=\ell_1(x) + a\ell_2(x), a\in K$, noting that $f|_{V(\tilde{\ell}_0)}$ is NCC.
We may then bound the number of values of $a$  for which $f|_{V(\tilde{\ell}_a)}$ is not NCC by $O_n(d^{C_K'})$ with $C_K'$ as in (1).
If we are not in one of the above two situations, then $W(f)=H_S$ where $S \subset L$ is a finite set contained in a line $L \subseteq (\PP^{n-1}_{K})^*$ not defined over $K$. In particular, each homogeneous part $f_i$ of $f$ splits into linear factors.
 Assume towards a contradiction that  $f|_{V(\ell_1+a_i\ell_2)}$ is not NCC for distinct values $a_1 \neq a_2$ in $K$. Then there exist $K$-lines $L_i \subset (\mathbb{P}^{n-2}_K)^*$ such that $W(f|_{V(\ell_1+a_i\ell_2)})=H_{S_i}$ and ${S_i} \subset L_i$ for $i=1,2$. It follows that $S \subset P_i$, where $P_i$ is the $K$-plane in $(\PP^{n-1}_{K})^*$ containing the preimage of $L_i$ under the projection 
 \[
 \mathrm{pr}_i:(\PP^{n-1}_{K})^*\setminus \{ \ell_1+a_i\ell_2\} \to (\PP^{n-2}_{K})^*: \ell \mapsto \ell_{|_{V(\ell_1+a_i\ell_2)}}.
 \]
Note that we must have  $P_1 \neq P_2$, as otherwise we get that $\ell_1$ lies in $P_1$,
implying that $P_1$ gives rise to a $K$-line in $(\PP^{n-2}_{K})^*$ containing $W(f|_{V(\ell_1)})$, but $f|_{V(\ell_1)}$ is NCC.
We thus get $S \subset P_1 \cap P_2$, where $P_1 \cap P_2$ is either a $K$-line or a $K$-point. Since $f$ is NCC, it follows there cannot be such distinct values $a_1, a_2$.

To improve this bound when $\charac (K) = 0$, we use \Cref{lem:H.cond.partial.derivatives} as follows.
Write $\ell_1(x)=\sum a_i x_i$, and $\ell_2(x)=\sum b_i x_i$, and without loss of generality assume $a_1 \neq 0$.
Substituting 
\[
x_1:=-\frac{1}{a_1+a b_1}\sum_{i=2}^{n} (a_i + a b_i) x_i
\]
into $f$, we get a family of polynomials $\tilde{f}(x_2,\ldots,x_n,a)$ in $n-1$ variables depending on the parameter $a \in K$ (if $b_1 \neq 0$, we assume $a_1 +a b_1 \neq 0$). Let $V_{\le d-1}$ be the vector space of polynomials of degree at most $d-1$ taken with the standard basis of monomials and set $r:=\dim V_{\le d-1}$. Consider the $r\times (n-1)$ matrix $M(a)$ whose columns are the partial derivatives of $\tilde{f}$ written in terms of this basis. By \Cref{lem:H.cond.partial.derivatives}, for a given $a\in K$, the polynomial $\tilde{f}(x,a)$ is NCC if and only if $M(a)$ has rank $3$, or equivalently $M(a)$ has a non-vanishing $3\times 3$ minor. By assumption, $M(a)|_{a=0}$ has a non-vanishing $3\times 3$ minor $M'(0)$. Since after clearing denominators $M'(a)=0$ is a polynomial equation in $a$ of degree at most $6d$, there can be at most $O(d)$ values of $a$ for which $M(a)$ has rank$\le2$. This proves the claim with the required bound.\\

(3). If $\charac K = 0$, one can consider the family $\tilde{f}(x,b)$ obtained by substituting $\ell(x)=b$ into $f$, and reason as in (2) using \Cref{lem:H.cond.partial.derivatives}. We may therefore assume that $\charac K = p > 0$. Using \Cref{prop:equiv.H.cond}, we may further assume that $K$ is separably closed.

Write $g_b(y):=f|_{V(\ell-b)}$ for the polynomial in new variables $y = (y_1, \ldots, y_{n-1})$ and coefficients in $K[b]$ obtained by substituting the equation $\ell(x)=b$ into $f$.
Let $g_b(y) = \sum_i g_{ib}(y)\in K[b][y]$, where $g_{ib}(y)$ is homogeneous of degree $i$. Note that the homogeneous part $g_{db}(y)$ of top degree $d$ is independent of $b$, so we may write $g_{d0}(y)$ instead of $g_{db}(y)$.
If $g_{d0}$ is NCC, then it is clear that $g_b(y)$ is NCC for every $b\in K$. We can thus assume that $g_{d0}$ is not NCC, and therefore by  \Cref{prop:equiv.H.cond}
 there exists a line $L \subset (\PP^{n-2}_K)^*$ defined over $K$ and a finite set $S \subset L(\overline{K})$ such that $W(g_{d0})=H_S$.
We claim that this line $L$ is unique.
If $S$ contains at least two distinct points, they uniquely determine the line $L$. Otherwise, $S=\{\ell_1\}$ is a singleton, and by our assumption that  $g_{d0}=f_d |_{V(\ell)}$ is not a power of a $K$-linear form, $\ell_1$ is not defined over $K$. By Proposition \ref{prop:characterization of delta(P)}, since $g_{d0}$ is not NCC, it follows that $\delta_K(\ell_1)=2$, implying that $L$ is unique.
It now follows by the above that if $g_{b}(y)$ is not NCC for some $b \in K$, then there exists a finite set $S_{b} \subset L(\overline{K})$ such that $W(g_{b})=H_{S_{b}}$. Crucially, for every value $b \in K$ for which $g_{b}$ is not NCC, we must always use the same $K$-line $L$ in \Cref{prop:equiv.H.cond}.

Since $g_0(y)$ is NCC, $W(g_0)$ is not a finite union of hyperplanes corresponding to points in $L(\overline{K})$. It follows that there exists an $i$ such that $g_{i0}(y)\neq 0$ and $g_{ib}(y)$ has a $K(b)$-irreducible factor $h_b(y)$ which is primitive over $K[b]$, such that $W(h_b|_{b=0})\neq H_{S}$ for any finite set $S \subset L(\overline{K})$.
We note that there can be at most $d$ values of $b$ for which $g_{ib}(y)$ is the zero polynomial, and that these values do not add to our final bound. Consider the closed incidence variety $\Gamma$:
\[
\Gamma = \{ (b,\lambda) \in \mathbb{A}^1_K \times L : V(\lambda) \subseteq V(h_b)\} \subset \mathbb{A}^1_K \times L.
\]
We have $\dim \Gamma \le 1$, and we may write $\Gamma =C_0 \cup C_1$ where each $C_i$ is equidimensional of dimension $i$, and $C_1 \cap C_0 = \varnothing$.
Since $h_b$ is $K[b]$-primitive, $h_{b}$ is not identically zero for any $b \in K$, and thus $\Gamma$ cannot contain lines of the form $\{b_0\} \times L$.
Let $u_k(b,\lambda)$ be the defining equations for $\Gamma$ in $\AA^1_K \times L$ obtained by demanding that the restriction of $h_b(y)$ to the hyperplane $V(\ell_\lambda) \subset \AA^{n-1}_K$ is the zero polynomial. Note that each $u_k(b,\lambda)$ is of degree at most $d$ in both variables, and that since $C_1$ is a hypersurface in $\mathbb{A}^1_K \times L$, it is defined by a single equation $u(b,\lambda)$ which must be a factor of each $u_k$. We conclude that $C_0$ is cut in $\AA^1_K \times L$ by the equations $u_k/u$, and therefore it is of size at most $2d^2$.

Let $\mathrm{proj}:\AA^1_K \times L \to \AA^1_K$ denote the projection map. We claim now that the only values of $b$ for which $W(h_b)$ may be contained in a finite union of hyperplanes parameterized by points in $L$, and therefore $g_b$ may not be NCC, are values of $b$ which lie in $\mathrm{proj}(C_0)$.
Indeed, set $e = \deg_y(h_b(y))$ and consider the degree of the map $\mathrm{proj}|_{C_1}: C_1\to \AA^1_K$. It cannot be larger than $e-1$, since $h_b|_{b=0}(y)$ does not decompose as a product of linear factors parametrized by points in $L$, and thus the fiber of $\mathrm{proj}|_{C_1}$ over $0$ cannot be larger than $e-1$, where we also count multiplicities. It follows the fiber over any $b \in \AA^1_K$ has size at most $e-1$, counting multiplicities.
We conclude there can be at most $\#C_0\le 2d^2$ possible values for which $g_b=f|_{V(\ell-b)}$ is not NCC, completing the proof.
\end{proof}
Note that we cannot remove the hypothesis that $f_d |_{V(\ell)}$ is not a power of a linear form in \Cref{prop:preservation.H.cond}(3) if $\charac K > 0$, as the next example shows.
\begin{example}
Let $K = \FF_p(t)$, and take $m\geq 1$ and $d > p^m + 1$. Define
\[
f(x, y, z, w) = (y+z+w)^d + x(y^{p^m}+ z^{p^m} + (t^{p^m}+1)w^{p^m})+y^{p^m}+ tz^{p^m}+ (t^{p^m}+ t)z^{p^m}.
\]
Then we have that
\begin{align*}
W(f) =& \{y+z+w = 0\}\cup \{x=0\}\cup \{y+ z+(t+1)w = 0\}\\
& \cup \{y+t^{1/p^m}z + (t+t^{1/p^m})w = 0\}.
\end{align*}
Hence by \Cref{prop:equiv.H.cond}, $f$ is NCC. Consider the linear form $\ell = x$. Then
\[
W(f|_{V(x)}) = \{y+z+w = 0\} \cup \{y+t^{1/p^m}z + (t+t^{1/p^m})w = 0\},
\]
so that $f|_{V(x)}$ is still NCC. However, if $b\in \FF_p(t)$ then
\begin{align*}
W(f|_{V(x-b)}) = \{y+z+w=0\}\cup \left\{y + \left(\frac{b+t}{b+1}\right)^{1/p^m}z + \left(t + \left(\frac{b+t}{b+1}\right)^{1/p^m}\right)w = 0\right\},
\end{align*}
and if $b=\frac{t - u^{p^m}}{u^{p^m}-1}$ for any $1 \neq u\in \FF_p(t)$, then $f|_{V(x-b)}$ is not NCC.
\end{example}

\subsection{Dimension growth}
\label{sec: Dimension growth for global fields}

Throughout this section we fix a global field $K$ with ring of integers $\cO_K$. The goal of this section is to outline a proof of \Cref{thm:fd:K}, following along the lines of \Cref{sec:proof}. We will however be slightly less precise in optimizing all constants. The base case for induction is again the case of affine surfaces. The key here is counting on lines on the surface.

\begin{remark}\label{rem:Q:K}
In the proofs of \Cref{prop:1} and \Cref{thm:fd:K}, we will use some results of the previous sections that adapt naturally mutatis mutandis to the case of global fields,  but which we leave to the reader to make explicit. It concerns e.g.\,\Cref{prop:aux.affine.poly} which generalizes to \cite[Theorem 5.14]{Pared-Sas}, and \Cref{prop:pila.hypersurface,prop:pila} in which we may need to use Noether forms in positive characteristic (see \Cref{thm:noether.forms}).
\end{remark}

\begin{prop}\label{prop:1}
Let $C_K = 7$ if $\charac K = 0$ and $C_K = 11$ if $\charac K > 0$. There exist constants $c$ and $\kappa$ depending on $K$ such that we have the following for all polynomials $f$ in $\cO_K[y_1,y_2,y_3]$ of degree $d\geq 3$ whose homogeneous part of highest degree $f_d$ is $2$-irreducible over $K$, and such that $f$ is both irreducible and NCC over $K$; For every integer $B \ge 2$,
$$
N_{\rm aff}(f,B)\leq c d^{C_K} B, \quad \mbox{ when $d\ge 5$},
$$
$$
N_{\rm aff}(f,B)\leq c (\log B)^\kappa B, \quad  \mbox{ when $d=4$},
$$
and,
$$
N_{\rm aff}(f,B)\leq c (\log B)^\kappa  B^{2/\sqrt{3}} \quad  \mbox{ when $d=3$}.
$$
If $f_d$ is only $1$-irreducible over $K$, we  have that
\begin{align*}
N_{\rm aff}(f, B) \leq c d^{C_K} B (\log B)^\kappa, \quad \mbox{ when $d\geq 4$}, \\
N_{\rm aff}(f,B)\leq c (\log B)^\kappa  B^{2/\sqrt{3}} \quad  \mbox{ when $d=3$}.
\end{align*}
\end{prop}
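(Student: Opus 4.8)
The plan is to run the proof of \Cref{prop:0.4strong} essentially verbatim, replacing every ingredient used there by its global field analogue; by \Cref{rem:Q:K} all of these analogues hold with the same proofs (up to the harmless rescaling $B\mapsto O_K(1)B$), so the only genuine work is bookkeeping the dependence on $d$ and on $K$ and checking that the NCC hypothesis over $K$ plays exactly the role it played over $\QQ$. Concretely, I would first apply the global field version of \Cref{prop:aux.affine.poly} (that of \cite[Lem.\,5.12]{Pared-Sas}, \cite[Lem.\,3.5]{Vermeulen:p}) to $f$ to obtain an auxiliary polynomial $g\in\cO_K[y_1,y_2,y_3]$ of degree $\ll_K d^{a}B^{1/\sqrt d}\log B$ for a suitable constant $a$, coprime to $f$ and vanishing on all $x\in[B]_{\cO_K}^3$ with $f(x)=0$, and set $C=V(f)\cap V(g)$, a possibly reducible affine curve of degree $\ll_K d^{a+1}B^{1/\sqrt d}\log B$.

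Then I would split on whether $d$ is large or small relative to $\log B$. If $d>(\log B)^2$ then $B^{1/\sqrt d}=O(1)$, so $\deg C\ll_K d^{a+1}$ and the Schwartz--Zippel bound (\Cref{prop:schwarz--zippel}, which is geometric and holds over $K$ up to $B\mapsto O_K(1)B$) gives $N_{\rm aff}(f,B)\le N_{\rm aff}(C,B)\ll_K d^{a+1}B$, of the claimed shape for $d\ge5$ and $O_K(B)$ when $d=3,4$. If $d\le(\log B)^2$ I decompose $C$ into its $\overline K$-irreducible components $C_i$ of degree $d_i$ and treat three ranges of $d_i$: (i) the union of the lines among the $C_i$ is controlled by the global field analogue of \Cref{prop1:Brow-Heath-Salb}, contributing $\ll_K d^{C_K}B$ when $f_d$ is $2$-irreducible over $K$ and $\ll_K d^{C_K}B\log B$ when $f_d$ is only $1$-irreducible over $K$; (ii) components with $2\le d_i\le\log B$ are controlled by the global field analogue of \Cref{cor:counting.affine.curve}, whose total contribution is $\ll_K\deg(C)\,B^{1/2}(\log B)^{O(1)}\ll_K B^{1/2+1/\sqrt d}(\log B)^{O(1)}$; (iii) components with $d_i>\log B$ are controlled by the global field analogue of \Cref{thm:BCN-d}, giving $N_{\rm aff}(C_i,B)\ll_K d_i^2(\log B)^{O(1)}$ since there $B^{1/d_i}=O(1)$, hence total $\ll_K(\sum_i d_i)^2(\log B)^{O(1)}\le\deg(C)^2(\log B)^{O(1)}\ll_K d^{2(a+1)}B^{2/\sqrt d}(\log B)^{O(1)}$. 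Distinguishing $d\ge5$ (where $1/2+1/\sqrt d<1$ and $2/\sqrt d<1$, so (ii), (iii) are $O_K(B)$ and the term $d^{C_K}B$ from (i) dominates), $d=4$ (where all three become $B(\log B)^{O(1)}$), and $d=3$ (where (iii) gives $B^{2/\sqrt3}(\log B)^{O(1)}$ and dominates (i), (ii), using $1/2+1/\sqrt3<2/\sqrt3$) yields the five displayed bounds.

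The global field analogue of \Cref{prop1:Brow-Heath-Salb} is proved by copying the $\QQ$ argument: a line carrying more than one point of $[B]_{\cO_K}^3$ has a primitive $\cO_K$-direction $v$ of $K$-height $\ll_K B$, hence carries $\ll_K B/H(v)$ such points; writing $f(a+tv)=\sum_i c_i(a,v)t^i$, vanishing of $f$ on the line forces $c_1(a,v_0)\equiv0$ for every direction $v_0$ on the curve $V(f_d)\subset\PP^2_K$, and the hypothesis that $f$ is NCC over $K$ — equivalently, by \Cref{prop:equiv.H.cond} together with \Cref{prop:H over k is equivalent to H over Ksep}, NCC over $K^{\sepc}$ — prevents $c_1(\cdot,v_0)$ from being identically zero, so there are at most $d(d-1)$ lines on $V(f)$ in a fixed direction; then one factors $f_d=\prod_\ell g_\ell$ over $K$, counts $K$-rational points of bounded height on each $V(g_\ell)\subset\PP^2_K$ using the global field analogues of \cite{BCN-d} and of Schwartz--Zippel, and sums by parts exactly as over $\QQ$, the $2$-irreducibility of $f_d$ (so $\deg g_\ell\ge3$) being what makes the resulting series converge and gives the clean linear-in-$B$ bound; with only $1$-irreducibility one loses a factor $\log B$.

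The obstacle is not conceptual but is the careful tracking of constants, in particular the fact that the effective Noether forms of \Cref{thm:noether.forms} have degree $O(d^6)$ in positive characteristic instead of $O(d^2)$; this propagates through the global field versions of \Cref{prop:pila.hypersurface}, \Cref{prop:pila} and of the Bertini-type inputs, and is what forces the worse value $C_K=11$ in characteristic $p>0$ against $C_K=7$ in characteristic $0$ (we do not optimize, so it suffices that $C_K$ dominate $a+1$ and the exponent of $d$ coming from step (i)). A further point that must be checked, as in \cite{Sedunova}, \cite{CFL}, \cite{Pared-Sas}, is that the Bombieri--Pila-type curve counts over global fields — especially in positive characteristic — are genuinely uniform in the curve and carry the quadratic power of $d$ used in step (iii), since it is precisely this quadratic (as opposed to cubic) dependence that makes the $d\ge5$ case close; granting this, the argument goes through.
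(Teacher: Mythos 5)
Your overall architecture coincides with the paper's proof: transfer \Cref{prop:0.4strong} to $K$ using the global-field auxiliary polynomial (the paper invokes \cite[Thm.\,5.14]{Pared-Sas}), the global analogue of the line-count \Cref{prop1:Brow-Heath-Salb}, and global-field curve counts with quadratic dependence on the degree (the paper replaces \cite{BCN-d} by the corresponding result of \cite{BinCluKat}, which supplies exactly the input you flag at the end), with the same three ranges of component degrees and the same bookkeeping.

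There is, however, a genuine gap in your line-counting step in positive characteristic, and it is precisely the ``slight subtlety'' that the paper's proof of \Cref{prop:1} is devoted to. You claim that NCC over $K$ (equivalently over $K^{\sepc}$) prevents $c_1(\cdot,v_0)=\sum_i v_{0,i}\,\partial f/\partial x_i$ from vanishing identically for a $K$-rational direction $v_0\in V(f_d)(K)$. In characteristic $p$ this inference is false: directional derivatives only detect separable dependence, and passing to $K^{\sepc}$ does not help, since the failure is an inseparability phenomenon. For example, over $K=\FF_2(t)$ the polynomial $f=x_1x_3^2+x_1^3+x_2^3+x_1x_2$ is irreducible over $K$ and NCC, its top part $f_3=x_1x_3^2+x_1^3+x_2^3$ is absolutely irreducible (hence $2$-irreducible over $K$), and $v_0=(0:0:1)$ lies on $V(f_3)(K)$, yet $c_1(a,v_0)=\partial f/\partial x_3(a)=2a_1a_3\equiv 0$; so your bound of $d(d-1)$ lines per rational direction is unjustified as argued. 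The paper's repair is to use that not \emph{all} of $c_1(\cdot,v_0),\ldots,c_{d-1}(\cdot,v_0)$ can vanish identically --- otherwise $f(a+tv_0)=f(a)$ identically, so $f$ would be a cylinder in the $K$-rational direction $v_0$, contradicting NCC over $K$ --- and to bound the lines in direction $v_0$ by the degree of $X\cap\{c_i=0\}$ for a non-vanishing $c_i$ (in the example above $c_2=a_1$). The other half of the subtlety, namely that $X$ may fail to be NCC over $\overline K$ and then contains infinitely many lines in a non-$K$-rational direction, is harmless because such lines carry at most one $K$-point and are absorbed in $\#I$; your $I_1/I_2$ split already takes care of this, exactly as in the paper. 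With that one correction your argument matches the paper's proof; in characteristic zero your version is already correct as written.
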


\begin{proof}
The strategy is the same as in Proposition \ref{prop:0.4strong}. First, one counts $\cO_K$-points which are contained in lines on the surface $X=V(f)$ as in \Cref{prop1:Brow-Heath-Salb}. There is a slight subtlety between being NCC over $K$ and over an algebraic closure $\overline{K}$. If $X$ is NCC over $\overline{K}$ then we may reason exactly as in the proof of \Cref{prop1:Brow-Heath-Salb}, by noting that not all $c_i(a,v)$ as in \Cref{prop1:Brow-Heath-Salb} vanish. If $X$ is not NCC over $\overline{K}$ but it is NCC over $K$, then this means that $X$ contains infinitely many lines whose direction $v$ is not defined over $K$. In particular, any such line has at most one $K$-point, and \Cref{prop1:Brow-Heath-Salb} still holds (since counting on these lines may be absorbed in $\#I$).
One obtains that if $I$ is a finite set of lines on $X$, then
\[
N_{\rm aff}(X\cap (\cup_{L\in I}L), B) \ll d^{4} B + \#I.
\]
Note  that for the proof of this result, one should replace the use of~\cite[Theorem 2]{BCN-d} for counting on projective curves from by the analogous result from \cite{BinCluKat}.  

Once we have the above result for counting integral points on lines on $X$, one proceeds in a similar fashion to obtain an auxiliary polynomial $g$ not divisible by $f$ and vanishing on all integral points on $X$ of height at most $B$. Such a $g$ exists by~\cite[Thm.\,5.14]{Pared-Sas}. Following the proof of \Cref{prop:0.4strong} then gives the result.
\end{proof}

We now sketch the proof of dimension growth, by cutting with hyperplanes while preserving NCC and $2$-irreducibility via our effective Hilbert irreducibility results. We first prove a general version of \Cref{cor:preserve.H.cond.with.HIT.Q}.
\begin{cor}\label{cor:preserve.H.cond.with.HIT}
Let $f \in \cO_K[x_1, \ldots, x_n]$ be a polynomial  of degree  $d\geq 2$ with $n\geq 4$. Assume that $f$ is NCC and that $f_d$ is  $r$-irreducible over $K$ for some integer $r \ge 1$.
Let $C_K:=3$ if $\charac(K)=0$, and $C_K:=7$ otherwise.
Then there exist linearly independent linear forms $\ell, \ell'$ of height at most $O_n(d^{C_K})$ and  $ t\in [O_n(d^{C_K})]_{\cO_K}$
 such that $f_d|_{V(\ell- t\ell')}$ is $r$-irreducible, and such that there are at most $O_n(d)$ values $b\in K$ for which $f|_{V(\ell- t\ell'- b)}$ is not NCC.
\end{cor}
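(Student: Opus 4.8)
The plan is to assemble this corollary from the effective Bertini--Hilbert statement \Cref{thm:HIT.v2}, applied to the top-degree form $f_d$ in order to control $r$-irreducibility after restriction, together with \Cref{prop:preservation.H.cond}, which controls the NCC property both after restricting to a hyperplane through the origin and after translating that hyperplane. First I would apply \Cref{thm:HIT.v2} to $f_d$, which is homogeneous, $r$-irreducible over $K$, and in $n\ge 4\ge 3$ variables (using, as there, that the absolutely irreducible factors of $f_d$ are defined over $K^{\sepc}$); this yields a hypersurface $W_1\subset(\PP^{n-1}_K)^*$ of degree $O_n(d^{C_K})$, with $C_K=3$ when $\charac K=0$ and $C_K=7$ otherwise, such that $f_d|_{V(\ell)}$ is $r$-irreducible over $K$ for every $K$-linear form $\ell\notin W_1$. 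Next I would apply \Cref{prop:preservation.H.cond}(1) to $f$, obtaining a hypersurface $W_2\subset(\PP^{n-1}_K)^*$ of degree $O_n(d^{C_K})$ with the same $C_K$, such that $\ell\notin W_2$ forces $f|_{V(\ell)}$ to be NCC over $K$ and $f_d|_{V(\ell)}$ to be not identically zero. Set $W=W_1\cup W_2$, still of degree $O_n(d^{C_K})$.

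To produce the triple $(\ell,\ell',t)$, I would first invoke a Schwartz--Zippel count in $(\PP^{n-1}_K)^*$ over the global field $K$ (the function field case handled mutatis mutandis, see \Cref{rem:Q:K}, and note this is essentially the final assertion of \Cref{prop:preservation.H.cond}(1) applied to $W$ in place of $W_2$) to get a $K$-linear form $\ell$ of height $O_n(d^{C_K})$ with $\ell\notin W$. Then I would pick $\ell'$ to be a coordinate form $x_j$ with $\ell,\ell'$ linearly independent, which exists since $\ell\ne 0$, so that $\ell'$ has height $1$. The pencil $M=\{\ell-t\ell':t\in K\}\cup\{[\ell']\}$ in $(\PP^{n-1}_K)^*$ passes through $[\ell]\notin W$, hence is not contained in $W$, and therefore meets $W$ in at most $\deg W=O_n(d^{C_K})$ points; thus some $t\in\cO_K$ of height $O_n(d^{C_K})$ (one may even take $t=0$) gives $\tilde\ell:=\ell-t\ell'\notin W$. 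By the defining properties of $W_1$ and $W_2$, we then have that $f_d|_{V(\tilde\ell)}$ is $r$-irreducible over $K$, that $f|_{V(\tilde\ell)}$ is NCC over $K$, and that $f_d|_{V(\tilde\ell)}\not\equiv 0$.

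Finally I would apply \Cref{prop:preservation.H.cond}(3) to the $K$-linear form $\tilde\ell$. Its hypotheses are met: $f|_{V(\tilde\ell)}$ is NCC over $K$ and $f_d|_{V(\tilde\ell)}\not\equiv 0$; and when $\charac K>0$, the form $f_d|_{V(\tilde\ell)}$ is not a constant times a power of a $K$-linear form, since it is $r$-irreducible with $r\ge 1$ and homogeneous of degree $d\ge 2$ (a nonzero restriction of a degree-$d$ homogeneous form to a hyperplane through the origin still has degree $d$). Hence there are at most $O(d^{C_K'})$ elements $b\in K$, with $C_K'=1$ if $\charac K=0$ and $C_K'=2$ otherwise, hence in particular $O_n(d)$ in characteristic $0$, for which $f|_{V(\tilde\ell-b)}=f|_{V(\ell-t\ell'-b)}$ is not NCC over $K$. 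This gives the conclusion, with $\ell,\ell'$ linearly independent of height $O_n(d^{C_K})$ and $t\in[O_n(d^{C_K})]_{\cO_K}$.

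The substantive work has already been done in \Cref{thm:HIT.v2} and \Cref{prop:preservation.H.cond}, so I expect no genuine obstacle here; the one point requiring care is the effective Schwartz--Zippel step producing a linear form of height $O_n(\deg W)$ off the hypersurface $W$ \emph{uniformly over all global fields}, in particular over function fields with a small field of constants, where one must check there are enough linear forms of that height to avoid a hypersurface of degree $\deg W$. This is standard and already used, in the guise of \Cref{prop:preservation.H.cond}(1), so it amounts to bookkeeping, together with keeping the exponents $C_K$ from the three inputs consistent.
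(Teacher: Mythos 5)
Your proposal is correct and follows essentially the same route as the paper: combine \Cref{thm:HIT.v2} with \Cref{prop:preservation.H.cond}(1) into one avoidance hypersurface $W$ of degree $O_n(d^{C_K})$, pick $\ell\notin W$ of small height via Schwartz--Zippel, choose $t$ so that $\ell-t\ell'$ still avoids $W$ (the paper instead invokes \Cref{prop:preservation.H.cond}(2) at this step, a negligible difference), and conclude with \Cref{prop:preservation.H.cond}(3). Note that, exactly as in the paper's own proof, part (3) only yields $O(d^{2})$ bad values of $b$ in positive characteristic, and the separability hypothesis of \Cref{thm:HIT.v2} is used implicitly, so these residual discrepancies with the stated $O_n(d)$ bound are inherited from the paper rather than introduced by you.
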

\begin{proof}
By combining \Cref{prop:preservation.H.cond}(1) and \Cref{thm:HIT.v2}, we may find a hypersurface $W'\subset (\PP^{n-1}_{K})^*$ of degree at most $O_n(d^{C_K})$, such that if $\ell$ is a linear form avoiding $W'$, then $f|_{V(\ell)}$ is NCC and $f_d|_{V(\ell)}$ is $r$-irreducible and not identically zero.
 Using the Schwartz--Zippel bounds, we may find a $K$-linear form $\ell$ of height at most $O_n(d^{C_K})$ which does not lie on $W'$. Since $\ell \notin W'$, for every  linear form $\ell'$ linearly independent of $\ell$, the intersection of the line passing through $\ell$ and $\ell'$ with $W'$ has at most $O_n(d^{C_K})$ points. It follows  there are at most $O_n(d^{C_K})$  values $t \in K$ for which $f_d|_{V(\ell+t\ell')}$ is not $r$-irreducible.
The claim now follows using
 \Cref{prop:preservation.H.cond}(2),(3).
\end{proof}

\begin{proof}[Proof of \Cref{thm:fd:K}]
Recall that we are given an irreducible $f\in \cO_K[x_1, \ldots, x_n]$ of degree $d\geq 3$ whose highest degree part is $2$-irreducible (or $1$-irreducible) over $K$ and we wish to bound the number of $\cO_K$-points on $X = V(f)$ of height at most $B$. As usual, we can assume that $f$ is absolutely irreducible. We follow the same proof strategy as for \Cref{thm:fd:Q}.

Let us first assume that $d> \kappa$, for some constant $\kappa = O_{K,n}(1)$. We use~\cite[Thm.\,5.14]{Pared-Sas} to find an auxiliary polynomial $g$ of degree at most
\[
\ll_{K,n} d^{e_1} B^{\frac{1}{d^{n-1}}}
\]
which vanishes on all integral points of $X$ of height at most $B$, where $e_1 = 4$ in characteristic $0$ and $e_1 = 7$ in positive characteristic. We then count integral points on all irreducible components of $X\cap V(g)$ as in the proof of \Cref{thm:fd:Q}. The components of degree $1$ contribute at most $O_{K,n}(d^{e}B^{n-2})$ if $f_d$ is $2$-irreducible, and at most $O_{K,n}(d^e B^{n-2}\log(B)^{O_K,n(1)})$ if $f_d$ is $1$-irreducible, where $e = 7$ in characteristic $0$ and $e = 11$ in positive characteristic. This larger exponent for positive characteristic comes from the fact that in \Cref{lem:number.of.n-2.planes} one uses Noether forms, which are of degree $O(d^6)$ in positive characteristic~\cite{KALTOFEN1995} rather than of degree $d^2$~\cite{RuppertCrelle}, see \Cref{thm:noether.forms}. For the components of higher degree, one reasons exactly as in the proof of ~\Cref{thm:fd:Q}.


Assume now that $d \leq \kappa = O_{K,n}(1)$. If $f_d$ is $1$-irreducible, or $f_d$ is $2$-irreducible and its absolutely irreducible factors can be defined over $K^{\mathrm{sep}}$, we follow the proof of \Cref{thm:fd:Q} by using induction,  \Cref{cor:preserve.H.cond.with.HIT} and the Schwartz--Zippel bounds.
If $f_d$ is $2$-irreducible and it has an absolutely irreducible factor which cannot be defined over $K^{\mathrm{sep}}$, the induction argument stays the same, where we now  use \Cref{thm:eff.Hil.lemma} to preserve $2$-irreducibility (together with  \Cref{prop:preservation.H.cond} to preserve NCC), instead of using \Cref{cor:preserve.H.cond.with.HIT} which is based on \Cref{thm:HIT.v2}.
We note that in the latter case (with $d \leq \kappa$) we may assume $\log H_K(f)\ll_{K,n}\log B$, since otherwise we may use arguments similar to
\cite[Theorem 4]{Heath-Brown-Ann} to get $N_{\rm aff}(f, B)\leq O(1) d^2 B^{n-2}=O(1)B^{n-2}$.
\end{proof}

\begin{thm}[Uniform dimension growth for projective varieties]\label{thm:dgcdegree:proj:K}
Let $K$ be a global field. Let $e = 7$ if $\charac K = 0$ and $e = 11$ if $\charac K > 0$. Given $n\geq 3$ an integer, there exist constants $c = c(n,K)$ and $\kappa=\kappa(n,K)$ such that for any integral hypersurface $X\subset \PP_K^n$, and for all $B\geq 2$ we have
\begin{align*}
N(X, B)&\leq cd^e B^{d_K(n-1)}, &\text{ if } d\geq 5, \\
N(X, B)&\leq c B^{d_K(n-1)} (\log B)^\kappa, &\text{ if } d = 4, \\
N(X, B)&\leq c B^{d_K(n-2 + 2/\sqrt{3})} (\log B)^\kappa, &\text{ if } d = 3.
\end{align*}
\end{thm}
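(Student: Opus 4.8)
The plan is to deduce \Cref{thm:dgcdegree:proj:K} from the affine statement \Cref{thm:fd:K} by passing to the affine cone, exactly as \Cref{thm:dgcdegree:proj} is deduced from \Cref{thm:fd:Q}. Write $X=V(F)$ with $F\in\cO_K[x_0,\ldots,x_n]$ homogeneous of degree $d$; since $X$ is integral, $F$ is irreducible over $K$, and as $d\ge 3$ this forces $F$ to have no factor of degree $\le 2$, so $F$ is $2$-irreducible over $K$. I would then form the affine cone $X_{\rm aff}=V(F)\subset\AA^{n+1}_K$, an integral affine hypersurface of degree $d$ in $n+1\ge 4$ variables whose homogeneous top-degree part is $F$ itself and hence $2$-irreducible. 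The height comparison recorded just after the definition of $N_{\rm aff}$ in \Cref{sec:global} (that is, \cite[Prop.\,2.2]{Pared-Sas}) gives that every $x\in\PP^n(K)$ with $H_K(x)\le B$ has a representative on $X_{\rm aff}$ inside $[O_K(1)B]^{n+1}_{\cO_K}$, so that $N(X,B)\le N_{\rm aff}(X_{\rm aff},O_K(1)B)$.

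When $X_{\rm aff}$ is NCC over $K$ it satisfies the hypotheses of \Cref{thm:fd:K} in $n+1$ variables, and I would simply apply that theorem with $n$ replaced by $n+1$ and $B$ by $O_K(1)B$, absorbing the constant $O_K(1)$ and the factor $(\log O_K(1)B)^{\kappa}\ll_K(\log B)^{\kappa}$ (for $B\ge 2$) into $c$ and $\kappa$. This gives $N_{\rm aff}(X_{\rm aff},O_K(1)B)\ll_{n,K} d^{e}B^{n-1}$ for $d\ge 5$, $\ll_{n,K} B^{n-1}(\log B)^{\kappa}$ for $d=4$, and $\ll_{n,K} B^{n-2+2\sqrt{3}}(\log B)^{\kappa}$ for $d=3$; since $d_K\ge 1$ we have $n-1\le d_K(n-1)$ and $n-2+2\sqrt{3}\le d_K(n-2+2\sqrt{3})$, so these are the claimed bounds. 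In positive characteristic the logarithm-free estimate for $d\ge 5$ will come from the large-degree part of the proof of \Cref{thm:fd:K}, which does not need the absolutely irreducible factors of $F$ to be $K^{\sepc}$-definable; in the bounded-degree range one either has that property or incurs an extra $(\log B)^{O_n(1)}$, which is harmless in the $d=3,4$ cases already carrying a logarithm.

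Next I would dispose of the case where $X_{\rm aff}$ is not NCC over $K$. Then $F=g(\ell_1,\ell_2)$ for $K$-linear forms $\ell_1,\ell_2$ and some $g\in K[y_1,y_2]$; because $F$ is homogeneous and irreducible of degree $d\ge 3$, the forms $\ell_1,\ell_2$ are linearly independent and $g$ is homogeneous and irreducible over $K$, hence has no $K$-rational zero on $\PP^1_K$. Consequently every $K$-point of $X_{\rm aff}$ lies on the linear subspace $L:=\{\ell_1=\ell_2=0\}\subset\AA^{n+1}_K$, which has dimension $n-1$, so $N(X,B)\le N_{\rm aff}(L,O_K(1)B)\ll_{n,K}B^{n-1}$, again comfortably inside all of the claimed bounds.

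The entire weight of the argument rests on \Cref{thm:fd:K}; the only steps that need care are this degenerate non-NCC case, where $X$ simply has too few rational points to matter, and the separability bookkeeping in characteristic $p$ discussed above, and I do not expect either to pose a genuine obstacle.
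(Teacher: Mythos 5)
Your reduction is exactly the paper's intended proof: the official argument for \Cref{thm:dgcdegree:proj:K} is simply ``follows from \Cref{thm:fd:K}'', i.e.\ the passage to the affine cone as in the proof of \Cref{thm:dgcdegree:proj}, and your height comparison via \cite[Prop.\,2.2]{Pared-Sas} together with the observation $n-1\le d_K(n-1)$ is the intended bookkeeping. Your treatment of the case where the cone fails to be NCC is correct and is in fact a necessary supplement that the paper leaves implicit: for homogeneous $F$ irreducible over $K$ of degree $d\ge 3$, failure of NCC forces $F=g(\ell_1,\ell_2)$ with $g$ an irreducible binary form of degree $d$ and $\ell_1,\ell_2$ independent, so all $K$-points lie on the codimension-two linear space $\{\ell_1=\ell_2=0\}$ and contribute $\ll_{n,K}B^{n-1}$, well within the stated bounds.

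The one loose step is your handling of the $K^{\sepc}$-definability hypothesis of \Cref{thm:fd:K} in positive characteristic. You assert that the log-free estimate for $d\ge 5$ comes from the ``large-degree part'' of the proof of \Cref{thm:fd:K}; but that regime is $d>\kappa$ for a constant $\kappa=O_{K,n}(1)$ which can exceed $5$, so for $5\le d\le\kappa$ your fallback (the $1$-irreducible case of \Cref{thm:fd:K}) only yields $B^{n-1}(\log B)^{O(1)}$, which is not absorbed into $B^{d_K(n-1)}$ when $d_K=1$, e.g.\ $K=\FF_q(t)$. This corner is non-vacuous: a form such as $x^p-ty^p-t^2z^p$ over $\FF_p(t)$ (with $p\ge 5$, padded with extra variables) is irreducible over $K$, NCC, and $2$-irreducible, yet its absolutely irreducible factor is not defined over $K^{\sepc}$, so the log-free case of \Cref{thm:fd:K} does not apply to its cone. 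You should be aware, however, that this defect is inherited from the paper itself: \Cref{thm:dgcdegree:proj:K} is stated without the separability hypothesis carried by \Cref{thm:fd:K}, and the paper's one-line proof does not address the discrepancy either; on the other points your write-up is more careful than the paper's.
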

\begin{proof}
Follows from \Cref{thm:fd:K}.
\end{proof}

Inspired by \cite{Salb.upcoming} and an email correspondence with Per Salberger, we now give a further variant to Theorem \ref{thm:fd:K}, where we change the conditions on $f_d$ to being $2$-irreducible  over $K$ (resp.~$1$-irreducible).

\begin{thm}\label{thm:final}
Let $K$ be a global field with ring of integers $\cO_K$. Let $e = 7$ if $\charac K = 0$ and $e = 11$ if $\charac K > 0$. Given $n\geq 3$ an integer, there exist constants $c = c(n)$ and $\kappa= \kappa(n)$ such that for all polynomials $f$ in $\cO_K[y_1, \ldots, y_n]$ of degree $d\geq 3$ such that $f$ is irreducible over $K$, and, whose homogeneous degree $d$ part $f_d$ contains at least one absolutely irreducible factor of degree $>2$, one has for all $B\geq 2$ that
\begin{align*}
N_{\rm aff}(f, B)&\leq cd^e B^{n-2}, &\text{ if } d\geq 5, \\
N_{\rm aff}(f, B)&\leq c B^{n-2} (\log B)^\kappa, &\text{ if } d = 4, \\
N_{\rm aff}(f, B)&\leq c B^{n-3 + 2/\sqrt{3}} (\log B)^\kappa, &\text{ if } d = 3.
\end{align*}
If $f_d$ only has at least one absolutely irreducible factor degree $>1$, then
\begin{align*}
N_{\rm aff}(f, B)&\leq cd^e B^{n-2} (\log B)^\kappa, &\text{ if } d \geq 4, \\
N_{\rm aff}(f, B)&\leq c B^{n-3 + 2/\sqrt{3}} (\log B)^\kappa, &\text{ if } d = 3.
\end{align*}
\end{thm}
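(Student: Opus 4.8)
The plan is to retrace the proof of \Cref{thm:fd:K} essentially verbatim, the only difference being that the hypothesis on $f_d$ is now purely geometric: it says exactly that $f_d$ is \emph{not} a product of absolutely irreducible forms of degree $\le 2$ (resp.\ $\le 1$). As preliminary reductions one may assume that $f$ is geometrically irreducible — if $f$ is irreducible over $K$ but not over $\overline K$, its $K$-points all lie on the intersection of two Galois conjugate hypersurfaces of degree $<d$, so $N_{\rm aff}(f,B)\ll_n d^{O(1)}B^{n-2}$ by the argument of \cite[Cor.\,4.1.4]{CCDN-dgc} — and, as in the final paragraph of the proof of \Cref{thm:fd:K}, that $\log H_K(f)\ll_{K,n}\log B$ (otherwise one concludes directly by the method of \cite[Thm.\,4]{Heath-Brown-Ann}). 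One then inducts on $n$, the base case being $n=3$.

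The only place where the weaker hypothesis must be fed in is the count of $\cO_K$-points on the lines ($n=3$) or the $(n-2)$-planes ($n\ge 4$) contained in $X=V(f)$, i.e.\ the global-field analogues of \Cref{prop1:Brow-Heath-Salb} and of \Cref{prop:count.on.n-2.planes}; recall that the proof of \Cref{prop1:Brow-Heath-Salb}, and the lemma behind \Cref{prop:count.on.n-2.planes} that every component of $X_\infty$ is $(n-3)$-ruled, both use that $f_d$ is $1$-irreducible. Write $X_\infty=V(f_d)=\bigcup_m Z_m$ for the decomposition into geometrically irreducible components. For the $Z_m$ of degree $\ge 3$ — among them the one supplied by the hypothesis — one argues exactly as before: summing the curve point counts of \cite{BinCluKat} over the rational points of $Z_m$, the geometric series over the heights converges and the contribution is $\ll_n d^{O(1)}B^{n-2}$, with no logarithmic loss. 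The real work, and what I expect to be the main obstacle, is controlling the lines (resp.\ $(n-2)$-planes) of $X$ whose point at infinity lies on a component $Z_m$ of degree $\le 2$, and in particular on a hyperplane component.

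The key claim is the following. \emph{Let $f$ be geometrically irreducible and NCC, and let $\ell\mid f_d$ be a (possibly $\overline K$-rational) linear form, so that $V(\ell)$ is a hyperplane component of $X_\infty$. If $f_d$ is not a product of linear forms, then $X$ contains at most $O(d^{O(1)})$ lines (resp.\ $(n-2)$-planes) parallel to $V(\ell)$; equivalently, for all but $O(d^{O(1)})$ scalars $c$ the slice $X\cap\{\ell=c\}$ contains no line (resp.\ no $(n-2)$-plane).} To prove it, observe that a line (resp.\ $(n-2)$-plane) of $X$ parallel to $V(\ell)$ lies in some slice $X_c=X\cap\{\ell=c\}$, a hypersurface of degree $\le d$ in an $(n-1)$-space; the condition ``$X_c$ contains such a subspace'' is a polynomial condition on $c$ of degree $O(d^{O(1)})$ (a discriminant built from the Noether forms of \Cref{thm:noether.forms}), so it either fails outside a set of $O(d^{O(1)})$ values of $c$, in which case the finitely many slices each contribute $\ll_n d^{O(1)}B^{n-2}$ by Schwartz--Zippel and we are done, or it holds for all $c$. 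In the latter case, spreading out over the generic point $\Spec K(\ell)$ of the pencil and forming the norm of a linear factor of $f$ along $V(\ell)$ shows that $f$ factors over $\overline{K(\ell)}$ into forms linear in the remaining coordinates; since $f$ is absolutely irreducible it is irreducible over $K(\ell)$ by Gauss's lemma, so this product is Galois-stable and descends, forcing $f$, hence $f_d$, to be a product of linear forms, contradicting the hypothesis. In positive characteristic the factorisation over $\overline{K(\ell)}$ must be handled with the inseparability bookkeeping of \Cref{prop:preservation.H.cond}. For a component $Z_m$ of degree exactly $2$ one argues similarly: either only $O(d^{O(1)})$ lines (resp.\ $(n-2)$-planes) of $X$ have their point at infinity on $Z_m$, or $X$ is ruled by a one-parameter family of them; in the weak setting this last possibility is harmless, costing only the usual factor $(\log B)^\kappa$ as in \Cref{prop1:Brow-Heath-Salb}, and in the strong setting it is where the stronger assumption ``$f_d$ has an absolutely irreducible factor of degree $>2$'' (rather than merely $>1$) is used to rule it out. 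Granting all this, the analogues of \Cref{prop1:Brow-Heath-Salb} and \Cref{prop:count.on.n-2.planes} hold with the stated bounds, the low-degree components of $X_\infty$ adding only $O(d^{O(1)}B^{n-2})$, resp.\ $O(d^{O(1)}B^{n-2}(\log B)^\kappa)$.

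Feeding these counts, together with the auxiliary polynomial of \cite[Thm.\,5.14]{Pared-Sas} and the global-field forms of \Cref{prop:pila,prop:aux.affine.poly}, into the arguments of \Cref{prop:0.4strong} and \Cref{prop:1} settles the base case $n=3$ and the regime $d>\kappa$ of the induction, for a constant $\kappa=O_{K,n}(1)$. For the remaining regime $d\le\kappa$ the induction step is actually lighter than in \Cref{thm:fd:K}: since ``$f_d$ has an absolutely irreducible factor of degree $>2$ (resp.\ $>1$)'' is a geometric condition, it survives a generic $K$-rational hyperplane section by Bertini alone — apply \Cref{lem:Bertini} to the component $V(g)$ of $X_\infty$ provided by the hypothesis to obtain a bad hypersurface of degree $O_n(d^{C_K})$ off which $g|_{V(\ell)}$, hence $f_d|_{V(\ell)}$, retains an absolutely irreducible factor of the same degree — so no effective Hilbert irreducibility theorem is needed here. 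Combining this with \Cref{prop:preservation.H.cond} to keep NCC, one picks by \Cref{prop:schwarz--zippel} linearly independent $K$-forms $\ell,\ell'$ of height $O_n(d^{C_K})$ and $t\in[O_n(d^{C_K})]_{\cO_K}$, and sums over the slices $\{f=0\}\cap\{\ell=t\ell'+b\}$: for all but $O_n(d^{C_K})$ values of $b$ the slice is geometrically irreducible (Bertini for the pencil), NCC, and still satisfies the $f_d$-condition, so the induction hypothesis in $n-1$ variables applies, while the remaining values are absorbed by \Cref{prop:schwarz--zippel}. This gives $N_{\rm aff}(f,B)\ll_{K,n} B\cdot d^{e}B^{n-3}+d^{O(1)}B^{n-2}\ll_{K,n}d^{e}B^{n-2}$ up to the advertised factor $(\log B)^\kappa$, with the larger exponent $e=11$ in characteristic $p$ coming, exactly as in \Cref{thm:fd:K}, from the degree $O(d^{6})$ of the Noether forms of \cite{KALTOFEN1995} used in the positive-characteristic analogue of \Cref{lem:number.of.n-2.planes}. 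The crux throughout is the claim of the previous paragraph: using absolute irreducibility together with NCC to rule out that a low-degree component of $X_\infty$ gives rise to a positive-dimensional family of linear subspaces on $X$.
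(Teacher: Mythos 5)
Your overall architecture agrees with the paper's: reduce the theorem to re-proving the plane-counting statements (\Cref{prop1:Brow-Heath-Salb} for $n=3$, \Cref{prop:count.on.n-2.planes} in general) under the weaker hypothesis on $f_d$, then run the proof of \Cref{thm:fd:K} essentially unchanged (and your remark that Bertini alone suffices in the small-$d$ induction, the new condition on $f_d$ being geometric, is fine). The gaps are in your key claim. First, for a hyperplane component $V(\ell)$ of $X_\infty$ your descent argument does not work: restriction of $f$ to the generic member of the special pencil $\{\ell=c\}$ need not remain irreducible (Bertini does not apply to this pencil, and Gauss's lemma gives nothing here), the norm of a single linear factor need not exhaust the slice polynomial, and the intermediate conclusion ``forcing $f$, hence $f_d$, to be a product of linear forms'' is false for $f$: the polynomial $f=y^2-xz^2$ is absolutely irreducible and NCC, every slice $x=c$ contains the lines $y=\pm\sqrt{c}\,z$, yet $f$ is not a product of linear forms (only the statement about $f_d$ survives, and it requires a different proof). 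Also, ``$X_c$ contains an $(n-2)$-plane'' is not a vanishing locus of Noether forms; those detect failure of absolute irreducibility, not the presence of a linear factor. Second, and more seriously, for a component $Z$ of degree $2$ in the strong case you assert, with no argument, that the hypothesis ``$f_d$ has an absolutely irreducible factor of degree $>2$'' rules out a one-parameter family of $(n-2)$-planes with part at infinity on $Z$; this is exactly the crux of the strong bound, and nothing you set up addresses it, since the parallel-pencil slicing only makes sense for hyperplane components.

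The paper closes both gaps at once with a single short geometric claim that you could substitute for yours: only finitely many $(n-2)$-planes on $X$ intersect a fixed component $Z$ of $X_\infty$ of degree $\le 2$ (resp.\ $=1$ in the weak case). Its proof: if there were infinitely many, they would rule the geometrically irreducible $X$, so through every point of the component $Y$ of degree $\ge 3$ (resp.\ $\ge 2$) provided by the hypothesis there would pass an $(n-2)$-plane contained in $\overline{X}$ and meeting $Z$; since $\deg Y\ge 3$ and $Y\ne Z$, this forces the whole hyperplane at infinity into $\overline{X}$, a contradiction. (Equivalently, taking parts at infinity of such a ruling shows $X_\infty$ would be covered by $Z$ together with finitely many linear spaces, so every irreducible factor of $f_d$ would have degree $\le\deg Z$, contradicting the hypothesis; this version also repairs your hyperplane-component claim.) With this finiteness, low-degree components of $X_\infty$ contribute only isolated planes, \Cref{prop:count.on.n-2.planes} and its $n=3$ analogue go through verbatim, and the remainder of your outline then matches the paper's proof.
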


\begin{proof}
We follow the proof of \Cref{thm:fd:K}, where the key point is that \Cref{prop:count.on.n-2.planes}, about counting on $(n-2)$-planes, still holds. 
Note that since $f_d$ has at least one non-linear absolutely irreducible factor, it is NCC. 
We now prove this proposition under these weaker assumptions on $f_d$.

As usual, we may assume that $f$ is absolutely irreducible. Let $X\subset \AA^n_K$ be the affine variety $V(f)$, and let $X_\infty\subset \PP^{n-1}_K$ be the intersection of $X$ with the hyperplane at infinity.
 By assumption there exists a  geometrically irreducible component $Y$ of $X_\infty$ of degree at least $3$. If $f_d$ contains an irreducible factor of degree $1$ or $2$, then it also has an absolutely irreducible factor of degree $1$ or $2$, let $Z\subset X_\infty$ be the corresponding geometrically irreducible component.
We claim that there are only finitely many $(n-2)$-planes on $X$ which intersect $Z$. Suppose towards a contradiction that this is not true. Since $X$ is geometrically irreducible, $X$ is the union of the infinitely many $(n-2)$-planes contained in it which intersect $Z$.
In particular, by considering the closure of $X$ in $\PP^n_K$ this holds for the points of $Y$. So through every point of $Y$ there is an $(n-2)$-plane contained in $X$ intersecting $Z$. But then because $\deg Y\geq 3$ and $Y\neq Z$, it would follow that the entire plane at infinity is contained in $X$, contradiction. Now, in the proof of \Cref{prop:count.on.n-2.planes} we no longer have to consider rulings of $X$ coming from components of $X_\infty$ of degree $1$ or $2$. We can then simply follow the proof of \Cref{prop:count.on.n-2.planes} in the same way to conclude.

If $f_d$ has at least one absolutely irreducible factor of degree at least $2$, the above proof goes through to show that we can disregard linear components of $X_\infty$.
\end{proof}

Note that Conjecture \ref{conj:uniform.curve.conjecture} and Theorem \ref{thm:fd:Q:conj} can be generalized correspondingly, namely, with the $1$-irreducibility being changed to $f_d$
having at least one absolutely irreducible factor of degree $>1$, while assuming $f$ to be irreducible over $K$. We leave their generalizations to a global field $K$ instead of $\QQ$ to the reader.

\bibliographystyle{amsalpha}
\bibliography{anbib}

\end{document}